\documentclass[12pt]{article}

\title{\bf \Large Minimax-optimal estimation for sparse multi-reference alignment with collision-free signals}
\author{ {Subhro Ghosh} \thanks{Authors are listed in the alphabetical order of their surnames.} 
\thanks{Department of Mathematics, National University of Singapore (\texttt{subhrowork@gmail.com})}
\and{Soumendu Sundar Mukherjee}  \footnotemark[1] \thanks{Statistics and Mathematics Unit, Indian Statistical Institute, Kolkata (\texttt{ssmukherjee@isical.ac.in})} 
\and{Jing Bin Pan}  \footnotemark[1] \thanks{Department of Mathematics, National University of Singapore, (\texttt{jingbin.pan1@gmail.com})} \thanks{Corresponding author: Jing Bin Pan (\texttt{jingbin.pan1@gmail.com})} }

\date{}

\usepackage[usenames,dvipsnames]{xcolor}
\usepackage{hyperref}
\hypersetup{
    colorlinks=true,
    linkcolor=cyan!80!black,
    citecolor=MidnightBlue,
    urlcolor=magenta,
}

\usepackage{titling}
\usepackage{adjustbox}
\usepackage{amsmath}
\usepackage{amssymb}
\usepackage{amsthm}
\usepackage{apacite}
\usepackage{bbm}
\usepackage{chngcntr}
\usepackage{enumitem}
\usepackage[e]{esvect}
\usepackage[margin=0.8in]{geometry}
\usepackage{graphicx}
\usepackage{mathrsfs} 
\usepackage{mathtools}
\usepackage[numbers]{natbib}
\usepackage{relsize}
\usepackage{scalerel,stackengine}
\usepackage{tikz-cd}
\usepackage{upgreek}
\usepackage{microtype}
\graphicspath{} 

\makeatletter
\renewcommand\@biblabel[1]{[#1]}%

\renewcommand\NAT@bibsetnum[1]{%
   \setlength{\labelwidth}{1em}%
   \setlength{\labelsep}{2\labelsep}%
   \setlength{\leftmargin}{\labelwidth}
   \addtolength{\leftmargin}{\labelsep}%
   \setlength{\itemsep}{\bibsep}\setlength{\parsep}{\z@}%
   \ifNAT@openbib
     \addtolength{\leftmargin}{\bibindent}%
     \setlength{\itemindent}{-\bibindent}%
     \setlength{\listparindent}{\itemindent}%
     \setlength{\parsep}{0pt}%
   \fi
}

\newcommand*{\numbibliography}{%
  \let\@bibsetup\NAT@bibsetnum
  \let\@biblabel\NAT@biblabelnum
  \bibliography}
\makeatother

\setlength{\parindent}{0em}
\setlength{\parskip}{1em}
\setlist[enumerate,1]{label={(\roman*)}}
\def\[#1\]{\begin{align*}#1\end{align*}}

\newtheoremstyle{mystyle}%                % Name
  {}%                                     % Space above
  {}%                                     % Space below
  {\upshape}%                                     % Body font
  {}%                                     % Indent amount
  {\bfseries}%                            % Theorem head font
  {.}%                                    % Punctuation after theorem head
  { }%                                    % Space after theorem head, ' ', or \newline
  {\thmname{#1}\thmnumber{ #2}\thmnote{ (#3)}}%                                     % Theorem head spec (can be left empty, meaning `normal')

\DeclarePairedDelimiterX{\infdivx}[2]{(}{)}{%
  #1\;\delimsize\|\;#2%
}
\newcommand{\infdiv}{D_\text{KL}\infdivx}

\newcommand{\norm}[1]{\left\lVert#1\right\rVert}
\newcommand{\normnorm}[1]{\lVert#1\rVert}

\newcommand{\opnorm}[1]{\left\lVert#1\right\rVert_\text{op}}
\newcommand{\itbf}[1]{\textbf{\textit{#1}}}
\newcommand{\supp}{\text{Supp}}

\newcommand{\R}{\mathbb{R}}

\newcommand{\Z}{\mathbb{Z}}
\newcommand{\E}{\mathbb{E}}

\newcommand{\bs}{\boldsymbol}

\newcommand{\mc}[1]{\mathcal{#1}}

\newcommand{\RN}[1]{%
  \textup{\uppercase\expandafter{\romannumeral#1}}%
}
\newcommand\reallywidehat[1]{\arraycolsep=0pt\relax%
\begin{array}{c}
\stretchto{
  \scaleto{
    \scalerel*[\widthof{\ensuremath{#1}}]{\kern-.5pt\bigwedge\kern-.5pt}
    {\rule[-\textheight/2]{1ex}{\textheight}} %WIDTH-LIMITED BIG WEDGE
  }{\textheight} % 
}{0.5ex}\\           % THIS SQUEEZES THE WEDGE TO 0.5ex HEIGHT
#1\\                 % THIS STACKS THE WEDGE ATOP THE ARGUMENT
\rule{-1ex}{0ex}
\end{array}
}

\theoremstyle{mystyle}
\newtheorem{theorem}{Theorem}[subsection]

\newtheorem{lemma}[theorem]{Lemma}
\newtheorem{definition}[theorem]{Definition}

\newtheorem{proposition}[theorem]{Proposition}

\newtheorem{fact}[theorem]{Fact}

\newenvironment{enum}
  {\begin{enumerate}[label = (\roman*)]}
  {\end{enumerate}}

\begin{document}

\maketitle

\begin{abstract} The Multi-Reference Alignment (MRA)  problem aims at the recovery of an unknown signal from repeated observations under the latent action of a group of cyclic isometries, in the presence of additive noise of high intensity $\sigma$. It is a more tractable version of the celebrated cryo EM model. In the crucial high noise regime, it is known that its sample complexity scales as $\sigma^6$. Recent investigations have shown that for the practically significant setting of sparse signals, the sample complexity of the maximum likelihood estimator asymptotically scales with the noise level as $\sigma^4$. 
In this work, we investigate minimax optimality for signal estimation under the MRA model for so-called \textit{collision-free} signals. In particular, this signal class covers the setting of generic signals of \textit{dilute sparsity} (wherein the support size $s=O(L^{1/3})$, where $L$ is the ambient dimension.  

We demonstrate that the minimax optimal rate of estimation in for the sparse MRA problem in this setting is $\sigma^2/\sqrt{n}$, where $n$ is the sample size. In particular, this widely generalizes the sample complexity asymptotics for the restricted MLE in this setting, establishing it as the statistically optimal estimator.     
%Our results also establish the minimax optimal dependence of the rate on the parameters of the model -- namely, sparsity and ambient dimension, and unveil an interesting O(1) dependence on both in this setting. 
Finally, we demonstrate a concentration inequality for the restricted MLE on its deviations from the ground truth.
\end{abstract}

% We analyse the risk 
% of the maximum likelihood estimator (MLE) in the $\ell_2$-orbit 
% distance and show that for collision-free signals, this risk is $\sigma^2/\sqrt{n}$ 
% to the first order, where $n$ is the sample size and $\sigma$ is the noise strength. An accompanying minimax 
% lower bound establishes that the MLE is rate-optimal in terms of both $\sigma$ and $n$. Furthermore, 
% we also provide an exponential concentration inequality for the MLE.

%\begin{abstract} We consider signal estimation under the multi-reference 
%alignment (MRA) model for the class of collision-free signals. We analyse the risk 
%of the maximum likelihood estimator (MLE) in the $\ell_2$-orbit 
%distance and show that for collision-free signals, this risk is $\sigma^2/\sqrt{n}$ 
%to the first order, where $n$ is the sample size and $\sigma$ is the noise strength. An accompanying minimax 
%lower bound establishes that the MLE is rate-optimal in terms of both $\sigma$ and $n$. Furthermore, 
%we also provide an exponential concentration inequality for the MLE.
%\end{abstract}

\newpage
\tableofcontents

\pagebreak

\section{Introduction}\label{Introduction}

\subsection{Introduction}\label{IntroductionIntroduction}

Multi-reference alignment (MRA) is the problem of estimating an unknown signal from noisy 
samples that are subject to latent rotational transformations. In recent years, the MRA model has found 
practical applications in many scientific fields, such as structural biology \cite{StructuralBiology1} \cite{StructuralBiology2} 
\cite{StructuralBiology3} \cite{StructuralBiology4} \cite{StructuralBiology5}\cite{StructuralBiology6}, image recognition 
\cite{ImageRecognition1} \cite{ImageRecognition2} \cite{ImageRecognition3} \cite{ImageRecognition4}, robotics \cite{Robotics} and 
signal processing \cite{SignalProcessing1} \cite{SignalProcessing2}, and has garnered significant research interest as a result. 
Most notably, the MRA model serves as a simplified model for the three-dimensional reconstruction problem in cryo-electron microscopy (cryo-EM) 
\cite{CryoEM1} \cite{Generic} \cite{CryoEM2}.

In this paper, we study the estimation problem in the MRA model \cite{MRA1}\cite{Generic}\cite{MRA2} for collision-free signals. 
Identify the vector space $\R^L$ with the space of functions $\Z/L\Z \to \R$. Let 
\[\mc{R} := \{ R^{(\ell)} \ : \ 1\leq \ell \leq L\}\]
denote the cyclic group of coordinate shifts, where $R^{(\ell)} : \R^L \to \R^L$ is the linear operator 
given by $(R^{(\ell)}\theta)(i) = \theta(i + \ell)$. In the general MRA estimation problem, 
the objective is to recover an unknown parameter $\theta\in \R^L$, known in the literature as a \itbf{signal}, 
from $n$ independent noisy observations $X_1,X_2,\cdots,X_n$ given by
\begin{align}\label{IntroductionIntroduction1}
X_i = G_i \theta + \sigma\xi_i, \ \ \ \ \ \ \ i \in \{1,\cdots,n\},\end{align}
where the $G_i$'s are drawn from $\mc{R}$ independently and uniformly at random and 
each $\xi_i \sim \mc{N}(\bs 0, \bs{I}_d)$ is i.i.d standard Gaussian noise that is independent of the $R_i$'s. We
remark that while the canonical distribution on $\mc{R}$ is uniform, other distributions have also been considered \cite{OtherDistribution}.

In this model, any vector $\theta$ is statistically indistinguishable from its cyclic shifts $R^{(1)}\theta,\cdots,R^{(L-1)}\theta$. 
Consequently, the parameter $\theta$ is only identifiable up to the action of $\mc{R}$. Hence we study the estimation problem 
under the rotation-invariant distance
\[\rho(\theta,\phi) := \min_{G \in \mc{R}} \norm{\theta - G\phi}.\]
Here, $\norm{\cdot}$ refers to the usual Euclidean norm. 

Both the dimension $L$ and the noise level $\sigma$ are assumed to be known. 
In many practical applications such as cryo-EM, the noise level is very high but can be 
lowered over time by technological improvements \cite{Improvement}. As such, understanding the 
relationship between the difficulty of the estimation problem and the quantity $\norm{\theta}/\sigma$, 
known as the \itbf{signal-to-noise
ratio} is of fundamental importance. With that in mind, we focus our work on understanding the 
asymptotic sampling complexity in the MRA model as $\sigma \to +\infty$. 

The MRA problem, particularly in the low-noise regime, 
has been mostly attacked using the synchronization approach \cite{MRA1}.
However, it was recently recognised as a Gaussian mixture model \cite{BRW}, which enabled the use of various methods such as 
the method of moments \cite{MethodOfMoments} \cite{Generic} and expectation-maximization \cite{ExpectationMaximization}.
For a more detailed discussion on the likelihood landscape of such 
models, we refer the reader to \cite{LikelihoodLandscape1} \cite{LikelihoodLandscape2} \cite{LikelihoodLandscape3} 
\cite{LikelihoodLandscape4}.

\subsection{Estimation rates for MRA and sparsity}\label{IntroductionCollision}

In the two seminal papers by Bandeira, Rigollet and Weed \cite{BRW} \cite{Generic} in 2017, it was shown that the general MRA model in the high 
noise regime has 
a sampling complexity of $O(\sigma^L)$ in the worst case, and $O(\sigma^6)$ in the 
case of signals having full Fourier support. This has motivated investigations into 
variations of the MRA model where a better sampling complexity may be achieved. One such variation 
is the MRA estimation problem for certain classes of sparse signals, where the restricted maximum likelihood estimator (abbrv. MLE) has been shown to exhibit an improved sampling complexity of $O(\sigma^4)$\cite{Sparse}. We refer the reader to \cite{Sparse1} and \cite{Sparse2} for an investigation into algorithmic aspects of sparse MRA and the role of sparsity in the related problem of cryo EM.

While it is of interest to understand the the sample complexity asymptotics for the restricted MLE, it only shades light on the performance of a certain type of estimators. In order to have a complete understanding of the problem, one needs to understand minimax optimal rates of estimation for the model. In other words, the central question is, what is the best possible behaviour over candidate estimators for the worst possible error over a sufficiently rich class of signals. Minimax optimality in statistical problems has a long and rich history; for a detailed account we refer the interested reader to \cite{Tsybakov}. 
In the context of the MRA problem, minimax optimal rate of estimation for the general MRA problem is known to be $\sigma^3/\sqrt{n}$ (c.f. \cite{BRW,Zhou-Zhou2022}). However, minimax optimality for the sparse MRA problem is not understood, and in view of different sample complexity asymptotics, is not covered by the minimax theory on general signal spaces. This is the broad direction that we propose to explore in the work.

\subsection{The space of collision free signals}

In this work, we will focus our attention on a particular class of sparse signals 
called collision-free signals.
For a vector $\theta\in \R^L$, let $\mc{D}(\theta)$ denote the multiset 
of differences
\[\mc{D}(\theta) := \big\{ i - j \ (\text{mod } L) \ : \ i,j\in \text{supp}(\theta) \text{ with } i \neq j\big\}.\]
We say that $\theta$ is \itbf{collision-free} if each element in $\mc{D}(\theta)$ 
appears with multiplicity exactly $1$. 

The notion of collision-freeness is motivated by the beltway problem in combinatorial optimization. 
The beltway problem consists of recovering a set $\mc{S}$ of integers from 
their pairwise differences $\mc{D}_{\mc{S}}$, up to the trivial symmetry of translating 
all the numbers in the set by the same amount. In 
1939, Piccard \cite{Piccard} conjectured that if two sets 
of integers $\mc{S}_1$ and $\mc{S}_2$ have the same set of pairwise differences $\mc{D}$, 
and the pairwise differences are known to be unique (i.e. $\mc{S}_1$ and $\mc{S}_2$ 
are collision-free), then the sets $\mc{S}_1$ and $\mc{S}_2$ must be translates of each other.

A counterexample to Piccard's conjecture was found by Bloom \cite{Bloom} in 1975 for $|\mc{S}| = 6$.
In 2007, a complete resolution was brought about by 
Bekir and Golomb \cite{Golomb}, who showed that Piccard's conjecture holds for $|\mc{S}| \geq 7$. In our current context, 
this means that the support of a collision-free signal $\theta$ can be recovered from $\mc{D}(\theta)$ up to translation as long as $|\supp(\theta)| \geq 7$. This fact will 
play a crucial role in allowing us to obtain improved bounds on the sampling complexity.

With the above discussion in mind, we will assume that 
the unknown parameter $\theta$ satisfies:
\begin{enum}
\item $\theta$ is collision free;
\item $\sigma \geq \norm{\theta}$;
\item There exist positive constants $m,M,\epsilon\in \R_{>0}$ 
such that $m \leq |\theta(i)| \leq M$ for all $i\in\supp(\theta)$ and 
$s := |\supp(\theta)| \geq \max\{7, (2+\epsilon)M^2/m^2\}$.
\end{enum}
Let $\mc{T}$ denote the set of signals satisfying the above three conditions. In our analysis, we will only keep track of the dependence on $\sigma$ and $s$. 
The other quantities $L, m, M$ and $\epsilon$ are fixed throughout the paper 

Collision free signals arise as a natural concept in the setting of sparsity, especially in the context of sparse MRA. It is straightforward to envisage that the sparser a signal, the more likely it is for its support to be a collision free subset of $\Z/L \Z$. Indeed, a randomly chosen subset of $\Z/L \Z$ (with expected size $s$) is going to be collision free with high probability as long as $s=O(L^{1/3})$ (c.f \cite{Sparse} Appendix D). The latter corresponds to the regime of so-called \textit{dilute sparsity}, per the terminology used in the investigations in \cite{Sparse}, and has a natural place in the study of sparse MRA.

\subsection{Main Results}\label{IntroductionMain}

For a signal $\theta$, let $P_\theta$ denote the distribution of a random variable $X$ satisfying
\[X = G\theta  + \sigma\xi,\]
where $G$ is drawn from $\mc{R}$ uniformly and $\xi_i \sim \mc{N}(\bs 0, \bs{I}_d)$. 
Let $f_\theta : \R^d \to \R$ denote the density function of $P_\theta$. Explicitly,
\[f_\theta(x) &= \frac{1}{\sigma^L (2\pi)^{L/2}} \mathbb{E}_G\bigg[ 
\exp\bigg(- \frac{1}{2\sigma^2}\norm{x - G\theta}^2\bigg)\bigg].
\]
Given $n$ i.i.d samples $X_1,\cdots,X_n$ as in (\ref{IntroductionIntroduction1}), we analyse the performance of the restricted maximum 
likelihood estimator (MLE) $\tilde{\theta}_n$ given by
\[\tilde{\theta}_n := \underset{\phi\in \mc{T}}{\text{arg max}} 
\sum_{i=1}^n \log \mathbb{E}_G\bigg[\exp \Big( - \frac{1}{2\sigma^2} \norm{X_i - G\phi}^2\Big)\bigg].\]
Our main results are the following two theorems, which characterises the rate of estimation in the collision-free MRA model.
\begin{theorem}\label{IntroductionMain1}
We have that
\begin{equation}\label{IntroductionMain2}
\inf_{\hat{\theta}_n} \sup_{\theta\in \mc{T}} \mathbb{E}\big[\rho(\hat{\theta}_n,\theta)\big] \asymp\frac{\sigma^2}{\sqrt{n}},
\end{equation}
where the infimum is taken over all estimators $\hat{\theta}_n$ based on $n$ samples $X_1,\cdots,X_n$.
\end{theorem}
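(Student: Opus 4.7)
The asserted two-sided rate $\sigma^2/\sqrt{n}$ splits into an upper bound (achievability via the restricted MLE $\tilde\theta_n$) and a matching lower bound (minimax unimprovability). Both directions reduce to a single analytic comparison:
\[
H^2(P_\theta, P_\phi) \asymp \frac{\rho(\theta,\phi)^2}{\sigma^4}, \qquad \theta, \phi \in \mc{T},
\]
where $H(\cdot,\cdot)$ denotes the Hellinger distance. This is where the collision-free hypothesis enters decisively: it upgrades the generic MRA comparison $H^2 \asymp \rho^6/\sigma^6$ (which would yield the classical rate $\sigma^3/\sqrt{n}$) to one with second-moment rather than third-moment scaling.

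\textbf{Upper bound.} I would follow the standard route for maximum-likelihood rates. The density class $\{f_\phi : \phi \in \mc{T}\}$ is indexed by a finite-dimensional parameter (a support of size at most $s$ and the $s$ nonzero values bounded in $[-M,-m] \cup [m,M]$), and the likelihoods are smooth Gaussian mixtures. A bracketing-entropy bound, combined with a Wong--Shen (or Van de Geer) type argument for MLEs on smooth parametric classes, yields $\mathbb{E}\bigl[H^2(P_{\tilde\theta_n}, P_\theta)\bigr] \lesssim 1/n$, uniformly in $\theta \in \mc{T}$. Invoking the lower Hellinger-to-parameter bound $\rho^2 \lesssim \sigma^4 H^2$, Cauchy--Schwarz then yields $\mathbb{E}[\rho(\tilde\theta_n, \theta)] \leq \sqrt{\mathbb{E}[\rho^2]} \lesssim \sigma^2/\sqrt{n}$, which is the desired achievability.

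\textbf{Lower bound.} I would invoke Le Cam's two-point method. Fix $\delta = c\sigma^2/\sqrt{n}$ for a small constant $c > 0$, and construct $\theta_0, \theta_1 \in \mc{T}$ with $\rho(\theta_0, \theta_1) = 2\delta$ by perturbing a single nonzero coordinate of $\theta_0$; the perturbation preserves the supports, hence preserves collision-freeness (a purely combinatorial property of $\supp(\theta)$) and keeps $\theta_1$ in $\mc{T}$. The upper Hellinger-to-parameter comparison yields $H^2(P_{\theta_0}, P_{\theta_1}) \lesssim \delta^2/\sigma^4 = c^2/n$, and the comparability of Hellinger and KL in this smooth Gaussian-mixture setting (say, via a log-density Taylor expansion) upgrades this to a bound on the $n$-sample KL divergence of the form $\mathrm{KL}\bigl(P_{\theta_0}^{\otimes n} \,\|\, P_{\theta_1}^{\otimes n}\bigr) \leq C c^2$. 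Choosing $c$ small, Pinsker's inequality gives $\mathrm{TV}(P_{\theta_0}^{\otimes n}, P_{\theta_1}^{\otimes n}) \leq 1/2$, and Le Cam's inequality then delivers $\inf_{\hat\theta_n} \sup_{\theta \in \mc{T}} \mathbb{E}[\rho(\hat\theta_n, \theta)] \gtrsim \delta \asymp \sigma^2/\sqrt{n}$.

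\textbf{Main obstacle.} The entire argument rests on the two-sided Hellinger comparison. The upper direction $H^2 \lesssim \rho^2/\sigma^4$ is the easier half: a Taylor expansion of $\log f_\theta$ in inverse powers of $\sigma$, together with the averaging over $\mc{R}$, forces cancellation at first-moment order and leaves a leading contribution at the level of the autocorrelation $C_\theta(\ell) := \sum_i \theta(i)\theta(i+\ell)$. The lower direction $H^2 \gtrsim \rho^2/\sigma^4$ --- the precise ingredient that makes the $\sigma^2/\sqrt{n}$ rate possible --- is the deep part, and I would expect it to absorb the bulk of the technical effort. It requires a \emph{quantitative}, bi-Lipschitz version of the Bekir--Golomb resolution of Piccard's conjecture: the map $\theta \mapsto C_\theta$ must not merely determine $\theta \in \mc{T}$ up to cyclic shift, but do so stably, in the sense that $\min_{G \in \mc{R}} \|C_\theta - C_{G\phi}\|_2 \gtrsim \rho(\theta,\phi)$ uniformly over $\mc{T}$. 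The structural assumptions $s \geq 7$ (Bekir--Golomb threshold), collision-freeness, and the balance $s \geq (2+\epsilon) M^2/m^2$ should enter essentially at this step, the last condition presumably ensuring a quantitative separation margin that prevents the stability constant from degenerating.
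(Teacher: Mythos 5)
Your high-level plan is right---prove achievability via the restricted MLE and unimprovability via Le Cam's two-point bound, with the $\sigma^2/\sqrt{n}$ rate traced to a second-moment (rather than third-moment) comparison between parameter distance and statistical distance---and this matches the paper's strategy. However, both halves of your argument rest on claims that fail as stated, and in both cases the paper does something materially different to circumvent the failure.

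\textbf{The global two-sided comparison $H^2\asymp\rho^2/\sigma^4$ is false.} The direction you need for achievability, $H^2(P_\theta,P_\phi)\gtrsim \rho(\theta,\phi)^2/\sigma^4$ uniformly over $\theta,\phi\in\mc{T}$, does not hold. Take any mean-zero $\theta\in\mc{T}$ with a sign change and let $\phi=-\theta$; then $\phi\in\mc{T}$ as well (every defining condition is sign-symmetric), $\rho(\theta,\phi)>0$ generically, yet $\Delta_1(\theta,\phi)=\Delta_2(\theta,\phi)=0$, so both $\infdiv{P_\theta}{P_\phi}$ and $H^2$ scale like $\rho^2/\sigma^6$, not $\rho^2/\sigma^4$. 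This is exactly why the paper carries two bounds: a \emph{local} $\sigma^{-4}$ bound (Proposition \ref{PreliminariesCollision3}, valid only for $\rho<\epsilon_0$) driven by the second moment, and a \emph{global} $\sigma^{-6}$ bound (Proposition \ref{PreliminariesGlobal1}) driven by the third moment. The MLE proof (Theorem \ref{IntroductionMain4}) then splits into a local term controlled by $\sigma^{-4}$ and a global localization term controlled by $\sigma^{-6}$, which is the source of the extra $\sigma^{11}/n$ contribution in \eqref{IntroductionMain5}. Your bracketing-entropy/Wong--Shen route could in principle replace the paper's Taylor-expansion argument (which follows \cite[Theorem 4]{BRW}), but only if you first perform the same localization using the weaker global bound; as written, plugging a global $\sigma^{-4}$ Hellinger-to-parameter inequality into an entropy bound is using a lemma that isn't true.

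\textbf{The one-coordinate perturbation in the lower bound breaks the first moment.} If you perturb a single nonzero coordinate of $\theta_0$ by $2\delta$, then $\overline{\theta_1}-\overline{\theta_0}=2\delta/L\neq 0$, so $\Delta_1(\theta_0,\theta_1)\neq 0$. By Theorem \ref{PreliminariesKullback1} this contributes $\tfrac{1}{2\sigma^2}\normnorm{\Delta_1}^2\asymp\delta^2/(L\sigma^2)$ to $\infdiv{P_{\theta_0}}{P_{\theta_1}}$, which for large $\sigma$ dominates the $\delta^2/\sigma^4$ scaling you are aiming for. Requiring $n\cdot\delta^2/(L\sigma^2)\lesssim 1$ only lets you push $\delta$ up to order $\sigma/\sqrt{n}$, yielding the trivial non-MRA rate rather than $\sigma^2/\sqrt{n}$. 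The paper's construction (Section \ref{Lower}) avoids this by perturbing \emph{two} coordinates by $\mp\delta$ so that $\theta_n$ remains mean-zero, which kills $\Delta_1$, and then applies Theorem \ref{PreliminariesKullback2} with $k=2$ to get $\infdiv{P_\phi}{P_{\theta_n}}\lesssim\rho^2/\sigma^4$. Your construction needs the same mean-preserving modification (or some other way to annihilate $\Delta_1$) before the $\sigma^{-4}$ KL bound applies.

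One last small point: the ``deep part'' you flag---a bi-Lipschitz stability version of Bekir--Golomb---is indeed the analytic heart of the argument, but the paper imports it essentially wholesale from \cite{Sparse} (Lemma \ref{PreliminariesCollision1} and Proposition \ref{PreliminariesCollision2}) and only supplies the new third-moment global lower bound (Lemma \ref{PreliminariesGlobal2} and Proposition \ref{PreliminariesGlobal1}) on top; the balance condition $s\geq(2+\epsilon)M^2/m^2$ enters through that imported quantitative autocorrelation separation exactly as you surmise.
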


\begin{theorem}\label{IntroductionMain3} Fix a constant $\delta\in (0, 2\sqrt{L}M)$. There exist constants $C_s,\tilde{C}_s \in \R_{>0}$ 
depending on $s$ such that for all $n \geq C_s \delta^{-4} \sigma^{12}$ and 
for all $\theta\in \mc{T}$, the restricted MLE $\tilde{\theta}_n$ satisfies
\[P(\rho(\tilde{\theta}_n,\theta) \geq \delta) \leq C_s \sigma^{5s}\delta^{-2s} \exp\bigg(-\frac{\tilde{C}_s n\delta^4}{\sigma^{12}}\bigg).\]
\end{theorem}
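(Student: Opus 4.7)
The approach is a classical covering / union-bound argument for the restricted MLE, structured around three ingredients: (a) a quantitative information-theoretic separation of $P_\theta$ from $P_\phi$ specific to collision-free signals, (b) a sub-exponential tail bound on the empirical log-likelihood ratio at each fixed $\phi$, and (c) a covering of $\mc{T}$ of controlled entropy. Introduce $Z_i(\phi) := \log f_\phi(X_i) - \log f_\theta(X_i)$, so that $\E_\theta[Z_i(\phi)] = -\infdiv{P_\theta}{P_\phi}$. By the defining optimality of $\tilde\theta_n$, on the event $\{\rho(\tilde\theta_n,\theta) \geq \delta\}$ there exists $\phi \in \mc{T}$ with $\rho(\phi,\theta) \geq \delta$ and $\sum_{i=1}^n Z_i(\phi) \geq 0$, and the plan is to bound the probability of this global event.

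The crux is a quantitative lower bound of the form
\[
\infdiv{P_\theta}{P_\phi} \;\gtrsim_s\; \frac{\rho(\phi,\theta)^4}{\sigma^{12}} \qquad \text{for all } \theta,\phi \in \mc{T},
\]
or an equivalent bound on the squared Hellinger distance. For general MRA the analogous inequality exhibits a sixth-moment bottleneck; for collision-free signals the Bekir--Golomb resolution of Piccard's conjecture, applicable since $s \geq 7$ by definition of $\mc{T}$, asserts that $\mc{D}(\theta)$ determines $\supp(\theta)$ up to cyclic translation, and combined with amplitude information this yields a strictly stronger identifiability. Translating this into a quantitative inequality via a Taylor expansion of $\log(f_\phi/f_\theta)$ in powers of $\sigma^{-2}$, identifying the leading non-vanishing moment contrast between $\phi$ and $\theta$, and bounding it below in $\rho(\phi,\theta)$ with careful tracking of the constants $m, M, \epsilon, s$, is the heart of the matter; I expect this step to be the main technical obstacle.

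Given such a lower bound, a Bernstein-type argument applied to the i.i.d.\ centered variables $Z_i(\phi) - \E_\theta[Z_i(\phi)]$ produces, for each fixed $\phi$,
\[
P_\theta\!\bigg(\sum_{i=1}^n Z_i(\phi) \;\geq\; -\tfrac{1}{2}n\,\infdiv{P_\theta}{P_\phi}\bigg) \;\leq\; \exp\!\bigg(-c_s\,\frac{n\,\rho(\phi,\theta)^4}{\sigma^{12}}\bigg),
\]
where control of the Orlicz norm / variance proxy entering Bernstein is obtained by writing $Z_i(\phi)$ as the logarithm of a convex combination of Gaussian likelihood ratios and bounding the resulting moments in terms of $\rho(\phi,\theta)/\sigma$ using the explicit Gaussian tail of $X_i$ under $P_\theta$.

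To pass from a fixed $\phi$ to a supremum over $\{\phi \in \mc{T} : \rho(\phi,\theta) \geq \delta\}$, I would cover $\mc{T}$ by first enumerating the $\binom{L}{s}$ possible supports in $\Z/L\Z$ and, on each, placing an $\eta$-net on the amplitude vector in the compact set $[-M,M]^s \setminus (-m,m)^s$. At scale $\eta \asymp \delta^2/\sigma^5$---dictated by requiring that the Lipschitz approximation error of $\phi \mapsto Z_i(\phi)$, under a quadratic-in-$X$ envelope, be dominated by the half-margin $\tfrac{1}{2}\infdiv{P_\theta}{P_\phi}$---the covering has cardinality of order $\sigma^{5s}\delta^{-2s}$, with $s$-dependent constants absorbed into $C_s$. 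A union bound over this net combined with a dyadic peeling over shells $\rho(\phi,\theta) \in [2^k\delta, 2^{k+1}\delta)$ then yields the claimed inequality, with the sample-size hypothesis $n \geq C_s \delta^{-4}\sigma^{12}$ being exactly what ensures that the exponential factor dominates the logarithm of the covering number across all relevant shells.
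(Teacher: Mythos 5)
Your high-level architecture --- a KL separation inequality specific to collision-free signals, a per-$\phi$ tail bound on the empirical log-likelihood ratio, a covering of the sparse signal class, and a union bound with peeling --- is the same as the paper's, and you correctly recover the covering scale $\eta\asymp\delta^2/\sigma^5$ and cardinality $\asymp\sigma^{5s}\delta^{-2s}$. However, the quantitative lemma you place at the ``heart of the matter'' is stated with the wrong power counting, and this is not a cosmetic issue.

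You propose $\infdiv{P_\theta}{P_\phi}\gtrsim_s \rho(\theta,\phi)^4/\sigma^{12}$. What the paper actually establishes, and what makes the argument run, is the \emph{quadratic} global bound $\infdiv{P_\theta}{P_\phi}\geq C_s\,\sigma^{-6}\rho(\theta,\phi)^2$ (Proposition~\ref{PreliminariesGlobal1}). This is derived by showing that the third moment difference tensor $\Delta_3(\theta,\phi)$ is nonvanishing for collision-free signals in distinct orbits (Lemma~\ref{PreliminariesGlobal2}, via Bekir--Golomb), combined with the BRW-type lower bound $\infdiv{P_\theta}{P_\phi}\gtrsim\|\Delta_3\|^2/\sigma^6$, and $\|\Delta_3\|\gtrsim\rho$ on the relevant compact set. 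The standard moment-tensor Taylor expansion gives KL $\asymp\|\Delta_m\|^2/\sigma^{2m}$, which is quadratic (not quartic) in the signal discrepancy; guessing $\rho^4/\sigma^{12}$ misreads where the $\delta^4/\sigma^{12}$ in the theorem comes from. In the paper, that exponent appears because the concentration threshold is set at $t\asymp D(\phi)\asymp\rho^2/\sigma^6$ and then enters the tail bound \emph{quadratically} (via the sub-Gaussian/chi-square exponent $\asymp n\,t^2$), producing $n\rho^4/\sigma^{12}$; the KL itself is never quartic. Your own covering calculation silently relies on the quadratic bound: setting the net scale by $\eta/\sigma\lesssim\tfrac12 D(\phi)$ with $D(\phi)\asymp\delta^2/\sigma^6$ gives $\eta\asymp\delta^2/\sigma^5$ and $|N_\eta|\asymp\sigma^{5s}\delta^{-2s}$, exactly as you wrote; if $D(\phi)$ were really $\asymp\delta^4/\sigma^{12}$, the same bookkeeping would force a finer net of cardinality $\asymp\sigma^{11s}\delta^{-4s}$, which contradicts the prefactor you derived and the theorem's statement. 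So your proposal is internally inconsistent between the claimed lemma and the covering step.

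One further structural difference worth flagging: you propose a direct Bernstein bound on $\sum_i Z_i(\phi)$, which requires controlling the variance/Orlicz norm of the per-sample log-likelihood ratio. The paper instead splits $R_n(\phi)=\RN1-\RN2(\phi)+\text{const}$, where $\RN1$ is a $\phi$-independent chi-square term (giving the sub-exponential tail $\exp(-cn\min\{t,t^2\})$ responsible for the $\sigma^{-12}$ factor) and $\RN2(\phi)$ is $\|\phi\|/\sigma$-Lipschitz in the Gaussian noise (giving a clean sub-Gaussian tail uniformly over the net). This decomposition sidesteps the moment bookkeeping you say is needed. In its present form your proposal has a genuine gap: the central quantitative inequality is asserted in a form that is neither what holds nor what your own covering analysis presupposes, and the heavy lifting (your acknowledged ``main technical obstacle'') is left undone.
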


Theorem \ref{IntroductionMain1} is a direct consequence of the following 
two results, which we state below for independent interest.

The first result is a uniform upper bound for the restricted MLE of $O(\sigma^4)$, which is a marked improvement over 
the pointwise upper bound in \cite[Theorem 4]{Sparse}.
\begin{theorem}\label{IntroductionMain4}
There exists positive constants $C$ and $C_s$, where $C_s$ depends on $s$, such that the restricted MLE $\tilde{\theta}_n$ satisfies
\begin{equation}\label{IntroductionMain5}
\mathbb{E}\big[\rho(\tilde{\theta}_n,\theta)\big] \leq C \cdot \frac{\sigma^2}{\sqrt{n}} + C\cdot \frac{\sigma^3 \log n }{sn}
+ C_s\cdot \frac{\sigma^{11}}{n}
\end{equation}
uniformly over all choices $\theta\in \mc{T}$ for all $\sigma$ sufficiently large.
\end{theorem}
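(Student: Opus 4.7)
The plan is to analyse the restricted MLE via the classical Hellinger-distance route, with the key new ingredient being a local quadratic lower bound on the Hellinger distance that exploits the collision-free structure to yield the improved scaling $\sigma^{-4}$ (rather than the generic $\sigma^{-6}$). First, I would fix a truncation threshold $\delta_0$ and split
$$\mathbb{E}[\rho(\tilde\theta_n,\theta)] = \mathbb{E}\big[\rho\cdot \mathbf{1}_{\{\rho \leq \delta_0\}}\big] + \mathbb{E}\big[\rho\cdot \mathbf{1}_{\{\rho > \delta_0\}}\big].$$
The second, non-local piece is handled by integrating the concentration inequality of Theorem~\ref{IntroductionMain3}, which I am free to invoke: the stretched-exponential decay $P(\rho > t) \leq C_s\sigma^{5s}t^{-2s}\exp(-\tilde{C}_s n t^4/\sigma^{12})$, after choosing $\delta_0$ suitably and computing the tail integral $\int_{\delta_0}^{\infty} P(\rho > t)\,dt$, contributes the $C_s \sigma^{11}/n$ term.

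The bulk work lies in the local piece, where I would establish the uniform lower bound
$$H^2(f_\theta, f_\phi) \;\geq\; \frac{c}{\sigma^4}\,\rho(\theta,\phi)^2 \;-\; O\!\left(\frac{\rho^3}{\sigma^6}\right)$$
for $\rho(\theta,\phi)$ sufficiently small, valid uniformly over $\theta,\phi \in \mc{T}$. The high-noise expansion
$$f_\theta(x) = g_\sigma(x)\, e^{-\|\theta\|^2/(2\sigma^2)}\sum_{k \geq 0} \frac{1}{k!\,\sigma^{2k}}\cdot \frac{1}{L}\sum_{\ell=0}^{L-1} \big(x^T R^{(\ell)}\theta\big)^k,$$
with $g_\sigma$ the centred isotropic Gaussian density of variance $\sigma^2 I$, lets me compare densities by comparing the $\mc{R}$-invariant moment functionals $M_k(\theta) := \frac{1}{L}\sum_\ell (R^{(\ell)}\theta)^{\otimes k}$. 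The algebraic crux is that for collision-free signals with $s \geq 7$, $M_2(\theta)$ (equivalently, the circular autocorrelation of $\theta$) separates $\mc{R}$-orbits in $\mc{T}$ up to the sign flip $\theta \leftrightarrow -\theta$: Piccard's theorem identifies $\supp(\theta)$ from its multiset of differences, and the collision-free property pins down each nonzero autocorrelation entry as a single product $\theta(i)\theta(j)$, recovering $\theta$ up to sign. The residual sign ambiguity is excluded locally by the linear contribution $M_1(\theta)$ (the signal mean) and globally by Theorem~\ref{IntroductionMain3}.

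With the local lower bound in hand, standard empirical-process machinery of Wong--Shen type (bracketing-entropy estimates with covering numbers of $\mc{T}$ scaling as $(1/\varepsilon)^{O(s)}$ thanks to the sparsity constraint) furnishes the uniform MLE Hellinger bound $\mathbb{E}[H^2(f_{\tilde\theta_n},f_\theta)] \lesssim 1/n$. Feeding this into the local quadratic lower bound produces $\mathbb{E}[\rho^2\cdot \mathbf{1}_{\{\rho \leq \delta_0\}}] \lesssim \sigma^4/n$, and Cauchy--Schwarz then yields the dominant term $C\sigma^2/\sqrt{n}$. The middle term $C\sigma^3\log n/(sn)$ enters as a combined correction from the cubic Taylor remainder $O(\rho^3/\sigma^6)$ and the logarithmic bracketing factor, renormalised by the sparsity $s$. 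The principal obstacle is precisely the uniform local Hellinger lower bound: turning Piccard's qualitative recovery into a quantitative stability estimate of the form $\|M_2(\theta) - M_2(\phi)\|_F \gtrsim \rho(\theta,\phi)$ uniformly over $\mc{T}$, and carefully dominating the cubic remainder by the quadratic lead throughout the truncation region $\{\rho \leq \delta_0\}$.
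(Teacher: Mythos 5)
Your route is genuinely different from the paper's. The paper does not go through Hellinger distance or bracketing entropy at all: it works directly with the KL divergence $D(\phi)=\infdiv{P_\theta}{P_\phi}$, Taylor-expands $D-D_n$ around $\theta$, uses Bartlett's identities to show $\mathbb{E}[\nabla D_n(\theta)\nabla D_n(\theta)^T]=H/n$ so that $\mathbb{E}[\norm{\nabla D_n(\theta)}_H^*]\le\sqrt{s/n}$, controls the Hessian fluctuation $\sup_\phi\opnorm{H_D(\phi)-H_{D_n}(\phi)}^2$ via \cite[Lemma~B.6]{BRW}, and feeds the global KL bound $\infdiv{P_\theta}{P_\phi}\ge C_s\sigma^{-6}\rho^2$ into \cite[Lemma~B.7]{BRW} to get $\mathbb{E}[\rho^2]\lesssim\sigma^{10}/n$. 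The local collision-free bound $\infdiv{P_\theta}{P_\phi}\ge C\sigma^{-4}s\,\rho^2$ (Proposition~\ref{PreliminariesCollision3}) plays the role your local Hellinger bound would play. Your high-noise Hermite/moment-tensor expansion and the Piccard/autocorrelation observation are exactly the right algebraic core, and they are shared with the paper; the difference is entirely in the analytic wrapper.

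That said, there are real gaps in the wrapper you propose. First, the leading term: Wong--Shen-type bracketing bounds generically give $\mathbb{E}[H^2(f_{\tilde\theta_n},f_\theta)]\lesssim s\log n/n$, not $\lesssim 1/n$; pushed through the local quadratic lower bound and Cauchy--Schwarz this yields $\sigma^2\sqrt{\log n}/\sqrt{n}$, which is strictly weaker than the claimed $C\sigma^2/\sqrt{n}$. The paper avoids this logarithm precisely because Bartlett's identity makes the score term exactly $\sqrt{s/n}$ with no entropy integral in sight; this is the step you would need to replace, and bracketing does not obviously do it. Second, your attribution of the middle $C\sigma^3\log n/(sn)$ term to ``cubic Taylor remainder plus bracketing log'' is not a derivation: in the paper this term comes cleanly from $\frac{\sigma^7}{s}\mathbb{E}[\sup_\phi\opnorm{H_D-H_{D_n}}^2]\lesssim\frac{\sigma^3\log n}{sn}$ via Lemma~\ref{Uniform1}, a Hessian-fluctuation bound that has no analogue in your outline. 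Third, the $C_s\sigma^{11}/n$ term in the paper arises as $\frac{\sigma}{s}\mathbb{E}[\rho^2]\lesssim\frac{\sigma^{11}}{sn}$ via Lemma~\ref{Uniform2} with $k=3$, not by integrating the tail of Theorem~\ref{IntroductionMain3}. If you instead integrate $P(\rho>t)\le C_s\sigma^{5s}t^{-2s}\exp(-\tilde C_s nt^4/\sigma^{12})$, the polynomial prefactor $\sigma^{5s}t^{-2s}$ is enormous for $t$ near the threshold $\delta_0\asymp(\sigma^{12}/n)^{1/4}$ and you would need a separate argument to show the integral is $O(\sigma^{11}/n)$; it is not obvious, and the bound only holds once $n\ge C_s\delta_0^{-4}\sigma^{12}$, which constrains the splitting point. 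In short: right algebraic insight, but the empirical-process scaffolding does not deliver the stated three-term bound without substantial additional work at exactly the places the paper's Bartlett/Hessian argument is doing the heavy lifting.
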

The second result is a lower bound on the minimax rate of estimation in the 
sparse MRA model. In particular, it shows that the restricted MLE 
achieves the optimal sampling complexity of $O(\sigma^4)$.
\begin{theorem}\label{IntroductionMain6}
Fix a sparsity $s\in\Z_{\geq 1}$. Let $\mc{T}_s$ denote the set of vectors $\theta \in \mc{T}$ satisfying $|\supp(\theta)| = s$.
For any $\sigma \geq \max_{\theta\in \mc{T}_s}\norm{\theta}$, 
there exists a universal constant $M$ such that
\[\inf_{\tilde{\theta}_n}\sup_{\theta\in\mc{T}_s} \mathbb{E}_\theta\big[\rho(\tilde{\theta}_n,\theta)\big] \geq M\min\bigg\{\frac{\sigma^2}{\sqrt{n}},\ 1 \bigg\},\]
where the infimum is taken over all estimators $\tilde{\theta}_n$ on $n$ samples from $P_\theta$.
\end{theorem}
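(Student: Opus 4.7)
We use Le Cam's two-point method (c.f.~Chapter~2 of~\cite{Tsybakov}); specifically, we will exhibit $\theta_0,\theta_1\in\mc{T}_s$ with $\rho(\theta_0,\theta_1)=\delta$ and
\[\infdiv{P_{\theta_0}^{\otimes n}}{P_{\theta_1}^{\otimes n}}\leq C_s n\delta^2/\sigma^4.\]
Setting $\delta = \min\{c_s\sigma^2/\sqrt n,\delta_0\}$ for small enough constants $c_s$ and $\delta_0$ then makes the total-variation distance at most $1/2$, and Le Cam's lemma gives a minimax lower bound of $\delta/4\asymp\min\{\sigma^2/\sqrt n,\,1\}$, matching the statement.

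\textbf{Construction of the two hypotheses.} Fix any $\theta_0\in\mc{T}_s$ whose coordinate magnitudes lie strictly inside $(m,M)$ (admissible since the class is nontrivial). Pick two distinct indices $i_0,i_1\in\supp(\theta_0)$ and put
\[\theta_1 := \theta_0+h,\qquad h := \tfrac{\delta}{\sqrt{2}}\,(e_{i_0}-e_{i_1}),\]
so that $\|h\|=\delta$, $\supp(\theta_1)=\supp(\theta_0)$, and, crucially, $\sum_i h(i)=0$. The threshold $\delta_0$ above is chosen small enough (depending on $\theta_0$) to ensure that $|\theta_1(i)|\in[m,M]$ for all $i\in\supp(\theta_1)$ and that $\rho(\theta_0,\theta_1)=\|h\|=\delta$; this last requires $\delta<\tfrac{1}{2}\min_{G\neq I}\|\theta_0-G\theta_0\|$. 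Since the support is preserved, $\theta_1$ remains collision-free.

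\textbf{The KL estimate.} The technical heart is
\[\infdiv{P_{\theta_0}}{P_{\theta_1}}\leq C_s\frac{\delta^2}{\sigma^4}.\]
Starting from $f_\theta(x)=\phi(x)\,e^{-\|\theta\|^2/(2\sigma^2)}\,\mathbb{E}_G[\exp(\langle x,G\theta\rangle/\sigma^2)]$, with $\phi$ the $\mc{N}(\bs 0,\sigma^2\bs I)$-density, one performs a high-noise expansion of the log-likelihood ratio (or equivalently of the chi-squared divergence) in powers of $\sigma^{-2}$, in the style of~\cite{BRW}. The coefficients are differences of $\mc R$-invariant polynomials in $\theta_0$ and $\theta_1$: the degree-one invariant (the mean $\bar\theta = L^{-1}\sum_i\theta(i)$) governs the $\sigma^{-2}$-term, the degree-two invariants (the autocorrelations $R_\theta(d)=L^{-1}\sum_\ell\theta(\ell)\theta(\ell+d)$) govern the $\sigma^{-4}$-term, and so on. The zero-sum constraint $\sum_i h(i)=0$ built into our construction forces $\bar{\theta_0}=\bar{\theta_1}$, eliminating the $\sigma^{-2}$-contribution entirely. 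The surviving $\sigma^{-4}$-contribution depends on the difference $R_{\theta_1}-R_{\theta_0}$, which is linear in $h$ to leading order and enters the KL quadratically, giving the $O_s(\delta^2/\sigma^4)$ bound. Uniformly controlling the remainder of this expansion over $\mc{T}_s$ in the high-noise regime is the principal technical obstacle; the analysis parallels that in~\cite{BRW,Generic}, but the collision-free property yields the sharper $\sigma^{-4}$ rate (versus $\sigma^{-6}$ in the generic case) precisely because the autocorrelation already identifies the signal up to cyclic shift (via the Bekir--Golomb resolution of Piccard's conjecture for $s\geq 7$), so no cancellation at degree two can occur for non-orbit-equivalent perturbations.

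\textbf{Conclusion via Le Cam.} By tensorization of the KL and Pinsker's inequality, $\mathrm{TV}(P_{\theta_0}^{\otimes n},P_{\theta_1}^{\otimes n})\leq (C_s n\delta^2/(2\sigma^4))^{1/2}$. Choosing $\delta=\min\{c_s\sigma^2/\sqrt n,\,\delta_0\}$ with $c_s$ small enough that this TV is at most $1/2$, Le Cam's two-point lemma delivers
\[\inf_{\tilde\theta_n}\sup_{\theta\in\mc{T}_s}\mathbb{E}_\theta[\rho(\tilde\theta_n,\theta)]\geq \tfrac{\delta}{4}\asymp\min\Bigl\{\frac{\sigma^2}{\sqrt n},\,1\Bigr\},\]
completing the proof.
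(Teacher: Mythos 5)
Your approach is essentially the paper's: Le Cam's two-point argument on two nearby collision-free signals chosen so that $\Delta_1$ vanishes, with the KL bound of order $\rho^2/\sigma^4$ doing the work. The paper builds two explicit mean-zero signals (alternating $\pm 1/\sqrt s$ on a collision-free support, perturbed by $\pm\delta$ on two of the support coordinates) and invokes Theorem~\ref{PreliminariesKullback2} with $k=2$; your $\theta_1=\theta_0+h$ with $\sum_i h(i)=0$ achieves the same effect once one applies Theorem~\ref{PreliminariesKullback1} to pass to the centered signals (equal means imply $\Delta_1(\theta_0,\theta_1)=0$, so centering does not change the KL).

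The genuine gap is that you present the KL estimate $\infdiv{P_{\theta_0}}{P_{\theta_1}}\le C_s\delta^2/\sigma^4$ as an unresolved ``principal technical obstacle'' requiring uniform control of the remainder of a high-noise expansion. No new analysis of that kind is needed: once the first-moment difference vanishes, Theorem~\ref{PreliminariesKullback2} (BRW Theorem~9) with $k=2$ already gives $\infdiv{P_{\theta_0}}{P_{\theta_1}}\le \overline{C}\,\normnorm{\theta_0}^2\rho(\theta_0,\theta_1)^2/\sigma^4$ whenever $3\rho(\theta_0,\theta_1)\le\normnorm{\theta_0}\le\sigma$, which your choice of small $\delta$ arranges. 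Citing that result turns your sketch into a complete proof. Also, your closing remark attributing the $\sigma^{-4}$ (rather than $\sigma^{-6}$) rate to collision-freeness concerns why the matching \emph{upper} bound of Theorem~\ref{IntroductionMain4} is achievable, not the lower-bound step you are proving: the KL upper bound $\lesssim\rho^2/\sigma^4$ holds for any pair of mean-matched signals and does not use collision-freeness; collision-freeness only enters to guarantee $\theta_0,\theta_1\in\mc{T}_s$ and to make the resulting $\sigma^2/\sqrt n$ rate tight.
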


In section \ref{Preliminaries}, we will introduce the key ingredients that will be used to derive our main results. 
Sections \ref{Uniform}, \ref{Lower} and \ref{Concentration} are dedicated to the proofs 
of theorem \ref{IntroductionMain4}, \ref{IntroductionMain6} and \ref{IntroductionMain3} respectively.

\section{Preliminaries}\label{Preliminaries}

In this section, we introduce and prove some properties about the Kullback-Leibler 
divergence and the moment tensors. Both are key tools that play 
a central role in our analysis.

\subsection{Kullback-Leibler Divergence and Moment Tensors}\label{PreliminariesKullback}

In the MRA model, the Kullback-Leibler (KL) divergence admits an explicit formula:
\[\infdiv{P_\theta}{P_\phi} = \frac{1}{2\sigma^2} (\norm{\phi}^2 - \norm{\theta}^2) 
+ \mathbb{E}_{\xi} \bigg[\log \frac{\mathbb{E}_G[\exp(\frac{1}{\sigma^2}(\theta + \sigma\xi)^T G\theta)]}{\mathbb{E}_G[
\exp(\frac{1}{\sigma^2}(\theta + \sigma\xi)^T G\phi)]}\bigg]. \]
An important piece of machinery to analyse the KL divergence is the family of moment tensors. For any positive integer $m$, 
the \itbf{mth moment tensor} of a vector $\theta\in \R^L$ is the quantity $\mathbb{E}[(G\theta)^{\otimes m}]\in (\R^L)^{\otimes m}.$ 
If $\phi\in \R^L$ is another vector, the \itbf{mth moment difference tensor} between $\theta$ and $\phi$ is defined to be
\[\Delta_m(\theta,\phi) := \mathbb{E}[(G\theta)^{\otimes m} - (G\phi)^{\otimes m}] \in (\R^L)^{\otimes m}.\]
The relationship between the KL divergence and the moment tensors are given by the following results.
\begin{theorem}\label{PreliminariesKullback1}\cite[Lemma 8]{BRW}
Let $\theta,\phi\in \R^L$. 
Let $\upvartheta = \theta - \mathbb{E}[G\theta]$ and $\varphi = \phi - \mathbb{E}[G\phi]$. Then
\[\infdiv{P_\theta}{P_\phi} = \infdiv{P_\upvartheta}{P_\varphi} + \frac{1}{2\sigma^2}\norm{\Delta_1(\theta,\phi)}^2.\]
\end{theorem}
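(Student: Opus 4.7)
The plan is to exploit the fact that for the cyclic shift action on $\R^L$, the averaged vector $\mathbb{E}[G\theta]$ is the constant vector whose entries all equal the coordinate-mean $L^{-1}\sum_i \theta(i)$; in particular every constant vector is fixed by every element of $\mc{R}$. Setting $c_\theta := \mathbb{E}[G\theta]$ and $c_\phi := \mathbb{E}[G\phi]$, the vector $\Delta_1(\theta,\phi) = c_\theta - c_\phi$ is itself constant. The key structural observation is that $Gc_\theta = c_\theta$ for every $G \in \mc{R}$, so $G\theta = G\upvartheta + c_\theta$, which gives the distributional identity that $P_\theta$ is the translate of $P_\upvartheta$ by $c_\theta$. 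At the level of densities this reads $f_\theta(x) = f_\upvartheta(x - c_\theta)$, and analogously $f_\phi(x) = f_\varphi(x - c_\phi)$.

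Next I would change variables in the KL integral by setting $Y := X - c_\theta$; since $X \sim P_\theta$ forces $Y \sim P_\upvartheta$, direct substitution yields
\[\infdiv{P_\theta}{P_\phi} = \mathbb{E}_{Y \sim P_\upvartheta}\!\big[\log f_\upvartheta(Y) - \log f_\varphi(Y + \Delta_1(\theta,\phi))\big].\]
Subtracting $\infdiv{P_\upvartheta}{P_\varphi} = \mathbb{E}_{Y \sim P_\upvartheta}[\log f_\upvartheta(Y) - \log f_\varphi(Y)]$, it suffices to establish
\[\mathbb{E}_{Y \sim P_\upvartheta}\!\big[\log f_\varphi(Y) - \log f_\varphi(Y + \Delta_1(\theta,\phi))\big] = \tfrac{1}{2\sigma^2}\norm{\Delta_1(\theta,\phi)}^2.\]

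The main calculation expands $\norm{y + \Delta_1 - G\varphi}^2 = \norm{y - G\varphi}^2 + 2\langle y, \Delta_1\rangle - 2\langle G\varphi, \Delta_1\rangle + \norm{\Delta_1}^2$, after which two algebraic identities conspire in our favour. First, since $\Delta_1$ is a constant vector and hence fixed by every cyclic shift, $\langle G\varphi, \Delta_1\rangle = \langle \varphi, G^{-1}\Delta_1\rangle = \langle \varphi, \Delta_1\rangle$; second, $\langle \varphi, \Delta_1\rangle = 0$, because $\varphi$ has zero coordinate-mean by construction while $\Delta_1$ is constant. The cross term thus loses its $G$-dependence, factors out of the Gaussian mixture defining $f_\varphi$, and delivers
\[f_\varphi(y + \Delta_1) = f_\varphi(y)\,\exp\!\big(\!-\tfrac{1}{\sigma^2}\langle y, \Delta_1\rangle - \tfrac{1}{2\sigma^2}\norm{\Delta_1}^2\big).\]
Taking logarithms and averaging over $Y \sim P_\upvartheta$, the linear-in-$Y$ term vanishes because $\mathbb{E}_{Y \sim P_\upvartheta}[Y] = \mathbb{E}[G\upvartheta] = 0$, which is the defining property of $\upvartheta$. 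What remains is precisely $\tfrac{1}{2\sigma^2}\norm{\Delta_1(\theta,\phi)}^2$, as required.

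The main obstacle here is essentially bookkeeping: one must keep careful track of which vectors are rotation-invariant (namely $c_\theta$, $c_\phi$, and $\Delta_1$) and which have zero coordinate-mean (namely $\upvartheta$ and $\varphi$). Once those two orthogonality relations are explicitly noted, no integration beyond elementary Gaussian algebra is required, and the claimed decomposition falls out on a single line.
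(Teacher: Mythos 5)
Your proof is correct. The paper cites this identity from BRW (their Lemma 8) without reproducing a proof, and your argument is the natural one: exploit the fact that $\mathbb{E}[G\theta]$ and $\mathbb{E}[G\phi]$ are constant vectors fixed by every cyclic shift, so that $P_\theta$ and $P_\phi$ are rigid translates of $P_\upvartheta$ and $P_\varphi$; the two orthogonality facts you isolate --- $\langle G\varphi, \Delta_1\rangle = \langle \varphi, \Delta_1\rangle = 0$ because $\Delta_1$ is shift-invariant while $\varphi$ has zero coordinate-mean, and $\mathbb{E}_{Y\sim P_\upvartheta}[Y]=0$ --- are exactly what make the cross term factor out of the Gaussian mixture and then vanish under expectation, leaving the quadratic correction $\tfrac{1}{2\sigma^2}\normnorm{\Delta_1(\theta,\phi)}^2$.
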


\begin{theorem}\label{PreliminariesKullback2}\cite[Theorem 9]{BRW}
Let $\theta,\phi\in \R^L$ satisfy $3\rho(\theta,\phi)\leq \norm{\theta} \leq \sigma$ and $\mathbb{E}[G\theta] = \mathbb{E}[G\phi] = 0$. 
For any positive integer $k$, there exist universal constants $\underline{C}$ and $\overline{C}$ such that
\[\underline{C}\sum_{m=1}^\infty 
\frac{\norm{\Delta_m(\theta,\phi)}^2}{(\sqrt{3}\sigma)^{2m} m!} \leq \infdiv{P_\theta}{P_\phi} \leq 
2\sum_{m=1}^{k-1} \frac{\norm{\Delta_m(\theta,\phi)}^2}{\sigma^{2m}m!} + \overline{C} \frac{\norm{\theta}^{2k-2} \rho(\theta,\phi)^2}{\sigma^{2k}}.\]
\end{theorem}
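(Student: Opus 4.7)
The first step is to reduce to the case where the group element achieving $\rho(\theta,\phi)$ has already been applied, so that $\norm{\theta-\phi}=\rho(\theta,\phi)$; this is legitimate since replacing $\phi$ by $G^*\phi$ leaves both $P_\phi$ and every $\Delta_m(\theta,\phi)$ invariant under the uniform-$\mc{R}$ group action. Using the centering hypothesis and the fact that $\norm{G\theta}=\norm{\theta}$, I would then factor
\[f_\theta(x) = \phi_\sigma(x)\,e^{-\norm{\theta}^2/(2\sigma^2)}\,g_\theta(x),\qquad g_\theta(x) := \mathbb{E}_G\!\left[\exp\!\left(\langle x, G\theta\rangle/\sigma^2\right)\right],\]
where $\phi_\sigma$ is the $N(0,\sigma^2 I_L)$ density, and expand $g_\theta(x) = \sum_{m\geq 0}\langle x^{\otimes m},\mathbb{E}_G[(G\theta)^{\otimes m}]\rangle/(m!\sigma^{2m})$. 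This is the channel through which moment tensors enter the KL divergence.

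For the upper bound, my plan is to invoke the standard chi-squared dominance $\infdiv{P_\theta}{P_\phi} \leq \chi^2(P_\theta,P_\phi)$ and compute the chi-square by writing $f_\theta-f_\phi$ as a power series with coefficients $\Delta_m(\theta,\phi)/(m!\sigma^{2m})$ times $\phi_\sigma(x)$. Orthogonality of multivariate (probabilists') Hermite polynomials against the Gaussian weight $\phi_\sigma$ collapses the cross-terms and produces the clean series $\sum_m \norm{\Delta_m(\theta,\phi)}^2/(m!\sigma^{2m})$. Truncating at order $k-1$ leaves a tail which I would bound via the telescoping identity
\[a^{\otimes m}-b^{\otimes m}=\sum_{j=0}^{m-1} a^{\otimes j}\otimes(a-b)\otimes b^{\otimes(m-1-j)},\]
which together with $\norm{G\theta}=\norm{\theta}$ and the triangle inequality gives $\norm{\Delta_m(\theta,\phi)}\leq m\,\max(\norm{\theta},\norm{\phi})^{m-1}\rho(\theta,\phi)$. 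Summing $\sum_{m\geq k} m^2\norm{\theta}^{2m-2}\rho(\theta,\phi)^2/(m!\sigma^{2m})$ under $\norm{\theta}\leq \sigma$ produces a convergent geometric-type series whose value is dominated by its leading term $\norm{\theta}^{2k-2}\rho(\theta,\phi)^2/\sigma^{2k}$, as required.

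The main obstacle is the lower bound, where a naive Pinsker or Hellinger inequality only yields the $m=1$ term rather than the full series. My plan is to use the Donsker--Varadhan variational formula
\[\infdiv{P_\theta}{P_\phi} = \sup_{h}\Big\{\mathbb{E}_{P_\theta}[h] - \log\mathbb{E}_{P_\phi}[e^{h}]\Big\},\]
with $h$ taken to be a weighted linear combination of multivariate Hermite polynomials of degrees $1,\ldots,k$ evaluated at $x/\sigma$, with coefficients tuned against $\Delta_m(\theta,\phi)$. The constant $\sqrt{3}$ should emerge from bounding the log-moment-generating function $\log\mathbb{E}_{P_\phi}[e^{h}]$ via a quadratic surrogate whose variance proxy is controlled by $\mathbb{E}_{P_\phi}[e^{2h}]$-type quantities; unwinding this through the Gaussian integral produces a variance-inflated Gaussian, effectively replacing $\sigma^2$ by $3\sigma^2$ in the denominators of the series. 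The delicate step is verifying that cross-order terms in the variational expansion vanish or are non-negative, which I expect to follow from Hermite-tensor orthogonality under $\phi_\sigma$ combined with the uniform distribution on $\mc{R}$ decoupling moments of distinct orders.
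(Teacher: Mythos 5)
Your upper bound is essentially the paper's argument: pass to $\chi^2$, expand in moment tensors via the Hermite structure, and bound the tail. (The paper bounds the tail through a binomial-type estimate on $\normnorm{\theta^{\otimes m}-\phi^{\otimes m}}^2$ rather than your telescoping identity, but both give the geometric decay needed.) The lower bound, however, has a genuine gap.

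Your plan to plug a degree-$k$ polynomial test function $h$ into Donsker--Varadhan does not go through: for a polynomial $h$ of degree $\geq 3$, the log-moment generating function $\log \E_{P_\phi}[e^h]$ is typically $+\infty$ (the Gaussian tail cannot absorb a cubic or higher term in the exponent), so the variational quantity is either trivial or ill-defined for the very test functions you need. This is not a cosmetic difficulty; it forces a different variational ingredient. The paper's lower bound uses a weaker but polynomial-friendly separation lemma: if a test function $F$ separates $\E_{P_1}[F]$ and $\E_{P_2}[F]$ by $\mu$ and has variance at most $v$ under both laws, then $\infdiv{P_1}{P_2}\geq \mu^2/(4v+\mu^2)$. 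The proof is elementary (Jensen against the convex $x\mapsto x\log x-(x-1)^2/(2(x+1))$, then Cauchy--Schwarz) and never touches an exponential moment, which is exactly what makes polynomial test functions admissible. With $T_k(x)=\sum_{m=1}^k\langle\Delta_m(\theta,\phi),H_m(x)\rangle/((\sqrt{3}\sigma)^{2m}m!)$, the mean separation is precisely $\delta=\sum_{m=1}^k\normnorm{\Delta_m}^2/((\sqrt{3}\sigma)^{2m}m!)$, and the content reduces to a variance bound.

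The second thing your sketch gets wrong is the provenance of the $\sqrt{3}$. It is not a ``variance-inflated Gaussian''; it is the $(2,4)$-hypercontractivity threshold of the Ornstein--Uhlenbeck semigroup: $\normnorm{U_\rho f}_4\leq\normnorm{f}_2$ holds for $\rho\leq 1/\sqrt{3}$. One bounds $\E[T_k(Y)^2]$ by Cauchy--Schwarz against the centered Gaussian, producing an $\E[T_k(Z)^4]^{1/2}$ factor; since $T_k=U_{1/\sqrt{3}}\widetilde{T}_k$ for the un-damped polynomial $\widetilde{T}_k$, hypercontractivity converts the fourth moment into a second moment, and Hermite orthogonality then computes it exactly. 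The damping factor $(\sqrt{3})^{-2m}$ in the denominators of $T_k$ is put there precisely so that this step closes. Without identifying hypercontractivity as the mechanism, you cannot recover the claimed constants, and your expectation that ``cross-order terms vanish or are non-negative'' from orthogonality alone does not address the real problem, which is controlling the variance of a high-degree polynomial under a non-centered Gaussian mixture.
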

A few remarks are in order. Firstly, the assumption that $\mathbb{E}[G\theta] = \mathbb{E}[G\phi] = 0$ means that the first moment tensor $\Delta_1(\theta,\phi)$ vanishes. Thus the summations in the above theorem start from $m = 2$. Secondly, the result still holds if the hypothesis $3\rho(\theta,\phi) \leq \norm{\theta}$ is 
replaced by $\rho(\theta,\phi) \leq K_0 \norm{\theta}$ for any fixed positive constant $K_0\in \R_{>0}$. 
In our current setting, we let $K_0 = 3$. A full proof of this statement can be found in Appendix \ref{AppendixC}.

\subsection{Collision-Free Signals}\label{PreliminariesCollision}

As a consequence of the resolution of the Piccard's conjecture, 
collision-free signals are very well-behaved locally. This notion can be formalised in terms of the following lower bounds 
on the KL-divergence and moment tensors.

\begin{lemma}\label{PreliminariesCollision1}
\cite[Lemma 5]{Sparse}
For any pair of collision free signals $\theta,\phi\in \mc{T}$, we have
\[\norm{\Delta_2(\theta,\phi)}
\geq \sqrt{\frac{2\epsilon}{2 + \epsilon}} \cdot \frac{1}{\sqrt{L}} \cdot \sqrt{s} \cdot \rho(\theta,\phi).\]
\end{lemma}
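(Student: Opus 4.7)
The plan is to express $\|\Delta_2(\theta,\phi)\|^2$ explicitly via autocorrelations, exploit collision-freeness to collapse most of the terms, and extract the claimed multiplicative constant from the sparsity hypothesis $s \geq (2+\epsilon)M^2/m^2$. To begin, I would replace $\phi$ by its optimal cyclic shift so that $\rho(\theta,\phi) = \|\theta - \phi\|$, and observe that $M_2(\theta) := \mathbb{E}_G[(G\theta)(G\theta)^T]$ is symmetric circulant with $(i,j)$-entry $\tfrac{1}{L}\sum_k \theta(k)\theta(k+j-i)$. A direct Frobenius-norm computation then yields
\[\|\Delta_2(\theta,\phi)\|^2 = \frac{1}{L}\sum_{d=0}^{L-1} c(d)^2, \qquad c(d) := \sum_k \bigl[\theta(k)\theta(k+d) - \phi(k)\phi(k+d)\bigr].\]

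Second, collision-freeness forces each nonzero element of $\mathcal{D}(\theta)$ (resp.\ $\mathcal{D}(\phi)$) to arise from a unique ordered pair in $\supp(\theta)^2$ (resp.\ $\supp(\phi)^2$), so the inner sum $\sum_k \theta(k)\theta(k+d)$ collapses to at most a single product $\theta(i)\theta(j)$. I would then split into cases according to whether $\supp(\theta) = \supp(\phi)$. In the equal-supports case $S := \supp(\theta)$, setting $p_i := \theta(i) + \phi(i)$ and $w_i := \theta(i) - \phi(i)$ and using the factorization $\theta(i)\theta(j) - \phi(i)\phi(j) = \tfrac{1}{2}(p_iw_j + p_jw_i)$, a routine expansion over the $s(s-1)$ ordered pairs in $S$ gives
\[\sum_{d \neq 0} c(d)^2 = \tfrac{1}{2}\|p\|^2\|w\|^2 + \tfrac{1}{2}(p\cdot w)^2 - \sum_{i\in S} p_i^2 w_i^2.\]
The coordinate identity $p_i^2 + w_i^2 = 2(\theta(i)^2 + \phi(i)^2)$ together with $|\theta(i)|, |\phi(i)| \in [m,M]$ yields $\|p\|^2 \geq 4sm^2 - \|w\|^2$ and $p_i^2 \leq 4M^2$, whence $\sum_i p_i^2 w_i^2 \leq 4M^2\|w\|^2$. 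Substituting and invoking the sparsity hypothesis $sm^2 \geq (2+\epsilon)M^2$---which is precisely the inequality that upgrades $2sm^2 - 4M^2 \geq \tfrac{2\epsilon\,sm^2}{2+\epsilon}$---produces the claimed lower bound.

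For the remaining unequal-supports case, I would appeal to the Bekir-Golomb resolution of Piccard's conjecture (applicable since $s \geq 7$): either the difference multisets $\mathcal{D}(\theta)$ and $\mathcal{D}(\phi)$ already disagree, in which case many $c(d)$'s reduce to a single nonzero product of magnitude at least $m^2$ and the desired bound follows by directly lower bounding that subsum, or $\supp(\phi)$ is a cyclic translate of $\supp(\theta)$, in which case the optimal-alignment reduction returns us to the equal-supports analysis after a further cyclic shift of $\phi$.

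The main obstacle, as I see it, is the bookkeeping in the equal-supports case: the constant $\sqrt{2\epsilon/(2+\epsilon)}$ has to emerge from a tight balance between the lower bound on $\|p\|^2$, where the sparsity hypothesis enters, and the upper bound on the diagonal correction $\sum_i p_i^2 w_i^2$, all while the nonnegative cross-term $\tfrac{1}{2}(p\cdot w)^2$ is safely discarded. A residual $\|w\|^4$ term appears when one substitutes $\|p\|^2 \geq 4sm^2 - \|w\|^2$, and handling it---either by absorbing it into a coefficient gap afforded by the strict inequality $sm^2 > (2+\epsilon)M^2$, or by a separate estimate exploiting a priori control on $\rho(\theta,\phi)$ over $\mc{T}$---is the most delicate point of the argument.
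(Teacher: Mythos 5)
The paper does not prove this lemma; it cites it verbatim as \cite[Lemma 5]{Sparse}, so your proposal can only be assessed on its own merits, not against an in-house argument. Your opening two steps are the natural ones and almost certainly mirror the cited proof: $\norm{\Delta_2(\theta,\phi)}^2 = L^{-1}\sum_d c(d)^2$ follows from the circulant structure of $\mathbb{E}[(G\theta)(G\theta)^T]$, and collision-freeness does collapse each autocorrelation $\sum_k\theta(k)\theta(k+d)$ ($d\neq 0$) to a single product. The algebraic identity $\sum_{d\neq 0}c(d)^2 = \tfrac12\norm{p}^2\norm{w}^2 + \tfrac12(p\cdot w)^2 - \sum_i p_i^2w_i^2$ in the equal-supports case is also correct.

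The gap you flag at the end, however, is not a bookkeeping nuisance but fatal to the argument as you have set it up. The sparsity hypothesis $sm^2 \geq (2+\epsilon)M^2$ gives exactly $2sm^2 - 4M^2 \geq \tfrac{2\epsilon}{2+\epsilon}sm^2$ with \emph{no} slack, so after you substitute $\norm{p}^2 \geq 4sm^2 - \norm{w}^2$ the resulting lower bound $\bigl(2sm^2 - 4M^2 - \tfrac12\norm{w}^2\bigr)\norm{w}^2$ falls short of the target $\tfrac{2\epsilon}{2+\epsilon}sm^2\norm{w}^2$ for every $\norm{w}>0$; there is no ``coefficient gap'' to absorb the residual, and $\norm{w}^2 = \rho(\theta,\phi)^2$ is not a priori small on $\mc{T}$. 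The root of the problem is that $\norm{p}^2 + \norm{w}^2 \geq 4sm^2$ is too weak: if $\theta(i)$ and $\phi(i)$ may have opposite signs, $\norm{p}$ can be arbitrarily small, and in the extreme case $\phi = -\theta$ one has $\Delta_2(\theta,\phi)=0$ while generically $\rho(\theta,-\theta)>0$, so the claimed inequality cannot hold over the full class $\mc{T}$ as defined here (which is closed under $\theta\mapsto -\theta$). The intended proof must therefore use the \emph{pointwise} same-sign bound $|p_i| = |\theta(i)+\phi(i)|\geq 2m$, hence $\norm{p}^2 \geq 4sm^2$ outright, which both kills the $\norm{w}^4$ term and makes explicit the sign restriction that the cited statement presumably carries (or, equivalently, the restriction $\rho(\theta,\phi)<\epsilon_0$ that forces $\supp(\theta)=\supp(\phi)$ and same signs, as in Proposition \ref{PreliminariesCollision3}). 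Your treatment of the unequal-supports case is also too thin: a single coordinate $d_0$ with $|c(d_0)|\geq m^2$ yields only $\sum_d c(d)^2 \geq m^4$, which does not dominate $\tfrac{2\epsilon}{2+\epsilon}s\,\rho(\theta,\phi)^2$ when $\rho(\theta,\phi)^2$ is of order $sM^2$, so you would need a quantitative count of $|\mc{D}(\theta)\setminus\mc{D}(\phi)|$ rather than an appeal to one disagreeing difference.
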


\begin{proposition}\label{PreliminariesCollision2}\cite[Proposition 17]{Sparse}
There exists a postive constant $\epsilon_0\in \R_{>0}$ such that for all collision free signals $\theta,\phi\in \mc{T}$ satisfying $\rho(\theta,\phi) < \epsilon_0$ and for all $\sigma$ sufficiently large,
\[\infdiv{P_\theta}{P_\phi} \geq C\sigma^{-4}\cdot \norm{\Delta_2(\theta,\phi)}^2.\]
\end{proposition}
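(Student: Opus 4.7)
The plan is to (a) reduce to centered signals via Theorem \ref{PreliminariesKullback1}, (b) retain only the $m=2$ contribution in the lower bound of Theorem \ref{PreliminariesKullback2}, and then (c) upgrade the resulting centered bound $\norm{\Delta_2(\upvartheta,\varphi)}^2$ to the target bound $\norm{\Delta_2(\theta,\phi)}^2$ by absorbing the centering discrepancy into the $\Delta_1$-contribution already present in the decomposition.

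Write $\mu_\theta := L^{-1}\sum_i \theta(i)$, so that $\mathbb{E}[G\theta] = \mu_\theta \mathbf{1}$, and set $\upvartheta := \theta - \mu_\theta \mathbf{1}$ (similarly for $\varphi$). Since $\Delta_1(\theta,\phi) = (\mu_\theta - \mu_\phi)\mathbf{1}$, Theorem \ref{PreliminariesKullback1} gives
\[\infdiv{P_\theta}{P_\phi} = \infdiv{P_\upvartheta}{P_\varphi} + \frac{L}{2\sigma^2}(\mu_\theta - \mu_\phi)^2.\]
Because $G\mathbf{1} = \mathbf{1}$, one directly checks $\norm{\upvartheta - G\varphi}^2 = \norm{\theta - G\phi}^2 - L(\mu_\theta - \mu_\phi)^2$, hence $\rho(\upvartheta,\varphi) \leq \rho(\theta,\phi) < \epsilon_0$, and $\norm{\upvartheta} \leq \norm{\theta} \leq \sigma$ by the standing hypothesis. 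Using $(\sum_i \theta(i))^2 \leq s\norm{\theta}^2$, together with the collision-free constraint $s(s-1) \leq L-1$ (which forces $s = O(\sqrt{L})$), one obtains a uniform positive lower bound $\norm{\upvartheta}^2 \geq c_0 \,s m^2$ depending only on the fixed parameters $s, m, M, L$. Choosing $\epsilon_0$ sufficiently small (in those parameters) secures $3\rho(\upvartheta,\varphi) \leq \norm{\upvartheta}$, so Theorem \ref{PreliminariesKullback2} applies to the centered pair; retaining only the $m=2$ term yields
\[\infdiv{P_\upvartheta}{P_\varphi} \geq \frac{\underline{C}}{18\sigma^4}\norm{\Delta_2(\upvartheta,\varphi)}^2.\]

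For the upgrade to $\Delta_2(\theta,\phi)$, expanding $(G\upvartheta)^{\otimes 2} = (G\theta - \mu_\theta\mathbf{1})^{\otimes 2}$ and using $\mathbb{E}[G\theta] = \mu_\theta \mathbf{1}$ gives $\mathbb{E}[(G\theta)^{\otimes 2}] = \mathbb{E}[(G\upvartheta)^{\otimes 2}] + \mu_\theta^2\, \mathbf{1}\otimes\mathbf{1}$, and subtracting the analog for $\phi$ produces
\[\Delta_2(\theta,\phi) = \Delta_2(\upvartheta,\varphi) + (\mu_\theta^2 - \mu_\phi^2)\,\mathbf{1}\otimes\mathbf{1}.\]
Hence $\norm{\Delta_2(\theta,\phi)}^2 \leq 2\norm{\Delta_2(\upvartheta,\varphi)}^2 + 2L^2(\mu_\theta + \mu_\phi)^2(\mu_\theta - \mu_\phi)^2$, and the estimate $|\mu_\theta + \mu_\phi| \leq 2sM/L$ bounds the cross term by $8s^2 M^2 (\mu_\theta - \mu_\phi)^2$. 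For $\sigma$ sufficiently large (specifically, $\sigma^2$ above a constant depending only on $s, M, L$), the $\Delta_1$-contribution $\frac{L}{2\sigma^2}(\mu_\theta - \mu_\phi)^2$ dominates this $\sigma^{-4}\cdot 8s^2M^2(\mu_\theta - \mu_\phi)^2$ error, so combining the two pieces yields
\[\infdiv{P_\theta}{P_\phi} \geq C\sigma^{-4}\norm{\Delta_2(\theta,\phi)}^2\]
with $C$ depending only on $\underline{C}, s, m, M, L$.

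The main obstacle is pure bookkeeping: one must verify that $\norm{\upvartheta}$ is bounded below uniformly over $\mc{T}$ (which is where the collision-free constraint, via $s = O(\sqrt{L})$, enters), and one must carefully balance the $\sigma^{-2}$ and $\sigma^{-4}$ scales so that the $\Delta_1$-term absorbs the centering discrepancy. Both ingredients are exactly what the hypotheses ``$\rho(\theta,\phi) < \epsilon_0$'' and ``$\sigma$ sufficiently large'' encode.
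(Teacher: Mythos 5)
Your proof is correct. Since the paper cites Proposition \ref{PreliminariesCollision2} from \cite{Sparse} and does not reprove it, there is no in-paper argument to compare against; but your derivation is a clean, self-contained reconstruction from the two tools the paper does supply, namely the decomposition of Theorem \ref{PreliminariesKullback1} and the moment-tensor lower bound of Theorem \ref{PreliminariesKullback2}. The three nontrivial steps all check out. First, the centered norm $\norm{\upvartheta}^2 = \norm{\theta}^2 - L^{-1}\big(\sum_i\theta(i)\big)^2 \geq sm^2 - s^2M^2/L$ is bounded below by a positive constant once you combine the collision-free packing bound $s(s-1)\leq L-1$ with assumption (iii) (i.e.\ $sm^2 \geq (2+\epsilon)M^2$), so taking $\epsilon_0$ small enough does give $3\rho(\upvartheta,\varphi) \leq \norm{\upvartheta}$, and Theorem \ref{PreliminariesKullback2} applies to the centered pair (the identity $\norm{\upvartheta - G\varphi}^2 = \norm{\theta - G\phi}^2 - L(\mu_\theta-\mu_\phi)^2$ you use to get $\rho(\upvartheta,\varphi)\leq\rho(\theta,\phi)$ is correct since $G\mathbbm{1}=\mathbbm{1}$). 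Second, the identity $\Delta_2(\theta,\phi) = \Delta_2(\upvartheta,\varphi) + (\mu_\theta^2-\mu_\phi^2)\,\mathbbm{1}\otimes\mathbbm{1}$ is correct; it is the order-2 analogue of the expansion in Lemma \ref{PreliminariesGlobal3}. Third, the scale balancing is handled correctly: the discrepancy term contributes $\sigma^{-4}\cdot 8s^2M^2(\mu_\theta-\mu_\phi)^2$, while Theorem \ref{PreliminariesKullback1} already supplies a $\sigma^{-2}\cdot(L/2)(\mu_\theta-\mu_\phi)^2$ term that dominates it once $\sigma^2 \gtrsim s^2M^2/L$, which is exactly the ``$\sigma$ sufficiently large'' hypothesis in the statement.
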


Combining the above two results, we obtain the following 
local lower bound for the KL divergence of collision free signals.

\begin{proposition}\label{PreliminariesCollision3} There 
exist positive constants $\epsilon_0,C \in \R_{>0}$ such that for any pair of collision free signals
$\theta,\phi \in \R^L$ with $\theta\in \mc{T}$ and $\rho(\theta,\phi) < \epsilon_0$ and for any 
$\sigma$ sufficiently large,
\[\infdiv{P_\theta}{P_{\phi}} \geq C\sigma^{-4} s \cdot \rho(\theta,\phi
)^2\]
The threshold $\epsilon_0$ also satisfies the following 
property: for any $\theta,\phi\in \R^L$ satisfying $\theta\in \mc{T}$ and 
$\normnorm{\theta - \phi} < \epsilon_0$, we have $\supp(\theta) = \supp(\phi)$.
\end{proposition}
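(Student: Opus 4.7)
The proof plan is to chain the two previously established results, Lemma~\ref{PreliminariesCollision1} and Proposition~\ref{PreliminariesCollision2}, with the threshold $\epsilon_0$ chosen small enough to secure both the hypotheses of the latter and the support-equality claim. Plugging the bound $\|\Delta_2(\theta,\phi)\|^2 \geq \frac{2\epsilon}{(2+\epsilon)L}\cdot s\cdot \rho(\theta,\phi)^2$ from Lemma~\ref{PreliminariesCollision1} into the conclusion $\infdiv{P_\theta}{P_\phi} \geq C\sigma^{-4}\|\Delta_2(\theta,\phi)\|^2$ of Proposition~\ref{PreliminariesCollision2}, and absorbing the factor $\frac{2\epsilon}{(2+\epsilon)L}$, which depends only on the fixed parameters $L$ and $\epsilon$, into a new constant $C$, I obtain the main inequality $\infdiv{P_\theta}{P_\phi} \geq C\sigma^{-4}s\rho(\theta,\phi)^2$ at once.

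For the choice of $\epsilon_0$, I take it to be the minimum of (a) the locality threshold supplied by Proposition~\ref{PreliminariesCollision2}, and (b) a fraction of the magnitude lower bound $m$ from the definition of $\mc{T}$, say $m/2$. Requirement (b) simultaneously yields the support-equality claim in the second half of the statement: under $\|\theta-\phi\|<\epsilon_0\leq m/2$, the reverse triangle inequality forces $|\phi(i)|\geq m-\epsilon_0>0$ for every $i\in\supp(\theta)$, giving $\supp(\theta)\subseteq\supp(\phi)$; the converse inclusion follows from the same entrywise estimate combined with the magnitude lower bound $m$ on the non-zero entries of $\phi$ (implicit in the collision-free candidate signals being considered within the MRA model).

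The main obstacle is bookkeeping rather than anything conceptual. Lemma~\ref{PreliminariesCollision1} and Proposition~\ref{PreliminariesCollision2} strictly speaking require both $\theta$ and $\phi$ to lie in $\mc{T}$, whereas the hypothesis here only places $\theta$ in $\mc{T}$ and asks $\phi$ to be collision-free. Once the support-equality step is in place, $\phi$ is forced to share the support of $\theta$ (up to cyclic shift, under the weaker hypothesis $\rho(\theta,\phi)<\epsilon_0$) and its entries must lie within $\epsilon_0$ of those of $\theta$, placing $\phi$ in a slightly enlarged neighbourhood of $\mc{T}$. The two invoked results then apply with constants that may be absorbed into the global constant $C$, since all deviations in $m, M, \epsilon$ are controlled by the fixed $\epsilon_0$. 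All substantive content — the second-moment separation from the Bekir--Golomb resolution of Piccard's conjecture, and the reduction of KL divergence to the second-moment tensor via the Taylor-type expansion in Theorem~\ref{PreliminariesKullback2} — is already encapsulated in the prior statements, so the present proposition serves essentially as an assembly step that packages a quadratic-in-$\rho$ lower bound for $\infdiv{P_\theta}{P_\phi}$.
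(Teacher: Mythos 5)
Your proposal is correct and takes the same route the paper takes: the paper states Proposition~\ref{PreliminariesCollision3} with the one-line preamble ``Combining the above two results,'' and provides no further proof, so the intended argument is precisely your chaining of Lemma~\ref{PreliminariesCollision1} into Proposition~\ref{PreliminariesCollision2} with $\epsilon_0$ taken small enough to activate the latter's locality hypothesis. Your extra paragraph handling the support-equality clause (via the entrywise reverse triangle inequality and a cap $\epsilon_0 \leq m/2$) and your observation that the hypotheses of the two cited results formally require $\phi\in\mc{T}$ while the proposition only assumes $\phi$ collision-free are both sensible additions that the paper leaves implicit, and they do not change the substance of the argument.
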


\subsection{Global Lower Bound for the KL Divergence}\label{PreliminariesGlobal}

In this section, we will complement the sharp local bound for the KL divergence 
in Proposition \ref{PreliminariesCollision3} with the following global bound.

\begin{proposition}\label{PreliminariesGlobal1} Let $\theta,\phi \in \R^L$ be two collision free signals, 
with $\theta\in \mc{T}$. There exists a constant $C_s$, depending on $s$, such that
\[\infdiv{P_\theta}{P_{\phi}} \geq C_s \sigma^{-6} \rho(\theta,\phi)^2.\]
for all $\sigma$ sufficiently large.
\end{proposition}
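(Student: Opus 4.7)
The plan is to split at the threshold $\epsilon_0$ from Proposition \ref{PreliminariesCollision3}. In the near regime $\rho(\theta,\phi) < \epsilon_0$, Proposition \ref{PreliminariesCollision3} already yields the stronger bound $\infdiv{P_\theta}{P_\phi} \geq Cs\sigma^{-4}\rho(\theta,\phi)^2$, which implies the claimed $\sigma^{-6}$ bound as soon as $\sigma \geq 1$. All the real work is in the far regime $\rho(\theta,\phi) \geq \epsilon_0$.

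For the far regime, I would first dispose of the easy case where $\norm{\phi}$ is much larger (or much smaller) than $\norm{\theta}$: the term $\tfrac{1}{2\sigma^2}(\norm{\phi}^2 - \norm{\theta}^2)$ in the explicit KL formula dominates (up to a routine estimate of the log-expectation term, whose negative part is controlled via Jensen's inequality), producing a bound of order $\sigma^{-2}\max(\norm{\theta},\norm{\phi})^2$, which comfortably beats $\sigma^{-6}\rho(\theta,\phi)^2 \leq \sigma^{-6}(\norm{\theta}+\norm{\phi})^2$ for $\sigma$ sufficiently large. After this reduction, $\norm{\phi}$ is comparable to $\norm{\theta}$, so the pair $(\theta,\phi)$ lies in a bounded subset of $\R^L\times\R^L$ and $\rho(\theta,\phi)$ is bounded above by a constant depending only on $s$, $L$, $M$. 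In this subcase it suffices to establish the uniform bound $\infdiv{P_\theta}{P_\phi}\geq C_s'\sigma^{-6}$.

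To obtain this uniform bound, I would center both signals: set $\upvartheta = \theta - \E[G\theta]$ and $\varphi = \phi - \E[G\phi]$, so that by Theorem \ref{PreliminariesKullback1} one has $\infdiv{P_\theta}{P_\phi} \geq \infdiv{P_\upvartheta}{P_\varphi}$. The relaxed hypothesis $\rho(\upvartheta,\varphi) \leq K_0\norm{\upvartheta}$ of Theorem \ref{PreliminariesKullback2} holds on the bounded locus with a suitable fixed $K_0$, since $\norm{\upvartheta} \geq c m\sqrt{s}$ from the sparsity structure of $\theta \in \mc{T}$. Invoking Theorem \ref{PreliminariesKullback2} then gives
\[\infdiv{P_\upvartheta}{P_\varphi} \geq \frac{\underline{C}}{18\,\sigma^4}\norm{\Delta_2(\upvartheta,\varphi)}^2 + \frac{\underline{C}}{162\,\sigma^6}\norm{\Delta_3(\upvartheta,\varphi)}^2.\]
It therefore suffices to show that $\norm{\Delta_2(\upvartheta,\varphi)}^2 + \norm{\Delta_3(\upvartheta,\varphi)}^2$ is bounded below by a positive constant $c_s$ uniformly over the compact locus of collision-free pairs with $\rho(\theta,\phi) \geq \epsilon_0$. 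By continuity and compactness, this reduces to the pointwise assertion that the functional is strictly positive whenever $\rho(\theta,\phi) > 0$, which is equivalent to the injectivity (modulo the $\mc{R}$-action) of the map sending a collision-free signal to the pair of its centered second and third moment tensors.

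The main technical obstacle is this injectivity step. The (uncentered) second moment tensor encodes the autocorrelation of the signal, so by the Bekir--Golomb resolution of Piccard's conjecture for $s \geq 7$ it determines the support up to translation; the third moment tensor then pins down the signal values at the support by a bispectrum-type argument. Some care is required to verify that the passage to centered tensors does not collapse this separation, i.e.\ that the pair of centered second and third moment tensors still distinguishes distinct $\mc{R}$-orbits of collision-free signals in $\mc{T}$.
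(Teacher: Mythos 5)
Your overall reduction (peel off the local regime via Proposition~\ref{PreliminariesCollision3}, then handle the far regime by compactness plus Theorem~\ref{PreliminariesKullback2}) is structurally parallel to the paper, but there is a genuine gap at exactly the point you flag: the claimed injectivity of the \emph{centered} second-and-third moment map. The trouble is that $\upvartheta = \theta - \overline{\theta}\mathbbm{1}$ has full support, so the collision-free zero pattern that makes Lemma~\ref{PreliminariesGlobal2} a one-line computation (a sparse bispectrum) is destroyed by centering; the Bekir--Golomb argument you invoke applies to the \emph{uncentered} autocorrelation. Worse, $\|\Delta_2(\upvartheta,\varphi)\|^2 + \|\Delta_3(\upvartheta,\varphi)\|^2$ can a priori vanish even when $\rho(\theta,\phi) > 0$, namely when $\upvartheta$ and $\varphi$ lie in the same $\mc{R}$-orbit but $\overline{\theta}\neq\overline{\phi}$; distinguishing such pairs requires the first moment tensor $\Delta_1$, which your functional does not see. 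The paper avoids this entirely: it proves $\Delta_3(\theta,\phi)\neq 0$ for the \emph{uncentered} tensors (Lemma~\ref{PreliminariesGlobal2}, where the sparse structure is available), then uses the identity in Lemma~\ref{PreliminariesGlobal3} to transfer this to the centered $\Delta_3(\upvartheta,\varphi)$ \emph{conditionally} on the means and the centered second moments being close, and separately extracts a $\sigma^{-2}$ bound from $\Delta_1$ when the means differ and a $\sigma^{-4}$ bound from $\Delta_2$ when the centered second moments differ. Your plan leaves the hard step open rather than routing around it.

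A secondary gap: your disposal of the ``$\|\phi\|$ much larger than $\|\theta\|$'' regime by inspecting $\tfrac{1}{2\sigma^2}(\|\phi\|^2-\|\theta\|^2)$ in the explicit KL formula does not go through as stated. Bounding $\E_\xi[\log\E_G e^{(\theta+\sigma\xi)^T G\phi/\sigma^2}]$ crudely (e.g.\ by $\|\phi\|\|\theta+\sigma\xi\|/\sigma^2$) yields a term of order $\|\phi\|\sqrt{L}/\sigma$, which swamps $\|\phi\|^2/\sigma^2$ unless $\|\phi\|\gtrsim\sigma$; so the quadratic-in-$\|\phi\|$ gain you want does not come for free. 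The paper handles exactly this case ($\rho(\theta,\phi)>3\|\theta\|$) with a dedicated argument (Lemma~\ref{AppendixA1}) based on the Donsker--Varadhan variational formula applied to $f(x)=-\lambda\|x\|^2$, which yields the needed $\sigma^{-4}\|\phi\|^2$ bound uniformly, without any assumption that $\|\phi\|$ exceeds $\sigma$.
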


As it turns out, the third moment tensor for collision free signals have a very simple form. 
Thus our strategy for proving Proposition \ref{PreliminariesGlobal1} will be to show that the third moment difference 
tensor is nonvanishing for two distinct collision free signals and then invoke 
Theorem \ref{PreliminariesKullback2}. 

\begin{lemma}\label{PreliminariesGlobal2} Let 
$\theta,\phi \in \R^L$ be two collision free signals lying in distinct orbits. Then $\Delta_3(\theta,\phi) \neq 0$.
\end{lemma}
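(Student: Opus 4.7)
The plan is to argue the contrapositive: if $\Delta_3(\theta,\phi) = 0$, then $\theta$ and $\phi$ must lie in a common $\mc{R}$-orbit. The starting observation is that the $(i,j,k)$-entry of $\mathbb{E}[(G\theta)^{\otimes 3}]$ depends on the indices only through the pair of differences $(j-i,\ k-i)$, yielding the triple correlation
\[
C_\theta(a,b) := \frac{1}{L}\sum_{m=1}^L \theta(m)\theta(m+a)\theta(m+b).
\]
Thus $\Delta_3(\theta,\phi) = 0$ is equivalent to $C_\theta(a,b) = C_\phi(a,b)$ for every $a, b \in \Z/L\Z$, and the task reduces to reconstructing $\theta$ from these triple correlations up to the $\mc{R}$-action.

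The next step is to exploit collision-freeness to get a very clean formula for $C_\theta(a,b)$ at off-diagonal arguments. For each nonzero $a \in \mc{D}(\theta)$, collision-freeness singles out a unique ordered pair $(m_a,\ m_a + a) \in \supp(\theta)^2$ whose difference is $a$. Hence, when $a \neq 0$ and $b \notin \{0, a\}$, at most one summand in $C_\theta(a,b)$ is nonzero, and
\[
C_\theta(a,b) = \frac{1}{L}\,\theta(m_a)\theta(m_a + a)\theta(m_a + b)\,\mathbf{1}\{m_a + b \in \supp(\theta)\}.
\]
In particular, for fixed such $a$, the set $\{b \in \Z/L\Z \setminus \{0, a\} : C_\theta(a,b) \neq 0\}$ equals $(\supp(\theta) - m_a) \setminus \{0, a\}$, a set of cardinality $s - 2 \geq 5$. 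An analogous formula holds for $\phi$ with some $m'_a$.

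Matching the supports of $b \mapsto C_\theta(a,b)$ and $b \mapsto C_\phi(a,b)$ then forces $\supp(\phi) - m'_a = \supp(\theta) - m_a$, so that $\supp(\phi)$ is a cyclic translate of $\supp(\theta)$. Since $\Delta_3(\theta, R^{(\ell)}\phi) = \Delta_3(\theta, \phi)$ for every $\ell$ (the uniform law on $\mc{R}$ is translation-invariant), one may replace $\phi$ by a suitable shift within its orbit to reduce to the case $\supp(\theta) = \supp(\phi) =: S$. In this regime, the compact formula above gives, for every ordered triple $(s_i, s_j, s_p)$ of distinct elements of $S$,
\[
\theta(s_i)\theta(s_j)\theta(s_p) = \phi(s_i)\phi(s_j)\phi(s_p).
\]
Setting $r(s) := \theta(s)/\phi(s)$ on $S$ (well-defined and real), this becomes $r(s_i)r(s_j)r(s_p) = 1$ for every distinct triple. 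Fixing two of the indices and letting the third vary shows that $r$ must be constant on $S$; the resulting constant then satisfies $r^3 = 1$ in $\R$, so $r \equiv 1$ and $\theta = \phi$.

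The principal source of difficulty is the careful bookkeeping required to identify which entries of $\Delta_3$ carry genuinely third-order information: the diagonal indices $a = 0$, $b = 0$, $a = b$ collapse $C_\theta(a,b)$ into lower-order quantities (norms and second moments), and one needs to verify that the strictly off-diagonal entries alone already suffice to pin down both the support (up to cyclic translation) and the values on it. Once the displayed formula for $C_\theta(a,b)$ is secured, the rest of the argument is short and algebraic, relying only on $|\supp(\theta)| \geq 7$ for the final inversion of the product equations and on the invariance of $\Delta_3$ under right translation of $\phi$ within its orbit.
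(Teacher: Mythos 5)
Your argument is correct, and it takes a genuinely different route from the paper's. Both proofs argue by contraposition (recover the orbit from the third moment tensor), but they rely on different slices of the tensor. The paper uses the entries with two repeated indices: from $\E[(G\theta)^{\otimes 3}]_{i,j,j}$ with $i\neq j$ it first reads off the difference set $\mc{D}(\theta)$, then invokes the Bekir--Golomb resolution of Piccard's conjecture (the beltway problem, requiring $s\geq 7$) to deduce that $\supp(\theta)$ is determined by $\mc{D}(\theta)$ up to cyclic shift, and finally recovers $\theta(i)^3 = L\,\E[(G\theta)^{\otimes 3}]_{i,i,j}^2 / \E[(G\theta)^{\otimes 3}]_{i,j,j}$. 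Your proof instead works with the strictly off-diagonal entries $(i,j,k)$, $i,j,k$ pairwise distinct; under collision-freeness the triple correlation $C_\theta(a,b)$ at $a\neq 0$, $b\notin\{0,a\}$ collapses to a single term, so the $b$-support of $C_\theta(a,\cdot)$ is exactly the translate $(\supp(\theta)-m_a)\setminus\{0,a\}$. Matching these fibers forces $\supp(\phi)$ to be a cyclic shift of $\supp(\theta)$ directly, bypassing the beltway theorem altogether, and the value recovery then follows from the ratio argument $r(s_i)r(s_j)r(s_p)=1$ on all distinct triples. Your approach buys a more elementary proof that does not need the hard combinatorial input, and in fact only requires $s\geq 4$ for the final pinning of the ratio $r$ to a constant (your stated $s\geq 7$ is more than sufficient); the paper's approach is more terse in the value-recovery step at the cost of leaning on Bekir--Golomb. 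Both are valid; yours is arguably cleaner and more self-contained.
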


\begin{proof} We will show that the orbit of any collision free signal $\theta$ 
can be recovered purely from its third moment tensor. 
The collision free property of $\theta$ implies that for any $1\leq i,j \leq L$ with $i\neq j$, we have that
\[\mathbb{E}[(G\theta)^{\otimes 3}]_{i,j,j} &= \frac{1}{L}\sum_{g\in \Z_L} \theta(i + g) \theta(j + g)\theta(j+g)
\\ &= \begin{cases}  0  & \ \text{ if } i-j \not\in \mc{D}(\theta) \\
\dfrac{1}{L}\theta(i')\theta(j')\theta(j') & \ \text{ otherwise, where } i',j'\in \text{supp}(\theta) \text{ with } i - j = i' - j'.
\end{cases}\]
In particular, the set $\mc{D}(\theta)$, 
and hence the support of $\theta$,  
can be recovered from its third moment tensor up to $\mc{R}$-orbit. Next, for each $i\in \supp(\theta)$, 
to recover $\theta(i)$, let $1\leq j \leq L$ be such that 
$i - j \in \mc{D}(\theta)$. We have that
\[\mathbb{E}[(G\theta)^{\otimes 3}]_{i,i,j} = \frac{1}{L}\theta(i)\theta(i)\theta(j) \ \ \ \ \text{ and } 
\ \ \ \ \frac{1}{L}\mathbb{E}[(G\theta)^{\otimes 3}]_{i,j,j} = \theta(i) \theta(j)\theta(j).\]
Thus \[\theta(i)^3 = \dfrac{L \cdot \mathbb{E}[(G\theta)^{\otimes 3}]_{i,i,j}^2}{\mathbb{E}[(G\theta)^{\otimes 3}]_{i,j,j}}\] 
can be obtained from the third moment tensor. The conclusion follows.
\end{proof}

\begin{lemma}\label{PreliminariesGlobal3} Let $\theta,\phi\in \R^L$. We denote $\overline{\theta} = \frac{1}{L} \sum_{i=1}^L \theta(i)$ and $\overline{\phi} = \frac{1}{L}\sum_{i=1}^L \phi(i)$. Let $\upvartheta = \theta - \E[G\theta]$ 
and $\varphi = \phi - \E[G\phi]$. 
Then
\[\Delta_3(\theta,\phi) = \Delta_3(\upvartheta,\varphi) + 3\text{Sym}\Big(\mathbbm{1} 
\otimes \big(\overline{\theta} \cdot \mathbb{E}[(G\upvartheta)^{\otimes 2}] - 
\overline{\phi} \cdot \mathbb{E}[(G\varphi)^{\otimes 2}]\big) \Big)  + (\overline{\theta}^3 - \overline{\phi}^3)\cdot \mathbbm{1}^{\otimes 3},\] 
where $\text{Sym}$ is the symmetrization operator 
which acts on order-$3$ tensors by averaging over all permutations of the indices:
\[\text{Sym}(T)_{i,j,k} := \frac{1}{6}\sum_{\sigma\in S_3} T_{\sigma(i), \sigma(j),\sigma(k)}.\]
\end{lemma}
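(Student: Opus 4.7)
The plan is to expand both sides by writing each signal as its centered version plus its mean, and then use multilinearity of the tensor product together with invariance of $\mathbbm{1}$ under cyclic shifts. The key observation is that the average of cyclic shifts of any vector is the constant vector, so $\mathbb{E}[G\theta] = \overline{\theta}\cdot \mathbbm{1}$ and $\mathbb{E}[G\phi] = \overline{\phi}\cdot\mathbbm{1}$. Consequently $\theta = \upvartheta + \overline{\theta}\,\mathbbm{1}$ and $\phi = \varphi + \overline{\phi}\,\mathbbm{1}$, and because $G\mathbbm{1} = \mathbbm{1}$ for all $G\in\mathcal{R}$, we get $G\theta = G\upvartheta + \overline{\theta}\,\mathbbm{1}$ and $G\phi = G\varphi + \overline{\phi}\,\mathbbm{1}$.

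With these identifications in hand, I would expand $(G\theta)^{\otimes 3} = (G\upvartheta + \overline{\theta}\,\mathbbm{1})^{\otimes 3}$ using the tensor binomial expansion, which produces eight terms corresponding to choosing $G\upvartheta$ or $\overline{\theta}\,\mathbbm{1}$ in each of the three tensor slots. After grouping terms according to the number of $\mathbbm{1}$ factors, the expansion takes the form
\[
(G\theta)^{\otimes 3} = (G\upvartheta)^{\otimes 3} + 3\,\overline{\theta}\cdot\mathrm{Sym}\bigl(\mathbbm{1}\otimes (G\upvartheta)^{\otimes 2}\bigr) + 3\,\overline{\theta}^{\,2}\cdot\mathrm{Sym}\bigl(\mathbbm{1}^{\otimes 2}\otimes G\upvartheta\bigr) + \overline{\theta}^{\,3}\,\mathbbm{1}^{\otimes 3},
\]
where the symmetrization arises because each grouping of terms differs only in the positions of its slots. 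Taking expectation over $G$ and exploiting linearity, the third summand is killed by the identity $\mathbb{E}[G\upvartheta] = \mathbb{E}[G\theta] - \overline{\theta}\,\mathbbm{1} = 0$, which follows directly from the definition of $\upvartheta$.

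Doing the same expansion for $\phi$ and subtracting, the $(G\upvartheta)^{\otimes 3}$ and $(G\varphi)^{\otimes 3}$ terms combine into $\Delta_3(\upvartheta,\varphi)$, the symmetrized terms combine into $3\,\mathrm{Sym}(\mathbbm{1}\otimes(\overline{\theta}\,\mathbb{E}[(G\upvartheta)^{\otimes 2}] - \overline{\phi}\,\mathbb{E}[(G\varphi)^{\otimes 2}]))$ by linearity of $\mathrm{Sym}$ and of the tensor product in the second factor, and the purely constant terms contribute $(\overline{\theta}^{\,3} - \overline{\phi}^{\,3})\,\mathbbm{1}^{\otimes 3}$. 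This reproduces the claimed identity.

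There is no genuine obstacle here; the proof is a direct multilinear computation. The only point that requires care is bookkeeping the symmetrization: the three-term coefficients of $3$ come from the three positions in which $\mathbbm{1}$ can be placed in the slots, and one must verify that averaging over $G$ commutes with $\mathrm{Sym}$ (trivially, since $\mathrm{Sym}$ acts on the tensor indices while $G$ acts on the coordinates within each slot). Once this is observed, the result is immediate.
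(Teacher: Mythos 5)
Your proof is correct and follows essentially the same route as the paper: decompose each signal into its centered part plus the constant mean vector, expand $(G\upvartheta + \overline{\theta}\,\mathbbm{1})^{\otimes 3}$ by multilinearity, kill the cross-terms linear in $G\upvartheta$ upon taking expectation, and regroup the survivors under $\mathrm{Sym}$. The bookkeeping of the symmetrization coefficients and the observation that taking expectation over $G$ commutes with $\mathrm{Sym}$ are both handled correctly.
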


\begin{proof} Let $\mu_\theta = \mathbb{E}[G\theta]$ and 
$\mu_\phi = \mathbb{E}[G\phi]$. Observe that
\[\mathbb{E}[(G\theta)^{\otimes 3}] &=
\mathbb{E}[(G\upvartheta + G\mu_\theta)^{\otimes 3}]  \\[1mm]
&= \mathbb{E}[(G\upvartheta + \mu_\theta)^{\otimes 3}] \\[1mm]
&= \mathbb{E}[(G\upvartheta)^{\otimes 3}] + \mathbb{E}[G\upvartheta \otimes G\upvartheta\otimes\mu_\theta]
+ \mathbb{E}[G\upvartheta \otimes\mu_\theta\otimes G\upvartheta]
+ \mathbb{E}[ \mu_\theta\otimes G\upvartheta\otimes G\upvartheta ]  \\[1mm]
&+\mathbb{E}[G\upvartheta \otimes \mu_\theta\otimes\mu_\theta]
+ \mathbb{E}[\mu_{\theta}\otimes G\upvartheta\otimes\mu_{\theta}]
+ \mathbb{E}[ \mu_{\theta}\otimes\mu_{\theta}\otimes G\upvartheta ]  
+ \mu_{\theta} \otimes \mu_{\theta}\otimes \mu_{\theta} \\[1mm]
&= \mathbb{E}[(G\upvartheta)^{\otimes 3}] + \mathbb{E}[G\upvartheta \otimes G\upvartheta\otimes\mu_\theta]
+ \mathbb{E}[G\upvartheta \otimes\mu_\theta\otimes G\upvartheta]
+ \mathbb{E}[ \mu_\theta\otimes G\upvartheta\otimes G\upvartheta ]  \\[1mm]
&+\mathbb{E}[G\upvartheta] \otimes \mu_\theta\otimes\mu_\theta
+ \mu_{\theta}\otimes \mathbb{E}[G\upvartheta]\otimes\mu_{\theta}
+ \mu_{\theta}\otimes\mu_{\theta}\otimes \mathbb{E}[G\upvartheta ]  
+ \mu_{\theta}^{\otimes 3} \\[1mm]
&= \mathbb{E}[(G\upvartheta)^{\otimes 3}] + 3\text{Sym}\big(
\mu_\theta \otimes  \mathbb{E}[(G\upvartheta)^{\otimes 2}]\big) + \mu_\theta^{\otimes 3}.\]
Since the Symmetrization operator is linear, we obtain\[\Delta_3(\theta,\phi) = \Delta_3(\upvartheta,\varphi) + 3\text{Sym}\Big(\mathbbm{1} 
\otimes \big(\overline{\theta} \cdot \mathbb{E}[(G\upvartheta)^{\otimes 2}] - 
\overline{\phi} \cdot \mathbb{E}[(G\varphi)^{\otimes 2}]\big) \Big) + (\overline{\theta}^3 - \overline{\phi}^3)\cdot \mathbbm{1}^{\otimes 3},\]
as desired.
\end{proof}

We are now ready to prove Proposition \ref{PreliminariesGlobal1}.

\begin{proof}[Proof of Proposition \ref{PreliminariesGlobal1}] Since both the KL divergence 
and the orbit distance are invariant under the action of the group $\mc{R}$, 
we assume without loss of generality that $\rho(\theta,\phi) = \norm{\theta - \phi}$. By Proposition \ref{PreliminariesCollision3} , 
it suffices to prove the above statement for the case of 
$\rho(\theta,\phi) \geq \epsilon_0.$ We will divide our argument into two cases. 

\textbf{Case 1.} $\epsilon_0 \leq \rho(\theta ,\phi) \leq 3\norm{\theta}$.

Observe that $\rho(\theta,\phi)$ is bounded above as 
$\norm{\theta}$ is (recall that we require $|\theta(i)| \leq M$ for all $1 \leq i \leq L$). Thus it suffices to show that 
\[\infdiv{P_\theta}{P_\phi} \geq \sigma^{-6} \epsilon_s\] for some small constant $\epsilon_s$ depending only 
on $s$. In what follows, we denote $\upvartheta = \theta - \E[G\theta]$ 
and $\varphi = \phi - \E[G\phi]$. Firstly, by lemma \ref{PreliminariesGlobal2},  
we have that $\Delta_3(\theta,\phi) \neq 0$. 
For each $\theta\in \mc{T}$, 
define \[B_{\theta,\epsilon_0} := \big\{\phi \in \R^L \ : \ \epsilon_0 \leq 
\rho(\theta,\phi) \leq 3\norm{\theta} \big\}.\]
Then set
\[\delta_0 := \inf_{\theta\in \mc{T}} \inf_{\phi \in B_{\theta,\epsilon_0}} \norm{\Delta_3(\theta,\phi)}.\]
Note that $\delta_0$ is a strictly positive constant 
since both $B_{\theta,\epsilon_0}$ and $\mc{T}$ are compact sets. Furthermore, $\delta_0$ is 
independent of both $\theta$ and $\phi$ (but may depend on $s$). Let $\delta\in \R_{>0}$ be a small constant, whose value may decrease from line to line, satisfying
\begin{equation}\label{PreliminariesGlobal14} \norm{3\cdot\text{Sym}\Big(\mathbbm{1} 
\otimes \big(\overline{\theta} \cdot \mathbb{E}[(G\upvartheta)^{\otimes 2}] - 
\overline{\phi} \cdot \mathbb{E}[(G\varphi)^{\otimes 2}]\big) \Big) + (\overline{\theta}^3 - \overline{\phi}^3)\cdot \mathbbm{1}^{\otimes 3}} 
\leq \frac{\delta_0}{2}
\end{equation}
for all $\theta,\phi\in \R^L$ such that $|\overline{\theta} - \overline{\phi}| \leq \delta$ and $\norm{\Delta_2(\upvartheta,\varphi)}\leq \delta$. By Theorem \ref{PreliminariesKullback1},
\[\infdiv{P_\theta}{P_\phi} = \frac{1}{2\sigma^2}\norm{\Delta_1(\theta,\phi)}^2 + \infdiv{P_\upvartheta}{P_\varphi}.\]
Thus if $|\overline{\theta}  - \overline{\phi}| > \delta$, we have that
\[\infdiv{P_\theta}{P_\phi}  \geq  \frac{1}{2\sigma^{2}}\norm{\Delta_1(\theta,\phi)}^2 
\geq \frac{\delta^2 L}{2\sigma^2} \geq \delta^2\sigma^{-6}.\]
Now suppose that $|\overline{\theta} - \overline{\phi}| \leq \delta$. If 
$\norm{\Delta_2(\upvartheta,\varphi)} > \delta$,
by \cite[Theorem 9]{BRW}, there exists an universal positive constant $C\in \R_{>0}$ 
such that
\[\infdiv{P_\theta}{P_\phi} \geq \infdiv{P_\vartheta}{P_\varphi} \geq  C \frac{\norm{\Delta_2(\upvartheta,\varphi)}^2}{\sigma^4} 
\geq \frac{C\delta^2}{\sigma^4} \geq 
\delta^2 \sigma^{-6}. \] 
Finally, suppose that we have both $|\overline{\theta} - \overline{\phi}| \leq \delta$ 
and $\norm{\Delta_2(\upvartheta,\varphi)} \leq \delta$. 
Using (\ref{PreliminariesGlobal14}) and Lemma \ref{PreliminariesGlobal3}, we obtain 
\[\norm{\Delta_3(\upvartheta,\varphi)} \geq \norm{\Delta_3(\theta,\phi)} - \frac{\delta_0}{2} \geq  \frac{\delta_0}{2}.\]
Again by Theorem \ref{PreliminariesKullback2}, we obtain
\[\infdiv{P_\theta}{P_\phi} \geq \infdiv{P_\vartheta}{P_\varphi} \geq  C \frac{\norm{\Delta_3(\upvartheta,\varphi)}^2}{\sigma^6} 
\geq \frac{C\delta^2_0}{4\sigma^6}.\]
The desired conclusion follows by choosing 
$\epsilon_s = \min \{ \frac{C\delta_0^2}{4}, \delta^2\}.$
 
\textbf{Case 2.} $\rho(\theta,\phi) > 3\norm{\theta}.$

By Lemma \ref{AppendixA1} , we in fact have a stronger bound 
$\infdiv{P_\theta}{P_\phi} \geq  C \sigma^{-4}\rho(\theta,\phi)^2$.
\end{proof}

\section{Uniform Rates of Convergence for the Maximum Likelihood Estimator}\label{Uniform}

The goal of this section is to prove Theorem \ref{IntroductionMain4}. Throughout this section, 
we fix a collision free signal $\theta \in \mc{T}$, which plays the role of 
the unknown parameter which we are trying to estimate. Let $s = |\supp(\theta)|$ and define the subspace 
\[L_\theta := \Big\{ \phi\in \R^L \ : \ \phi(i) = 0 \text{ for all } i\not\in \supp(\theta)\Big\}.\]
In this section, all derivatives will be taken with respect to the subspace $L_\theta$. In other words, 
for a (twice continuously differentiable) function $g : \R^L \to \R$ and a point $p\in \R^L$, we define the gradient and the Hessian to be
\[\nabla g(p)_i &:= \begin{cases}
\dfrac{\partial g}{\partial x_i}(p) & \text{if } i \in \supp(\theta), \\
0 & \text {otherwise};
\end{cases}  \\[2mm]
(H_g(p))_{ij} &:= \begin{cases}
\dfrac{\partial g}{\partial x_i\partial x_j}(p) & \text{if } i,j \in \supp(\theta), \\
0 & \text {otherwise}.
\end{cases} \]
Let $D : \R^L \to \R_+$ denote the map
\[D(\phi) := \infdiv{P_\theta}{P_\phi} = \mathbb{E}\bigg[\log \frac{f_\theta(X)}{f_\phi(X)}\bigg].\]
Let $H = H_{D}$ denote the Hessian of the above map at the point $\theta$ (on the subspace $L_\theta$). Since $H$ is positive semidefinite, it induces a seminorm 
$\normnorm{\cdot }_H : \R^L \to \R_+$ via
\[\norm{\phi}_H := \sqrt{\phi^T H \phi}.\]
For i.i.d samples $X_1,\cdots,X_n$ drawn according to $P_\theta$, the restricted MLE $\tilde{\theta}_n$ is a minimizer 
of 
\[D_n(\phi) := \frac{1}{n}\sum_{i=1}^n \log \frac{f_\theta}{f_\phi}(X_i)\]
on the set $\mc{T}.$

Our proof builds upon the approach in \cite[Theorem 4]{BRW}. In our proof, we will invoke the following results.

% \pagebreak

\begin{lemma}\label{Uniform1}\cite[Lemma B.6]{BRW} Let $\epsilon\in \R_{>0}$ and let $\mc{B}_\epsilon = \{\phi \in \R^L \ : \ \rho(\phi,\theta) \leq \epsilon\}$. Let $H_{D_n}$ denote the Hessian of $D_n$. There exists a constant $C$ 
such that
\[\mathbb{E}\bigg[\sup_{\phi \in \mc{B}_\epsilon} \norm{H_{D}(\phi) - H_{D_n}(\phi)}^2_{\text{op}}\bigg] \leq C\frac{\log n}{n\sigma^4}.\]
\end{lemma}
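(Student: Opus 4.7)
The plan is to view the quantity inside the supremum as a deviation of an empirical mean of matrix-valued functions from its expectation, then control it uniformly by combining a covering net with pointwise matrix concentration. Exchanging the Hessian with summation and expectation,
\[H_{D_n}(\phi) - H_D(\phi) = -\frac{1}{n}\sum_{i=1}^n \Big(H_{\log f_\phi}(X_i) - \mathbb{E}[H_{\log f_\phi}(X)]\Big),\]
so the task reduces to bounding $\mathbb{E}\sup_{\phi\in \mc{B}_\epsilon}\normnorm{\frac{1}{n}\sum_i Y_i(\phi)}^2_{\text{op}}$, where $Y_i(\phi) := H_{\log f_\phi}(X_i) - \mathbb{E}[H_{\log f_\phi}(X)]$ are i.i.d.\ centered symmetric matrices, nonzero only on the $s\times s$ block indexed by $\supp(\theta)$ (since derivatives are restricted to $L_\theta$).

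The second ingredient is a pointwise control of $H_{\log f_\phi}(x)$ and of its Lipschitz constant in $\phi$. Writing $f_\phi$ as a Gaussian mixture and carrying out two derivatives yields
\[H_{\log f_\phi}(x) = -\frac{1}{\sigma^2}\mathbb{E}^{\pi_x}[G^T G] + \frac{1}{\sigma^4}\mathrm{Cov}^{\pi_x}\big(G^T(x - G\phi)\big),\]
where $\pi_x$ is the Gibbs measure on $\mc{R}$ with weights proportional to $\exp(-\normnorm{x-G\phi}^2/(2\sigma^2))$. The orthogonality $G^T G = I$ makes the first term a deterministic matrix of operator norm $\sigma^{-2}$, and on the truncation event $\{\normnorm{X_i} \leq C\sigma\sqrt{\log n}\}$, which occurs with probability at least $1 - n^{-c}$ by a standard Gaussian tail bound, one has $\normnorm{G^T(x - G\phi)} = O(\sigma\sqrt{\log n})$, so that the covariance term is $O(\sigma^{-2}\log n)$. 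Essentially the same differentiation controls the Lipschitz constant of $\phi \mapsto H_{\log f_\phi}(X_i)$ by a polynomial in $\sigma^{-1}$ and $\log n$ on this event.

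The final step is a covering plus matrix Bernstein argument. The relevant set $\mc{B}_\epsilon \cap L_\theta$ is $s$-dimensional and bounded, so it admits an $\eta$-net of cardinality $N \lesssim \eta^{-s}$. For each net point $\phi_j$, matrix Bernstein applied to the i.i.d.\ matrices $Y_i(\phi_j)$, with variance $O(\sigma^{-4})$ and uniform operator norm $O(\sigma^{-2}\log n)$, gives
\[\normnorm{H_{D_n}(\phi_j) - H_D(\phi_j)}_{\text{op}} \leq C\sigma^{-2}\sqrt{\frac{t + \log s}{n}} + C\frac{\sigma^{-2}\log n \cdot t}{n}\]
with probability at least $1 - 2s e^{-t}$. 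Choosing $\eta \asymp n^{-1}$, so that the Lipschitz extension between nearby $\phi$'s contributes only $O(\sigma^{-2}n^{-1})$, taking a union bound over the net, and integrating the resulting tail in $t$ yields the claimed $O(\log n/(n\sigma^4))$ bound.

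The main obstacle, and where all the technical effort goes, is the uniform pointwise control of $H_{\log f_\phi}(x)$: since $\log f_\phi$ is the logarithm of a ratio of exponential sums, its second derivative involves fractions that could in principle blow up when $\pi_x$ is nearly singular. The resolution is the Gaussian truncation of $X_i$ above combined with the free deterministic control $G^T G = I$ on the leading term. This is exactly the step carried out in the proof of \cite[Lemma B.6]{BRW}; the only modification needed here is that derivatives live in $L_\theta$, so the ambient dimension $L$ is replaced by $s$ throughout the covering estimates and in the matrix Bernstein constants.
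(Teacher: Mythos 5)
The paper does not give its own proof of this lemma: it is cited directly as \cite[Lemma B.6]{BRW}, and the paper relies on the BRW proof. Your outline — decomposing $H_{D_n}-H_D$ into an i.i.d.\ sum of centered matrices, the explicit Gibbs-measure formula for $H_{\log f_\phi}$ with the simplification $G^TG = I$, Gaussian truncation of $\|X_i\|$, a covering net plus a pointwise matrix concentration bound, and a final integration of the tail — is the correct blueprint and, as you yourself observe, it reproduces the structure of the BRW argument. So at the level of approach you are not doing anything different from what the paper implicitly does.

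One observation is worth making about your closing remark that the ``only modification needed here is that derivatives live in $L_\theta$, so the ambient dimension $L$ is replaced by $s$ throughout.'' That replacement is in fact unnecessary: the restricted Hessian used in this paper is exactly the compression $P\,H\,P$, where $P$ is the coordinate projection onto $L_\theta$, and since $\|P\|_{\mathrm{op}} = 1$ we have
\[
\big\|P\big(H_{D}(\phi)-H_{D_n}(\phi)\big)P\big\|_{\mathrm{op}}
\;\le\;
\big\|H_{D}(\phi)-H_{D_n}(\phi)\big\|_{\mathrm{op}},
\]
so the paper's restricted statement follows at once from BRW's unrestricted one, with no reworking of the covering estimates or the Bernstein constants. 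Beyond this, two steps in your outline are stated rather than proved and would need to be filled in to make the argument complete: (i) the variance proxy $O(\sigma^{-4})$ in matrix Bernstein requires showing that the contribution to $\mathbb{E}[Y_i(\phi)^2]$ from the complement of the truncation event is negligible (your truncation bounds $\|Y_i(\phi)\|_{\mathrm{op}}$ only on the good event), and (ii) the Lipschitz control of $\phi\mapsto H_{\log f_\phi}(x)$ needs an explicit third-derivative bound of the same Gibbs-covariance type. Both are straightforward given the formula you already wrote, but as presented they are asserted rather than established.
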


\begin{lemma}\label{Uniform2}\cite[Lemma~B.7]{BRW}
Let $\mc{L}$ be any vector subspace of $\R^L$. Suppose that there exists a positive integer $k$ and positive constants $c$ and $C$ such that for all $\theta,\phi \in \mc{L}$ satisfying 
$c^{-1} \leq \norm{\theta} \leq c$ and $\norm{\theta}\leq \sigma$, we have that
\[\infdiv{P_\theta}{P_\phi} \geq C\sigma^{-2k}\rho(\theta,\phi)^2.\]
Then the restricted MLE $\tilde{\theta}_n$ satisfies
\[\mathbb{E}[\rho(\tilde{\theta}_n,\theta)^2] \leq \tilde{C}\frac{\sigma^{4k-2}}{n}\]
for some positive constant $\tilde{C}$.
\end{lemma}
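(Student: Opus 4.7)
The plan is the classical M-estimator route. Since $\tilde\theta_n$ minimizes $D_n$ over the bounded parameter set and $D_n(\theta) = 0$, we have $D_n(\tilde\theta_n) \leq 0$; combined with non-negativity of $D$, this yields the basic inequality
\[
0 \leq D(\tilde\theta_n) \leq (D - D_n)(\tilde\theta_n).
\]
The hypothesized quadratic lower bound $D(\phi) \geq C\sigma^{-2k}\rho(\phi, \theta)^2$ then reduces the problem to establishing a uniform bound of the form
\[
|(D_n - D)(\phi)| \lesssim \frac{\rho(\phi, \theta)}{\sigma\sqrt{n}}
\]
with high probability, over $\phi$ in the bounded subset of $\mc{L}$.

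The technical heart is controlling the log-likelihood ratio $\ell_\phi(x) := \log(f_\theta/f_\phi)(x)$. From the Gaussian-mixture form of $f_\theta$ and the $1$-Lipschitz property of log-sum-exp, I would derive the pointwise estimate
\[
|\ell_\phi(x)| \lesssim \sigma^{-2}\big(|\norm{\phi}^2 - \norm{\theta}^2| + \norm{x}\cdot\rho(\phi, \theta)\big),
\]
so that under $X \sim P_\theta$ one obtains $\mathrm{Var}_\theta[\ell_\phi(X)] \lesssim \rho(\phi, \theta)^2 / \sigma^2$, using $\mathbb{E}\norm{X}^2 = \norm{\theta}^2 + L\sigma^2$. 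A dyadic peeling argument over the shells $\{\phi \in \mc{L} \colon 2^{j-1}r \leq \rho(\phi, \theta) < 2^{j} r\}$, combined with Bernstein's inequality on each shell and a standard covering estimate in the finite-dimensional subspace $\mc{L}$, then delivers the required uniform concentration.

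Substituting the empirical-process bound into the basic inequality gives
\[
C\sigma^{-2k}\rho(\tilde\theta_n, \theta)^2 \lesssim \frac{\rho(\tilde\theta_n, \theta)}{\sigma\sqrt{n}},
\]
i.e.\ $\rho(\tilde\theta_n, \theta) \lesssim \sigma^{2k-1}/\sqrt{n}$ and so $\rho(\tilde\theta_n, \theta)^2 \lesssim \sigma^{4k-2}/n$. Integrating the Gaussian-type tails from Bernstein, and absorbing the (exponentially small) contribution from the event that the MLE escapes the localization radius by invoking the pointwise KL lower bound on the compact parameter set, upgrades this to the desired expectation bound $\mathbb{E}[\rho(\tilde\theta_n, \theta)^2] \leq \tilde C\,\sigma^{4k-2}/n$.

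The main obstacle will be organizing the peeling step so that the variance scaling $\rho(\phi,\theta)^2/\sigma^2$ is preserved uniformly in $\phi$. A naive Taylor expansion of $\ell_\phi$ around $\phi = \theta$ would suggest the tighter $\mathrm{Var}[\ell_\phi] \asymp \sigma^{-2k}\rho^2$, but this expansion is only valid locally; the Lipschitz-based bound $\rho(\phi,\theta)^2/\sigma^2$ is what holds \emph{uniformly} on the bounded parameter set, and balancing it against the quadratic KL lower bound $\sigma^{-2k}\rho^2$ is precisely what yields the $\sigma^{4k-2}/n$ rate asserted by the lemma.
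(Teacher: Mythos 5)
The paper does not supply a proof of this lemma; it is cited verbatim from \cite[Lemma~B.7]{BRW} and used as a black box, so there is no in-text argument to compare against. Your sketch follows the classical M-estimation route that underlies the cited result, and the structure is sound: the basic inequality $0\le D(\tilde\theta_n)\le (D-D_n)(\tilde\theta_n)$, the Lipschitz bound $|\ell_\phi(x)|\lesssim\sigma^{-2}\bigl(|\normnorm{\phi}^2-\normnorm{\theta}^2|+\normnorm{x}\,\rho(\phi,\theta)\bigr)$ yielding $\mathrm{Var}_\theta[\ell_\phi(X)]\lesssim\rho(\phi,\theta)^2/\sigma^2$, and a peeling argument over shells in the finite-dimensional subspace $\mc{L}$. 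The balance $\sigma^{-2k}\rho^2 \lesssim \rho/(\sigma\sqrt n)$ correctly delivers $\rho^2\lesssim\sigma^{4k-2}/n$, and your observation that the tighter Fisher-information variance $\asymp\sigma^{-2k}\rho^2$ is only local, so the cruder envelope $\rho^2/\sigma^2$ must carry the uniform bound, is precisely the right point. Two minor remarks if you flesh this out: the increments $\ell_\phi(X)-D(\phi)$ are in fact sub-Gaussian of parameter $O(\rho/\sigma)$ (because $\normnorm{X}$ concentrates around $\sigma\sqrt{L}$ with $O(\sigma)$ sub-Gaussian deviations), so Hoeffding suffices in place of Bernstein; and there is no separate ``escape'' event to absorb, since $\tilde\theta_n$ is constrained to the compact set $\mc{T}$, so the shells terminate at the diameter of $\mc{T}$ and the geometric decay of shell probabilities closes the expectation on its own.
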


\begin{lemma}\label{Uniform3}\cite[Lemma B.15]{BRW}
There exists a positive constant $C$ 
depending only on the dimension $L$ such that for any $\theta,\phi\in \R^L$ with $\norm{\theta} \leq 1$,
\[\bigg|\infdiv{P_\theta}{P_\phi} - \frac{1}{2}\norm{\theta - \phi}^2_H \bigg| \leq C \frac{\norm{\phi - \theta}^2}{\sigma^3}.\]
\end{lemma}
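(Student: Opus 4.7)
The plan is to establish the bound via a second-order Taylor expansion of $D(\phi)=\infdiv{P_\theta}{P_\phi}$ around $\phi=\theta$. First I would observe that $D(\theta)=0$, since the KL divergence vanishes when the two distributions coincide, and that $\nabla D(\theta)=0$ by the standard Fisher-score identity: differentiating under the expectation gives $\nabla_\phi D(\phi)\big|_{\phi=\theta} = -\mathbb{E}_{X\sim P_\theta}\bigl[\nabla_\phi \log f_\phi(X)\big|_{\phi=\theta}\bigr]$, and the expected score vanishes at the true parameter. Taylor's theorem with integral remainder then yields
\[
D(\phi) = \tfrac{1}{2}(\phi-\theta)^T H (\phi-\theta) + R(\phi), \qquad R(\phi) = \int_0^1 (1-t)\,(\phi-\theta)^T [H_D(\theta + t(\phi-\theta)) - H_D(\theta)](\phi-\theta)\,dt.
\]

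Next I would control the integrand via the Lipschitz modulus of the Hessian. A mean-value-type estimate gives $\opnorm{H_D(\theta + t(\phi-\theta)) - H_D(\theta)} \leq t\norm{\phi-\theta}\cdot \sup_{\xi\in[\theta,\phi]}\opnorm{D^{(3)}(\xi)}$, so after the $t$-integration $|R(\phi)| \leq \tfrac{1}{6}\norm{\phi-\theta}^3 \cdot \sup_\xi \opnorm{D^{(3)}(\xi)}$. Everything thus reduces to showing that the third-derivative tensor of $D$ has operator norm at most $C/\sigma^3$ uniformly over the relevant region. Since $\norm{\theta}\leq 1$ and $\phi$ is confined to a bounded region (so that $\norm{\phi-\theta}$ is bounded by a constant depending only on $L$), one extra factor of $\norm{\phi-\theta}$ may then be absorbed into the constant, yielding the claimed $C\norm{\phi-\theta}^2/\sigma^3$ bound.

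The main obstacle is the uniform operator-norm bound on $D^{(3)}$. Writing $D^{(3)}(\phi) = -\mathbb{E}_{X\sim P_\theta}\bigl[\nabla_\phi^3 \log f_\phi(X)\bigr]$ and noting that $f_\phi(x) = \mathbb{E}_G[g_\sigma(x - G\phi)]$ is a Gaussian mixture, the posterior distribution $\pi_{x,\phi}(G) \propto g_\sigma(x - G\phi)$ lets us express successive $\phi$-derivatives of $\log f_\phi$ as posterior cumulants of linear functionals in $G$. Each $\phi$-derivative of $g_\sigma$ introduces a factor $\sigma^{-2}(x - G\phi)$; under $X \sim P_\theta$ the residual $x - G\phi$ has typical $L^2$ size of order $\sigma$ because of the additive Gaussian noise, so each derivative of $\log f_\phi$ effectively contributes one factor of $\sigma^{-1}$ after averaging over $X$. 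Three derivatives combine to give the desired $O(\sigma^{-3})$ scaling. Converting this pointwise cumulant scaling into the required uniform operator-norm bound involves assembling the three-tensor of partial derivatives and applying Cauchy--Schwarz together with Gaussian moment estimates, using $\norm{\theta}\leq 1$ and boundedness of $\xi$ on the segment to keep the prefactors under control. The bookkeeping of posterior moments and the uniformity across $\xi$ is the delicate step; the rest is routine Taylor analysis.
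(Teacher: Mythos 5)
The paper cites this result from \cite{BRW} (Lemma~B.15) without reproducing a proof, so there is no internal argument to compare against; a second-order Taylor expansion is the natural route, and your plan matches it. Two remarks. First, the integral-remainder expansion together with a uniform bound $\opnorm{D^{(3)}(\xi)}\le C/\sigma^3$ on the segment $[\theta,\phi]$ gives $|R(\phi)|\le C'\normnorm{\phi-\theta}^3/\sigma^3$ --- a \emph{cubic}, not quadratic, dependence on $\normnorm{\phi-\theta}$. You notice this and propose absorbing a factor into the constant, but the lemma as stated places no bound on $\phi$, so that step is not justified by the hypotheses. In fact the statement appears to carry a typo: where the lemma is actually used, in the proof of Theorem~\ref{IntroductionMain4}, the cubic bound $B_1\normnorm{\tilde{\theta}_n-\theta}^3/\sigma^3$ is what is invoked, which is exactly what your Taylor argument delivers; the absorption step should simply be dropped. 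Second, the genuine content is the $O(\sigma^{-3})$ operator-norm bound on $D^{(3)}$, and your posterior-cumulant mechanism is the correct one: the quadratic part of $\log f_\phi$ has vanishing third derivative, so $\nabla_\phi^3\log f_\phi(x)$ equals the third cumulant tensor of the posterior law of $u:=G^\top x/\sigma^2$, a random vector with $\normnorm{u}\le\normnorm{x}/\sigma^2$ almost surely, uniformly in the tilting parameter $\phi$ (because $G$ is an isometry). Hence $\opnorm{\nabla_\phi^3\log f_\phi(x)}$ is at most a constant times $\normnorm{x}^3/\sigma^6$, and integrating against $P_\theta$ using $\normnorm{\theta}\le1$ gives $\E\normnorm{X}^3\le C_L\sigma^3$, so $\opnorm{D^{(3)}(\xi)}\le C_L/\sigma^3$ uniformly in $\xi$. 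The remaining issues you flag --- interchanging differentiation and expectation, and assembling the cumulant tensor from posterior moments --- are genuinely routine once the above almost-sure bound on $u$ is in hand, so there is no obstruction in your approach.
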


\begin{proof}[Proof of Theorem \ref{IntroductionMain4}] The key idea 
behind the proof is that for $n \to +\infty$, 
the dominant term in (\ref{IntroductionMain5}) is the first term, which is controlled by the local nature of the signals. Since collision 
free signals are very well-behaved locally (Proposition 
\ref{PreliminariesCollision3}), this allows us to obtain sharper uniform bounds on the rate of convergence of the restricted MLE.

We assume without loss of generality that $\rho(\tilde{\theta}_n ,\theta) = \normnorm{\tilde{\theta}_n - \theta}.$ Furthermore, 
since $\norm{\theta}$ is bounded, we also assume $\norm{\theta} = 1$ and $\sigma \geq 1$ by replacing $\theta$ and $\sigma$ by $\theta/\norm{\theta}$ and $\sigma/\norm{\theta}$ respectively.

Let $\delta$ be a fixed positive constant and let $A$ be a positive constant whose value may change from line to line. 
Define the event $\mathcal{E} := \{\rho(\tilde{\theta}_n,\theta) \leq \epsilon\}$, where $\epsilon$ is another positive constant whose exact value will be determined later. 
We decompose
\begin{equation}\label{Uniform4}\E_\theta\big[\rho(\tilde{\theta}_n,\theta)\big] = \E_\theta\big[\rho(\tilde{\theta}_n,\theta)\mathbbm{1}_\mathcal{E}\big] +
 \E_\theta\big[\rho(\tilde{\theta}_n,\theta)\mathbbm{1}_{\mathcal{E}^c}\big]
\end{equation}
and bound each term separately. Note that the first term entails the local behaviour 
of the restricted MLE while the second term entails the global behaviour of the restricted MLE.

\textbf{Local bound:} For the first term, let $B_1$ and $B_2$ denote the constants as in Lemma \ref{Uniform3}
and Proposition \ref{PreliminariesCollision3} respectively. 
We first choose $\delta$ sufficiently small such that 
\[\frac{\delta B_1}{B_2} \leq \frac{1}{2}.\]
Let $\sigma \in \R_{>0}$ be sufficiently large such that $s\delta\sigma^{-1} < \epsilon_0$ and choose $\epsilon = s\delta\sigma^{-1}$. By Lemma \ref{Uniform3} and Proposition \ref{PreliminariesCollision3}, we have that
\begin{align*}
\Big|D(\tilde{\theta}_n) - \frac{1}{2}\normnorm{\tilde{\theta}_n-\theta}^2_H\Big| 
&\leq B_1\frac{\normnorm{\tilde{\theta}_n - \theta}^3}{\sigma^3} 
\leq \frac{B_1 s \delta\normnorm{\tilde{\theta}_n-\theta}^2}{\sigma^{4}} \leq \frac{1}{2}D(\tilde{\theta}_n).
\end{align*}
Hence 
\[\frac{1}{3}\normnorm{\tilde{\theta}_n-\theta}_H^2 \leq D(\tilde{\theta}_n) \leq \normnorm{\tilde{\theta}_n-\theta}^2_H.\]
Together with Proposition \ref{PreliminariesCollision3}, we get
\begin{align}\label{Uniform5}
\normnorm{\tilde{\theta}_n - \theta}^2_H \geq A\sigma^{-4} s\normnorm{\tilde{\theta}_n- \theta}^2.
\end{align}
The function $D_n$ satisfy $D_n(\theta) = 0$ and attains its minimum value on $\mc{T}$ at 
$\tilde{\theta}_n$. As such, we must have $D_n(\tilde{\theta}_n) \leq 0.$ 
Furthermore, since $\normnorm{\tilde{\theta}_n - \theta} < \epsilon_0$, we also have that 
$\supp(\tilde{\theta}_n) = \supp(\theta)$. Expanding $D- D_n$ as a Taylor series about $\theta$, we obtain
\begin{align*}\frac{1}{3}\normnorm{\tilde{\theta}_n -\theta}^2_H &\leq D(\tilde{\theta}_n) - 
D_n(\tilde{\theta}_n) \\ 
&-\nabla D_n(\theta)^T (\tilde{\theta}_n -\theta) + \frac{1}{2}(\tilde{\theta}_n - \theta)^T \big(H_{D}(\eta) - H_{D_n}(\eta)\big)(\tilde{\theta}_n - \theta)
\end{align*}
where $\eta\in \R^d$ is a vector on the line segment between $\theta$ and $\tilde{\theta}_n$. 
We employ the dual norm $\norm{\cdot}_H^*$ to bound the first term and the operator norm $\norm{\cdot}_{\text{op}}$ to bound the second term. This gives
\begin{align*}
\frac{1}{3}\normnorm{\tilde{\theta}_n -\theta}^2_H \leq
\norm{\nabla D_n(\theta)}^*_H \normnorm{\tilde{\theta}_n -\theta}_H + 
\dfrac{1}{2}\normnorm{\tilde{\theta}_n -\theta}^2 \sup_{\phi \in \mathcal{B}_\epsilon} \opnorm{H_{D}(\phi)- H_{D_n}(\phi)} 
\end{align*}
where $\mathcal{B}_\epsilon := \big\{\phi\in\R^d \ : \ \rho(\phi,\theta) \leq \epsilon\big\}.$ Using (\ref{Uniform5}) and dividing by $\normnorm{\tilde{\theta}_n - \theta}_H$ throughout on both sides,
\begin{align*} \sigma^{-2}\sqrt{s}\normnorm{\tilde{\theta}_n-\theta}   &\leq  A\Big(\normnorm{ \nabla D_n(\theta)}^*_H + 
\frac{\sigma^2}{\sqrt{s}} 
\normnorm{\tilde{\theta}_n-\theta}\sup_{\phi \in \mathcal{B}_\epsilon} \opnorm{H_{D}(\phi)- H_{D_n}(\phi)}\Big).
\end{align*}
Multiplying both sides by $\sigma^2/\sqrt{s}$ and applying Young's inequality to the second term on the right-hand side yields
 \begin{align*}
\normnorm{\tilde{\theta}_n-\theta}  &\leq A\bigg(\frac{\sigma^{2}}{\sqrt{s}}\norm{\nabla D_n(\theta)}^*_H + 
\frac{\sigma^7}{s}
\sup_{\phi \in \mathcal{B}_\epsilon} \opnorm{H_{D}(\phi)- H_{D_n}(\phi)}^2
+ \frac{\sigma}{s} \normnorm{\tilde{\theta}_n-\theta}^2\bigg).
\end{align*} 
Taking expectation on both sides,
\begin{align}
\begin{split}\E_\theta\big[\rho(\tilde{\theta}_n,\theta)\mathbbm{1}_\mathcal{E}\big] &\leq  
\frac{A\sigma^2}{\sqrt{s}}\cdot\mathbb{E}_\theta\big[\norm{\nabla  D_n(\theta)}^*_H\big] + 
\frac{A\sigma^{7}}{s}\cdot\mathbb{E}_\theta\bigg[ 
\sup_{\phi \in \mathcal{B}_\epsilon} \opnorm{H_{D}(\phi)- H_{D_n}(\phi)}^2\bigg]
\\ &+ \frac{A\sigma}{s}\cdot \mathbb{E}_\theta\big[\rho(\tilde{\theta}_n,\theta)^2\big]. \label{Uniform6}
\end{split}
\end{align}
This concludes the local analysis of the restricted MLE.

\textbf{Global bound:} We first address the second term in (\ref{Uniform4}). Since $\rho(\tilde{\theta}_n,\theta) \geq  s\delta \sigma^{-1}$ on $\mc{E}^c$, we obtain 
\[\mathbb{E}_\theta\big[\rho(\tilde{\theta}_n,\theta)\mathbbm{1}_{\mathcal{E}^c}\big] \leq \frac{A\sigma}{s}\cdot 
\mathbb{E}_\theta\big[\rho(\tilde{\theta}_n,\theta)^2\big] \]which can be combined with the third term in (\ref{Uniform6}). This 
gives
\begin{align}
\begin{split}\label{Uniform7}
\E_\theta\big[\rho(\tilde{\theta}_n,\theta)\big] &\leq  
\frac{A\sigma^2}{\sqrt{s}}\cdot\mathbb{E}_\theta\big[\norm{\nabla  D_n(\theta)}^*_H\big] + 
\frac{A\sigma^{7}}{s}\cdot\mathbb{E}_\theta\bigg[ 
\sup_{\phi \in \mathcal{B}_\epsilon} \opnorm{H_{D}(\phi)- H_{D_n}(\phi)}^2\bigg]
\\ &+ \frac{A\sigma}{s}\cdot \mathbb{E}_\theta\big[\rho(\tilde{\theta}_n,\theta)^2\big].
\end{split}
\end{align}
We will establish an upper bound for each of the three terms in the above equation separately.

For the first term, Bartlett's identities state that for each $1\leq i\leq n$,
\begin{align*}\mathbb{E}_\theta\big[\nabla \log f_\phi(X_i)|_{\phi = \theta}\big] &= 0  \\
\mathbb{E}_\theta\big[(\nabla \log f_\phi(X_i)|_{\phi = \theta})
(\nabla \log f_\phi(X_i)|_{\phi = \theta})^T\big] &= H_D(\theta) = H .
\end{align*}
Since $\nabla D_n(\theta) = \displaystyle -\frac{1}{n}\sum_{i=1}^n \nabla \log f_\phi(X_i)\big|_{\phi = \theta}$ and the $X_i$'s are independent, we get
\[\mathbb{E}_\theta\big[\nabla D_n(\theta) \nabla D_n(\theta)^T\big] = \frac{1}{n} H.\]
In particular, the nullspace of $H$ is contained in the nullspace of $\nabla D_n(\theta) \nabla D_n(\theta)^T$ almost surely since any vector $v$ lying in the nullspace of $H$ satisfies
\[0 \leq \mathbb{E}_\theta\big[v^T
\nabla D_n(\theta) \nabla D_n(\theta)^T v\big] = \frac{1}{n} v^T H v = 0.\]
Since the row space of a symmetric matrix is the orthogonal complement of its nullspace, this means that the row space of $\nabla D_n(\theta) \nabla D_n(\theta)^T$ (and hence the row space of $\nabla D_n(\theta))$ is contained in the row space of $H$ almost surely. As a result,
\begin{align}\mathbb{E}_\theta\big[\norm{\nabla D_n(\theta)}^*_H\big] 
&= \mathbb{E}_\theta\Big[\sqrt{\nabla D_n(\theta)^T H^\dagger \nabla D_n(\theta)}\Big] \nonumber\\ &\leq 
\sqrt{\mathbb{E}_\theta\big[\text{Tr}\big(\nabla D_n(\theta)^T H^\dagger \nabla D_n(\theta)\big)\big]} \nonumber\\ 
&= \sqrt{\frac{1}{n} \text{Tr}(H^\dagger H)} \leq \sqrt{\frac{s}{n}} \label{Uniform8},
\end{align}
where $H^\dagger$ denotes the Moore-Penrose inverse of $H$. For the second term, we invoke Lemma \ref{Uniform1} to get
\begin{equation}\label{Uniform9}\mathbb{E}_\theta\bigg[ 
\sup_{\phi \in \mathcal{B}_\epsilon} \opnorm{H_{D}(\phi)- H_{D_n}(\phi)}^2\bigg] \leq 
A\frac{\log n}{n\sigma^4}.
\end{equation}
Finally, for the third term, we invoke Lemma \ref{Uniform2}
with the weaker global bound in Proposition \ref{PreliminariesGlobal1} (i.e. $k= 3$). We obtain
\begin{equation}\label{Uniform10}
\mathbb{E}_\theta\big[\rho(\tilde{\theta}_n,\theta)^2\big] \leq A_s \frac{\sigma^{10}}{n}
\end{equation}
for some positive constant $A_s$ depending on $s$.
Putting (\ref{Uniform8}),\  (\ref{Uniform9}) and (\ref{Uniform10}) back into (\ref{Uniform7}), we have
\[\mathbb{E}\big[\rho(\tilde{\theta}_n,\theta)\big] \leq A\frac{\sigma^2}{\sqrt{n}} + A\frac{\sigma^3 \log n}{sn} + 
A_s \frac{\sigma^{11}}{sn}.\]
The conclusion follows.
\end{proof}

% \pagebreak

\section{Lower Bound for Collision Free Signals}\label{Lower}

In this section, we prove Theorem \ref{IntroductionMain6}.

\begin{proof} For convenience of notation, we assume that $s$ is even. The proof for the case in which 
$s$ is odd is conceptually similar but notationally more complicated. By Le Cam's two point argument, for any $\phi,\psi\in \mc{T}_s$, we have that
\[\inf_{\hat{\theta}_n} \sup_{\theta\in \mc{T}_s} \mathbb{E}_{\theta} \big[\rho(\hat{\theta}_n,\theta)\big] \geq \rho(\phi,\psi)\cdot\left(\frac{2-\sqrt{2n\infdiv{P_\phi}{P_\psi}}}{8}\right)\]
where the infimum is taken over all estimators $\hat{\theta}_n$ on $n$ samples. Thus it suffices to construct two signals $\theta_n,\phi\in \mc{T}_s$ such that 
\[\rho(\phi,\theta_n) \geq M \min\bigg\{\frac{\sigma^{2}}{\sqrt{n}},\ 1 \bigg\} \]
for some universal constant $M\in \R_{>0}$ but $\infdiv{P_\phi^n}{P^n_{\theta_n}} = n\infdiv{P_{\phi}}{P_{\theta_n}} \leq 1/2$. 
Let $\{i_1,i_2,\cdots,i_s\}\subseteq \{1,\cdots,L\}$ be such that each element in the multiset 
\[\big\{i_j - i_k \ : \ 1\leq j,k \leq s \text{ with } j \neq k\big\}\]
appears with multiplicity exactly $1$. Define
\[\phi_j &:= \begin{cases}
(-1)^k\dfrac{1}{\sqrt{s}} & \text{ if } j = i_k \text{ for } 1 \leq k \leq s \\
0 & \text{ otherwise} 
\end{cases} \\[2mm]
(\theta_n)_j &:= \begin{cases}
(-1)^k\dfrac{1}{\sqrt{s}} & \text{ if } j = i_k \text{ for } 1 \leq k \leq s - 2 \\[3mm]
-\dfrac{1}{\sqrt{s}} - \delta & \text{ if } j = i_k \text{ for } k = s-1 \\[3mm]
\dfrac{1}{\sqrt{s}} + \delta & \text{ if } j = i_k \text{ for } k = s \\[3mm]
0 & \text{ otherwise} 
\end{cases}\]
where $\displaystyle\delta = c\min\bigg\{\frac{\sigma^2}{\sqrt{n}}, 1\bigg\}$ 
for some universal constant $c$ to be specified.

Note that $\mathbb{E}[G\phi ] = \mathbb{E}[G\theta] = 0$ and $\rho(\phi,\theta_n) = 2\delta \leq 1/3$ for $c < 1/6$. Since both $\phi$ and $\theta_n$ 
are mean zero signals, with $\norm{\phi} = 1$, we can invoke Theorem \ref{PreliminariesKullback2} (with $k = 2$) to obtain
\[\infdiv{P_\phi}{P_{\theta_n}} \leq \overline{C} \frac{\rho(\theta,\phi)^2}{\sigma^{4}} \leq \frac{1}{2n}\]
by choosing $c < \dfrac{1}{2\overline{C}}$.
\end{proof}

% \pagebreak

\section{Concentration of the Maximum Likelihood Estimator}\label{Concentration}

In this section, we fix a collision free signal $\theta\in \mc{T}$ that plays the role of 
the true signal which we wish to approximate. The goal of this section is to prove 
Theorem \ref{IntroductionMain3}.

Recall that for i.i.d samples 
$X_1,\cdots,X_n$ drawn according to $P_\theta$, the restricted MLE $\tilde{\theta}_n$ is the minimizer of the negative log-likehood
\begin{align*}
R_n(\phi) :&= -\frac{1}{n}\sum_{i=1}^n \log f_\phi (X_i) \\
&= \frac{L}{2}\log 2\pi \sigma^2 + \frac{1}{n} \sum_{i=1}^n 
\bigg(\frac{\norm{G\theta + \sigma\xi_i}^2 + \norm{\phi}^2}{2\sigma^2} \bigg) 
- \log \mathbb{E}_G\bigg[\exp\bigg(\frac{(G\theta + \sigma \xi_i)^T G\phi}{\sigma^2} \bigg) \bigg]\bigg). 
\end{align*}
on $\mc{T}$. Since $G\theta + \sigma \xi$ is equal in distribution to $G(\theta + \sigma \xi)$, we obtain
\begin{align}
R_n(\phi) \stackrel{d}{=} 
\frac{L}{2}\log 2\pi \sigma^2 + \frac{1}{n} \sum_{i=1}^n 
\bigg(\frac{\norm{\theta + \sigma\xi_i}^2 + \norm{\phi}^2}{2\sigma^2} \bigg) 
- \log \mathbb{E}_G\bigg[\exp\bigg(\frac{(\theta + \sigma \xi_i)^T G\phi}{\sigma^2} \bigg) \bigg]\bigg) \label{Concentration1}.
\end{align}
In particular, $R_n(\phi)$ does not depend on the randomness of the group action. 
Thus we let $R(\phi) = \mathbb{E}[R_n(\phi)]$ denote its mean with respect to the Gaussian noise $\xi_1,\cdots,\xi_n$.

\begin{lemma}\label{Concentration2} Let $K,\delta\in \R_{>0}$ be fixed constants. There exists a $\delta$-net $N_\delta$ (with 
respect to the Euclidean norm on $\R^L$) for the subset 
\[S := \Big\{\phi\in \R^L \ : \ |\text{supp}(\phi)| \leq s \text{ and } \norm{\phi} \leq K \Big\}\]
satisfying $|N_\delta| \leq L^s\bigg(\dfrac{3K}{\delta}\bigg)^s$.
\end{lemma}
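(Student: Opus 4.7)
The plan is to construct the $\delta$-net by first decomposing $S$ according to the support of $\phi$ and then covering each slice separately by a standard volumetric argument.

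First, I would observe that $S = \bigcup_{I} S_I$, where the union is over all subsets $I \subseteq \{1,\dots,L\}$ of size at most $s$, and
\[
S_I := \big\{\phi \in \R^L \ : \ \supp(\phi) \subseteq I,\ \norm{\phi} \leq K\big\}.
\]
Since the number of such index sets $I$ is bounded by $\sum_{k=0}^{s}\binom{L}{k} \leq L^s$ (absorbing the trivial case $I = \emptyset$), it suffices to produce, for each fixed $I$, a $\delta$-net $N_{\delta,I}$ for $S_I$ of cardinality at most $(3K/\delta)^s$; the union $N_\delta := \bigcup_I N_{\delta,I}$ is then a $\delta$-net for $S$ satisfying the claimed bound.

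For each fixed $I$, the set $S_I$ is isometric to a Euclidean ball of radius $K$ in $\R^{|I|}$ with $|I| \leq s$. I would then invoke the standard volumetric covering bound: a maximal $\delta$-separated subset $N_{\delta,I}$ of $S_I$ is automatically a $\delta$-net, and the open balls of radius $\delta/2$ centered at the points of $N_{\delta,I}$ are disjoint and all contained in the enlarged ball of radius $K + \delta/2$. Comparing volumes in $\R^{|I|}$ yields
\[
|N_{\delta,I}| \cdot \Big(\tfrac{\delta}{2}\Big)^{|I|} \leq \Big(K + \tfrac{\delta}{2}\Big)^{|I|},
\]
so $|N_{\delta,I}| \leq (2K/\delta + 1)^{|I|} \leq (3K/\delta)^s$, with the last inequality being trivial if $\delta \leq 2K$ and easily handled (with a net of size $1$) if $\delta > 2K$.

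Combining the two ingredients gives $|N_\delta| \leq L^s (3K/\delta)^s$, as desired. There is no real obstacle here; the argument is entirely routine once one recognizes that the sparsity constraint yields a union-of-subspaces structure that reduces the covering problem to $L^s$ copies of a ball in an ambient space of dimension at most $s$, rather than a single ball in $\R^L$ (which would give the much weaker bound $(3K/\delta)^L$). The only minor care needed is to make the volumetric constant work uniformly in $|I|$, which the choice $(3K/\delta)^s$ accommodates.
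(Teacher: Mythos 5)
Your proposal is correct and follows essentially the same route as the paper: decompose $S$ by support into slices lying in coordinate subspaces of dimension at most $s$, cover each slice with the standard $(1 + 2K/\delta)^{\dim} \le (3K/\delta)^s$ ball-covering bound, and take the union. The one cosmetic difference is that you enumerate supports of size at most $s$ and bound their number by $\sum_{k=0}^{s}\binom{L}{k}$, whereas the paper fixes exactly $s$ coordinates (allowing $\phi$ to vanish on some of them) and uses $\binom{L}{s} \le L^s$ directly; your bound $\sum_{k=0}^{s}\binom{L}{k}\le L^s$ actually fails for very small $L$ (e.g.\ $L=2$, $s=1$), so the paper's formulation is the cleaner one, though in the regime of interest ($s\ge 7$, $L$ large) this is harmless and the fix is immediate.
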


\begin{proof} For each tuple of indices $(i_1,\cdots,i_s)$ with $1 \leq i_1 < i_2 < \cdots < i_s \leq L$, define the set 
\[S_{i_1,\cdots,i_s} := 
\Big\{\phi\in \R^L \ : \ \text{supp}(\phi) \subseteq \{i_1,\cdots,i_s\} \text{ and } \norm{\phi} \leq K \Big\}.\]
By viewing $S_{i_1,\cdots,i_s}$ as a subset of the $s$-dimensional linear subspace
\[L_{i_1,\cdots,i_s} = 
\Big\{\phi\in \R^L \ : \ \text{supp}(\phi) \subseteq \{i_1,\cdots,i_s\} \Big\},\]
we see that there exists a $\delta$-net $N^{(i_1,\cdots,i_s)}_\delta$ for $S_{i_1,\cdots,i_s}$ 
satisfying $|N^{(i_1,\cdots,i_s)}_\delta| 
\leq (3K/\delta)^s.$ Define $N_\delta$ to be the union of 
the $N_\delta^{(i_1,\cdots,i_s)}$'s.
Since there are ${L}\choose{s}$ possible choices for $i_1,\cdots, i_s$, we have that
\[|N_\delta| \leq \begin{pmatrix} L \\ s \end{pmatrix} 
\bigg(\frac{3K}{\delta}\bigg)^s \leq L^s\bigg(\frac{3K}{\delta}\bigg)^s.\]
\end{proof}

\begin{lemma}\label{Concentration3} 
There exists a universal constant $c\in \R_{>0}$ and a constant $C_{s}\in \R_{>0}$ depending on $s$ such that 
for any $K,t\in \R_{>0}$, 
\[P\bigg(\sup_{\phi \in \mc{T} : \norm{\phi} \leq K} |&R_n(\phi) - R(\phi)| > 4t \bigg) \\[1mm]
&\leq  C_{s} \sigma^{-s} K^s (t^{-s} + t^{-s/2}) \exp\bigg(-  \frac{cn\sigma^2 t^2}{K^2}\bigg)
+  C_{s}\exp\big(-cn \min\{t, t^2\}\big).\]
\end{lemma}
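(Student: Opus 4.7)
The strategy is to decompose the excess negative log-likelihood into a $\phi$-independent piece (handled by Bernstein's inequality) and a $\phi$-dependent piece (handled by a covering-net argument combined with Gaussian Lipschitz concentration). Using the representation (\ref{Concentration1}), write $R_n(\phi) - R(\phi) = A_n - B_n(\phi)$, where
\[A_n := \frac{1}{n}\sum_{i=1}^n \frac{\norm{\theta+\sigma\xi_i}^2}{2\sigma^2} - \mathbb{E}\bigg[\frac{\norm{\theta+\sigma\xi_1}^2}{2\sigma^2}\bigg]\]
does not depend on $\phi$, and
\[B_n(\phi) := \frac{1}{n}\sum_{i=1}^n\bigl(f_i(\phi) - \mathbb{E}[f_i(\phi)]\bigr),\qquad f_i(\phi) := \log \mathbb{E}_G\bigg[\exp\bigg(\frac{(\theta + \sigma\xi_i)^T G\phi}{\sigma^2}\bigg)\bigg].\]
Since $\sup_\phi|R_n(\phi) - R(\phi)| \leq |A_n| + \sup_\phi|B_n(\phi)|$, the two pieces are handled separately.

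The term $A_n$ expands as $(1/n)\sum[\theta^T\xi_i/\sigma + (\norm{\xi_i}^2 - L)/2]$, a centered sum of i.i.d.\ sub-exponential variables (Gaussian linear plus shifted $\chi^2$). Bernstein's inequality therefore yields $P(|A_n| > t) \leq 2\exp(-cn\min\{t,t^2\})$, which accounts for the second summand of the claimed bound.

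For $\sup_\phi |B_n(\phi)|$ I first observe that $\nabla_\xi f_i(\phi)$ is a $G$-weighted average of $G\phi/\sigma$, whose norm is at most $\norm{\phi}/\sigma \leq K/\sigma$, so $\xi \mapsto f_i(\phi,\xi)$ is uniformly $(K/\sigma)$-Lipschitz. By Gaussian Lipschitz concentration, $f_i(\phi) - \mathbb{E}[f_i(\phi)]$ is sub-Gaussian with variance proxy $K^2/\sigma^2$, and averaging over $n$ samples gives the pointwise bound $P(|B_n(\phi)| > t) \leq 2\exp(-cnt^2\sigma^2/K^2)$. Applying Lemma \ref{Concentration2} to construct a $\delta$-net $N_\delta$ with $|N_\delta| \leq L^s(3K/\delta)^s$ and union-bounding gives $P(\sup_{N_\delta} |B_n(\phi)| > t) \leq 2L^s(3K/\delta)^s \exp(-cnt^2\sigma^2/K^2)$. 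To pass from $N_\delta$ to the full set, note that $\norm{\nabla_\phi f_i(\phi)} \leq \norm{\theta+\sigma\xi_i}/\sigma^2$ (since $G^T$ is an isometry), so the Lipschitz constant of $B_n$ in $\phi$ is bounded by $L_n := (1/n)\sum\norm{\theta+\sigma\xi_i}/\sigma^2$ plus its mean. Viewed as a function of $(\xi_1,\ldots,\xi_n)$, $L_n$ is itself $O(1/(\sqrt{n}\sigma))$-Lipschitz with mean of order $1/\sigma$, so Gaussian concentration controls its fluctuations.

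The two polynomial factors $t^{-s}$ and $t^{-s/2}$ in the first summand arise from two regimes of the bridging step. On the typical event $\{L_n \lesssim 1/\sigma\}$ I take $\delta \asymp t\sigma$, yielding $|N_\delta| \lesssim L^s(K/(t\sigma))^s$ and contributing $(K/\sigma)^s t^{-s}$. On the complementary event, where $L_n$ is of larger order $\sqrt{t}/\sigma$, a coarser choice $\delta \asymp \sigma\sqrt{t}$ is forced, contributing $(K/\sigma)^s t^{-s/2}$; the probability of this event is of order $\exp(-cn\min\{t,t^2\})$ and is absorbed into the second summand. The main obstacle is the delicate coordination of three sources of randomness---the sub-Gaussian pointwise fluctuations of $f_i(\phi)$, the $\delta^{-s}$-scale cardinality of the net, and the random Lipschitz constant $L_n$ in the bridging step---so as to preserve the exponential factor $\exp(-cn\sigma^2 t^2/K^2)$ in both regimes while obtaining precisely the prefactor $t^{-s} + t^{-s/2}$.
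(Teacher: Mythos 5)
Your proposal follows essentially the same route as the paper: the same decomposition of $R_n(\phi)-R(\phi)$ into a $\phi$-independent piece and the log-MGF piece $\mathrm{II}(\phi)$, the same Gaussian-Lipschitz argument in $\xi$ yielding pointwise sub-Gaussianity with variance proxy $\|\phi\|^2/\sigma^2$, the same $\delta$-net from Lemma~\ref{Concentration2}, and the same bridging via a Lipschitz bound in $\phi$ controlled by $\frac{1}{n\sigma^2}\sum_i\|\theta+\sigma\xi_i\|$. The chi-square/Gaussian tail bound for the $\phi$-free term is what you call Bernstein; same thing.

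The one place your plan is a little looser than what is actually needed concerns the origin of the $t^{-s}+t^{-s/2}$ factor. You propose to condition separately on two events defined by the magnitude of the random Lipschitz constant $L_n$, choosing a finer or coarser net in each. The paper instead defines a single event $\mathcal{E}=\{|\mathrm{I}-\mathbb{E}[\mathrm{I}]|<2t\}$ which, via Cauchy--Schwarz on $(\ref{Concentration6})$, \emph{automatically} forces $L_n\leq \sqrt{2L+4t}/\sigma$; it then makes a single $t$-adaptive choice $\delta=t\sigma/(4\sqrt{L+t})$, which interpolates between $\delta\asymp t\sigma$ (small $t$, giving $t^{-s}$) and $\delta\asymp\sigma\sqrt{t}$ (large $t$, giving $t^{-s/2}$). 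The split is thus in the deterministic parameter $t$, not in the sample space, and the event $\mathcal{E}$ does double duty, controlling $|A_n|$ and the Lipschitz constant simultaneously. Your version needs care: the threshold on $L_n$ cannot be $1/\sigma$ vs.\ $\sqrt t/\sigma$ (the typical value of $\frac1{n\sigma^2}\sum\|\theta+\sigma\xi_i\|^2$ is already $\approx L+\|\theta\|^2/\sigma^2$, not $O(1)$), so the threshold must carry the $L$-offset, after which your two-regime argument collapses to the paper's single choice of $\delta$. This is a presentational rather than mathematical difference; once tightened, the two proofs coincide.
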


\begin{proof} In what follows, let $c$ and $C_s$ be constants whose value may change from line to line. 
Using (\ref{Concentration1}), for any $\phi \in \R^{d}$, we may write
\[R_n(\phi) = \RN{1} - \RN{2}(\phi) + \text{const}(\phi)\]
where
\[\RN{1} &= \frac{1}{n}\sum_{i=1}^n \frac{1}{2\sigma^2} \normnorm{\theta + \sigma\xi_{i}}^2 \\[1mm]
\RN{2}(\phi) &= \frac{1}{n}\sum_{i=1}^n f(\xi_i,\phi) := 
\frac{1}{n}\sum_{i=1}^n \log \mathbb{E}_G\bigg[  \exp\bigg(\frac{(\theta 
+ \sigma\xi_i)^T G \phi }{ \sigma^2}\bigg) \bigg]  \]
and $\text{const}(\phi)$ is a term not depending on the randomness of 
the $\xi_i$'s. We analyse the concentration of $\RN{1}$ 
and $\RN{2}(\phi)$ separately. Define the event $\mathcal{E} = \{ | \RN{1} - \mathbb{E}[\RN{1}]| < 2t \}$. 
First observe that for each $1\leq i \leq n$,
\[\frac{1}{2\sigma^2} \normnorm{\theta + \sigma\xi_i}^2 
= \frac{1}{2\sigma^2}\big(\norm{\theta}^2 + 2\sigma\theta^T \xi_i 
+ \sigma^2\normnorm{\xi_i}^2\big) .\]
The first term is deterministic while the distributions of the second and third terms are given by
\[\frac{1}{\sigma n} \sum_{i=1}^n \theta^T \xi_i 
\sim \mc{N}\bigg(0, \frac{1}{\sigma^2n} \norm{\theta}^2\bigg)\ \ \ \ \ \ \ 
\text{ and } \ \ \ \ \ \ \
\sum_{i=1}^n \normnorm{\xi_{i}}^2 \sim \chi_{nL}^2.\]
By Gaussian tail bounds, we have that
\[P\Bigg( \bigg| 
\frac{1}{\sigma n} \sum_{i=1}^n \theta^T\xi_i  \bigg| 
\geq t \Bigg)  \leq 2\exp\bigg( -\frac{n\sigma^2 t^2}{2 \normnorm{\theta}^2}\bigg).\]
For the chi-square random variable, we will invoke the following well-known chi-square tail bound. 
\begin{theorem}\label{Concentration4}
\cite[Lemma 1]{Chi-square}. 
Let $X \sim \chi^2_k$ be a chi-squared distributed random variable with $k$ degrees of freedom. Then for any $x\in \R$,
\begin{align*}P(X - k \geq 2\sqrt{kx} + 2x) &\leq \exp(-x) \\
P(k - X \geq 2\sqrt{kx}) &\leq \exp(-x).
\end{align*}
\end{theorem}

Observe that
\[P\bigg( \frac{1}{2n}\sum_{i=1}^n \normnorm{\xi_i}^2 - 
\frac{L}{2} \geq t\bigg) &=
P\bigg(\sum_{i=1}^n \normnorm{\xi_i}^2 - 
Ln \geq 2nt\bigg).\]
Let $x\in\R_{>0}$ be such that $\sqrt{nL x} + x = nt.$ If $x \geq \dfrac{nt}{2}$, then 
$n\sqrt{2Lt} + nt \leq 2nt$ and so
\[P\bigg(\sum_{i=1}^n \normnorm{\xi_i}^2 - 
Ln \geq 2nt\bigg) \leq 
P\bigg(\sum_{i=1}^n \normnorm{\xi_i}^2 - 
Ln \geq  n\sqrt{2Lt} + nt\bigg) \leq \exp\bigg( - \frac{nt}{2}\bigg).\]
On the other hand, if $\sqrt{nLx} \geq \dfrac{nt}{2}$, then $x \geq \dfrac{nt^2}{4L}$ and so 
$nt + \dfrac{nt^2}{2L} \leq 2nt$. Thus
\[P\bigg(\sum_{i=1}^n \normnorm{\xi_i}^2 - 
Ln \geq 2nt\bigg) \leq 
P\bigg(\sum_{i=1}^n \normnorm{\xi_i}^2 - 
Ln \geq  nt + \frac{nt^2}{2L} \bigg) \leq \exp\bigg( - \frac{nt^2}{4L}\bigg).\]
Next, we have that
\[ 
P\bigg(\frac{L}{2} - \frac{1}{2n}\sum_{i=1}^n \normnorm{\xi_i}^2  \geq t\bigg)  
&=  P\bigg(Ln - \sum_{i=1}^n \normnorm{\xi_i}^2 \geq 2nt\bigg)  
\leq \exp\bigg( - \frac{nt^2}{L}\bigg).\]
To summarise, we have the following bound
\[P\Bigg( \bigg|\frac{1}{2n}\sum_{i=1}^n \norm{\xi_i}^2 - 
\frac{L}{2} \bigg|\geq t\Bigg)  \leq 2\exp\big(- cn \min\{t, t^2\}\big)\]
This implies that
\begin{align}
P(\mathcal{E}^c) &\leq  2\exp\bigg( -\frac{n\sigma^2 t^2}{2 \norm{\theta}^2}\bigg) 
+2\exp\big(- cn \min\{t, t^2\}\big) \notag \\[1mm]
&\leq C_s\exp\big(-c n \min\{t, t^2\}\big) \label{Concentration5}\end{align}
where we have used the fact that $\norm{\theta}\leq \sigma$ for the last inequality.

On the event $\mathcal{E}$, we have $|R_n(\phi) - R(\phi)| \leq 2t + |\RN{2}(\phi) - \mathbb{E}[\RN{2}(\phi)]|$ as well as
\begin{equation}\label{Concentration6}
\frac{1}{n}\sum_{i=1}^n \normnorm{\theta + \sigma\xi_i}^2   
\leq \mathbb{E}\big[\normnorm{\theta + \sigma\xi_i}^2] + 
4t\sigma^2
= \norm{\theta}^2 + (L + 4t)\sigma^2 \leq 
(2L + 4t)\sigma^2.
\end{equation}
We now control the concentration of $\RN{2}(\phi)$. 
Differentiating $f(\xi,\phi)$ 
with respect to $\xi$, we obtain
\[\norm{\nabla_\xi f(\xi,\phi)} = 
\norm{\frac{\mathbb{E}_G\big[\exp(\frac{(\theta + \sigma\xi)^T G\phi}{\sigma^2}) \cdot \frac{1}{\sigma} G\phi\big]}{
\mathbb{E}_G\big[\exp(\frac{(\theta + \sigma\xi)^T G\phi }{\sigma^2})\big]}} 
\leq   \frac{\mathbb{E}_G\big[\exp(\frac{(\theta + \sigma\xi)^T G\phi }{\sigma^2}) \cdot \frac{1}{\sigma} \norm{\phi}\big]}{
\mathbb{E}_G\big[\exp(\frac{(\theta + \sigma\xi)^T G\phi }{\sigma^2})\big]}
= \frac{\norm{\phi}}{\sigma}.\]
Thus $f(\xi,\phi)$ is $\frac{\norm{\phi}}{\sigma}$-Lipschitz in $\xi$. By 
the following 
Gaussian concentration result:

\begin{theorem}\label{Concentration8}\cite[Theorem 5.5]{Concentration} 
Let $X \sim \mc{N}(0,I_n)$ be a standard 
Gaussian vector. Let $L \in \R_{>0}$ and let $f : \R^n \to \R$ be an $L$-lipschitz function. Then for all $\lambda\in \R$,
\[\log \mathbb{E}[e^{\lambda(f(X)- \mathbb{E}[f(X)])} \leq \frac{\lambda^2}{2}L^2.\]
\end{theorem}

for a fixed $\phi\in \R^{d}$, the random variable $f(\xi,\phi) - 
\mathbb{E}[f(\xi,\phi)]$ is $\frac{\norm{\phi}^2}{\sigma^2}$-subgaussian. 
Hoeffding's inequality then yields
\[P\big( |\RN{2}(\phi) - \mathbb{E}[\RN{2}(\phi)] | > t\big) 
&= P\bigg(\frac{1}{n}\bigg|\sum_{i=1}^n f(\xi_i , \phi) - \mathbb{E}[f(\xi_i,\phi)] \bigg|\geq t\bigg) \\[1mm]
&= P\bigg(\bigg|\sum_{i=1}^n f(\xi_i , \phi) - \mathbb{E}[f(\xi_i,\phi)] \bigg|\geq n t\bigg) \\[1mm]
&\leq 2\exp\bigg( - \frac{n\sigma^2 t^2}{2\norm{\phi}^2} \bigg).
\] 
Now set $\delta = \dfrac{t\sigma}{4\sqrt{L + t}}$ and let 
$N_\delta$ be a $\delta$-net of $\{\theta \in\mc{T} \ : \ \norm{\theta} \leq M\}$ having cardinality
\[|N_\delta| \leq L^s \bigg(\frac{12K\sqrt{L+t}}{t\sigma}\bigg)^s. \]
Next, the gradient of $\mathbb{E}_{\xi}[f(\xi,\phi)]$ when differentiating with respect to $\phi$ is given by
\[\norm{\nabla_\phi \mathbb{E}_{\xi}[ f(\xi,\phi)]} 
&= \norm{\mathbb{E}_\xi\Bigg[
\frac{\mathbb{E}_G\big[\exp(\frac{(G^T(\theta + \sigma\xi))^T\phi }{\sigma^2}) \cdot \frac{G^T(\theta + \sigma\xi)}{\sigma^2}\big]}{
\mathbb{E}_G\big[\exp(\frac{(G^T(\theta + \sigma\xi))^T\phi }{\sigma^2})\big]}\Bigg]} \\[2mm] 
&\leq \mathbb{E}_\xi\Bigg[
\frac{\mathbb{E}_G\big[\exp(\frac{(G^T(\theta + \sigma\xi))^T\phi }{\sigma^2}) \cdot \normnorm{\frac{\theta + \sigma\xi}{\sigma^2}}\big]}{
\mathbb{E}_G\big[\exp(\frac{(G^T(\theta + \sigma\xi))^T \phi }{\sigma^2})\big]}\Bigg] 
= \frac{1}{\sigma^2}\mathbb{E}_\xi[\norm{\theta + \sigma\xi}].  \]
This can be upper bounded by
\[\frac{1}{\sigma^2}\mathbb{E}_\xi[\norm{\theta + \sigma\xi}] \leq 
\frac{\norm{\theta} + \sigma \mathbb{E}[\norm{\xi}]}{\sigma^2} \leq 
\frac{\sigma + \sigma \sqrt{\mathbb{E}[\normnorm{\xi}^2]}}{\sigma^2} = \frac{2\sqrt{L}}{\sigma}.\]
On the other hand, on the event $\mc{E}$, for a fixed $\xi\in \R^{d}$, 
the gradient of $\RN{2}(\phi)$ is given by
\[\norm{\nabla_\phi\RN{2}(\phi)} 
&= \norm{\frac{1}{n}\sum_{i=1}^n \Bigg[
\frac{\mathbb{E}_G\big[\exp(\frac{(G^T(\theta + \sigma\xi_i))^T\phi }{\sigma^2}) \cdot \frac{G^T(\theta + \sigma\xi_i)}{\sigma^2}\big]}{
\mathbb{E}_G\big[\exp(\frac{(G^T(\theta + \sigma\xi_i))^T \phi }{\sigma^2})\big]}\Bigg]} \\[2mm] 
&\leq \frac{1}{n}\sum_{i=1}^n \Bigg[
\frac{\mathbb{E}_G\big[\exp(\frac{(G^T(\theta + \sigma\xi_i))^T\phi }{\sigma^2}) \cdot \normnorm{\frac{\theta + \sigma\xi_i}{\sigma^2}}\big]}{
\mathbb{E}_G\big[\exp(\frac{(G^T(\theta + \sigma\xi_i))^T \phi }{\sigma^2})\big]}\Bigg] 
= \frac{1}{\sigma^2} \cdot \frac{1}{n} \sum_{i=1}^n \norm{\theta + \sigma\xi_i}.  \]
This can be upper bounded by
\[\frac{1}{\sigma^2}\cdot \frac{1}{n}\sum_{i=1}^n\normnorm{
\theta + \sigma\xi_i} 
\leq  \frac{1}{\sigma^2} \sqrt{\frac{1}{n}\sum_{i=1}^n\normnorm{
\theta + \sigma\xi_i}^2} \leq \frac{\sqrt{2L + 4t}}{\sigma}\]
where the last inequality follows from $(\ref{Concentration6})$. 
This means that on the event $\mc{E}$, the function 
$\RN{2}(\phi) - \mathbb{E}[\RN{2}(\phi)]$ (for each fixed $\xi$)
is Lipschitz in $\phi$, with Lipschitz constant at most
\[\frac{2\sqrt{L}}{\sigma} + \frac{\sqrt{2L+4t}}{\sigma} \leq 
\frac{4\sqrt{L + t}}{\sigma} = \frac{t}{\delta}.\]
It then follows that 
\[P\bigg(\sup_{\phi\in\mc{T} : \norm{\phi} \leq K } | R_N(\phi) - R(\phi)| > 4t \text{ and } \mc{E} \bigg) 
&\leq P\bigg(\sup_{\phi\in\mc{T} : \norm{\phi} \leq K } |\RN{2}(\phi) - \mathbb{E}[\RN{2}(\phi)]| > 
2t \text{ and } \mc{E} \bigg) \\
&\leq 
P\bigg(\sup_{\phi \in N_\delta } |\RN{2}(\phi) - \mathbb{E}[\RN{2}(\phi)]| > 
t\bigg) \\
&\leq 2|N_\delta|\exp\bigg( - \frac{n\sigma^2 t^2}{2K^2} \bigg) \\
&\leq C_s\bigg(\frac{K\sqrt{L+t}}{t\sigma}\bigg)^s\exp\bigg(-  \frac{cn\sigma^2 t^2}{K^2}\bigg) \\
&\leq C_s\sigma^{-s}K^s\bigg(\frac{\sqrt{1+t}}{t}\bigg)^s\exp\bigg(-  \frac{cn\sigma^2 t^2}{K^2}\bigg) \\
&\leq C_{s} \sigma^{-s}K^s (t^{-s} + t^{-s/2}) \exp\bigg(-  \frac{cn\sigma^2 t^2}{K^2}\bigg).\]
Combining this with (\ref{Concentration5}) gives the desired result.
\end{proof}

\begin{proof}[Proof of Theorem \ref{IntroductionMain3}] In what follows, let $C_s$ and $\tilde{C}_s$ be constants depending on $s$ 
whose value may change from line to line. We employ a slicing argument.
For the given value of $\delta$ and each integer $k\in\Z_{\geq 1}$, define
\[\Gamma_k = \Big\{ \phi \in \R^L : k\delta 
\leq \rho(\theta,\phi) < (k+1)\delta \Big\}.\]
Then for all $\phi\in \Gamma_k$, we have that
\[\infdiv{P_\theta}{P_\phi} \geq C_s \sigma^{-6}\rho(\theta,\phi)^2 
\geq C_s \sigma^{-6} k^2 \delta^2\]
by Proposition \ref{PreliminariesGlobal1}. Let $t_k = C_s \sigma^{-6}k^2 \delta^2$. By shrinking the constant $C_s$, we have that
\[\infdiv{P_\theta}{P_\phi} \geq 10 t_k.\]
Note that $\max\{2\sqrt{L}M , k\delta\} \leq 2\sqrt{L}M k$ 
since $\delta \leq 2\sqrt{L}M$. Invoking Lemma \ref{Concentration3} with $t = t_k$ and $K = \max\{ 2\sqrt{L} M, k\delta\}$ , we have that
\[&\ \ \ \ P\bigg(\sup_{\phi \in \mc{T} : \norm{\phi} \leq \max\{2\sqrt{L} M,k\delta\} } 
| R_n(\phi) - R(\phi)| > 4t_k \bigg) \\[2mm]
&\leq C_{s}(\sigma^{5s}k^{-s}\delta^{-2s} + 
\sigma^{2s} \delta^{-s}) \exp\bigg(-  \frac{\tilde{C}_s n k^2 \delta^4}{\sigma^{10}}\bigg) 
+ 
C_{s} \exp\bigg(- \tilde{C}_s n \min \bigg\{\frac{k^2\delta^2}{\sigma^6} , \frac{k^4\delta^4}{\sigma^{12}}\bigg\}\bigg) \\[2mm]
&\leq C_{s}\sigma^{5s}\delta^{-2s} \exp\bigg(-  \frac{\tilde{C}_s n k^2 \delta^4}{\sigma^{10}}\bigg) + 
C_{s} \exp\bigg(-\frac{ \tilde{C}_s  nk^2\delta^4}{\sigma^{12}}\bigg).
\]
Note that $\infdiv{P_\theta}{P_\phi} = R(\phi) - R(\theta) \geq 10t_k$. Thus in the event in which we have both 
$|R_n(\theta) - R(\theta)| \leq 4t_k$ and 
$|R_n(\phi) - R(\phi)| \leq 4t_k$, since $R_n(\phi) - R_n(\theta) \geq 2t_k \geq 0$, 
we see that $\phi$ is not the restricted MLE (recall that the restricted MLE $\tilde{\theta}_n$ 
minimises $R_n(\tilde{\theta}_n)$ on $\mc{T}$). Thus
\[
P(\tilde{\theta}_n \in \Gamma_k ) &\leq C_{s}\sigma^{5s}\delta^{-2s} \exp\bigg(-  \frac{\tilde{C}_s n k^2 \delta^4}{\sigma^{10}}\bigg) + 
C_{s} \exp\bigg(-\frac{ \tilde{C}_s  nk^2\delta^4}{\sigma^{12}}\bigg) \\[1mm]
&\leq C_{s}\sigma^{5s}\delta^{-2s} \exp\bigg(-  \frac{\tilde{C}_s n k^2 \delta^4}{\sigma^{12}}\bigg) + 
C_{s} \exp\bigg(-\frac{ \tilde{C}_s  nk^2\delta^4}{\sigma^{12}}\bigg)   \\[1mm]
&\leq C_s\sigma^{5s}\delta^{-2s} \exp\bigg(-  \frac{\tilde{C}_s n k^2 \delta^4}{\sigma^{12}}\bigg)\]
where the last inequality uses the fact that $\sigma^{5s} \delta^{-2s}$ is bounded below by a constant. 
Summing over $k\in \Z_{\geq 1}$ gives
\[P(\rho(\tilde{\theta}_n,\theta) \geq \delta) &\leq 
\sum_{k=1}^\infty 
P(\tilde{\theta}_n \in \Gamma_k) \\ 
&\leq C_s\sigma^{5s}\delta^{-2s} \sum_{k=1}^\infty \exp\bigg(-  \frac{\tilde{C}_s n k^2 \delta^4}{\sigma^{12}}\bigg).\]Now 
for $n \geq \widetilde{C}_s^{-1} \delta^{-4} \sigma^{12},$ the infinite summation is bounded by the geometric series 
\[\exp\bigg( - \frac{\tilde{C}_s n \delta^4}{\sigma^{12}}\bigg)\sum_{k=0}^\infty \exp(- k),\]
which in turn is dominated by the first term. Thus
\[P(\rho(\tilde{\theta}_n,\theta) \geq \delta) \leq C_s \sigma^{5s}\delta^{-2s} \exp\bigg(-\frac{\tilde{C}_s n\delta^4}{\sigma^{12}}\bigg)\]
as desired.
\end{proof}

\section{Acknowledgements}
SG was supported in part by the MOE grants  R-146-000-250-133, R-146-000-312-114 and MOE-T2EP20121-0013. 

SSM was partially supported by an INSPIRE research grant (DST/INSPIRE/04/2018/002193) from the Department of Science and Technology, Government of India and a Start-Up Grant from Indian Statistical Institute, Kolkata.
\pagebreak

\addcontentsline{toc}{section}{References}

\bibliographystyle{plain}
\bibliography{Bibliography}

\pagebreak

\appendix

\addcontentsline{toc}{section}{Appendices}
\renewcommand{\thesubsection}{\Alph{subsection}}

\subsection{Appendix A}\label{AppendixA}

\begin{lemma}\label{AppendixA1} 
Let $\theta,\phi\in \R^L$ be two vectors and 
suppose that $\rho(\theta ,\phi) \geq 3\norm{\theta}$. Then 
there exists a constant $C_L$, depending on $L$, 
such that $\infdiv{P_\theta}{P_\phi} \geq C_L \sigma^{-4} \rho(\theta,\phi)^2.$
\end{lemma}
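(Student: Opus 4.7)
The natural strategy is the Donsker--Varadhan variational formula with a carefully chosen test function. The hypothesis $\rho(\theta,\phi) \geq 3\norm{\theta}$, combined with the reverse triangle inequality $\rho(\theta,\phi) \leq \norm{\theta} + \norm{\phi}$, forces $\norm{\phi} \geq 2\norm{\theta}$ and $\norm{\phi} \geq \tfrac{2}{3}\rho(\theta,\phi)$; these two inequalities together give the quadratic-in-$\rho$ norm gap
\[\norm{\phi}^2 - \norm{\theta}^2 \;\geq\; \tfrac{1}{3}\,\rho(\theta,\phi)^2.\]
This motivates the $\mc{R}$-invariant quadratic test function $f(X) = \lambda \norm{X}^2$, whose behaviour under both $P_\theta$ and $P_\phi$ depends on the signal only through its norm and thus isolates precisely the above gap.

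The concrete steps are as follows. First, by rotational invariance of the Gaussian noise $\xi$, one has $\norm{X}^2 \stackrel{d}{=} \norm{\theta+\sigma\xi}^2$ under $P_\theta$, so $\mathbb{E}_{P_\theta}[\norm{X}^2] = \norm{\theta}^2 + L\sigma^2$. Second, a Gaussian integral (completing the square in the exponent) yields the closed form
\[\log \mathbb{E}_{P_\phi}[e^{\lambda\norm{X}^2}] \;=\; \frac{\lambda \norm{\phi}^2}{1 - 2\sigma^2\lambda} \;-\; \frac{L}{2}\log(1-2\sigma^2\lambda)\]
for any $\lambda < 1/(2\sigma^2)$. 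Substituting these into the Donsker--Varadhan inequality $\infdiv{P_\theta}{P_\phi} \geq \lambda\mathbb{E}_{P_\theta}[\norm{X}^2] - \log\mathbb{E}_{P_\phi}[e^{\lambda\norm{X}^2}]$ and reparametrising via $\nu = -2\sigma^2\lambda$, the right-hand side becomes a smooth concave function of $\nu > 0$ whose optimizer satisfies $\nu^\ast \sim (\norm{\phi}^2 - \norm{\theta}^2)/(L\sigma^2)$; a short calculus computation then gives
\[\infdiv{P_\theta}{P_\phi} \;\geq\; \frac{(\norm{\phi}^2 - \norm{\theta}^2)^2}{8\sigma^2\bigl(\norm{\phi}^2 + L\sigma^2/2\bigr)}.\]
For $\sigma$ sufficiently large that $L\sigma^2$ dominates $\norm{\phi}^2$ (in the paper's setting $\norm{\phi}$ is uniformly bounded), the denominator is of order $L\sigma^4$, and combining with the above norm-gap bound gives $\infdiv{P_\theta}{P_\phi} \geq \rho(\theta,\phi)^4/(36\,L\,\sigma^4)$.

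Finally, to pass from this $\rho^4$ rate to the claimed $\rho^2$ rate one exploits the context in which Lemma \ref{AppendixA1} is invoked---namely Case~2 of the proof of Proposition \ref{PreliminariesGlobal1}---where the hypothesis $\theta \in \mc{T}$ is in force. This implies $\norm{\theta} \geq \sqrt{s}\,m$, and hence $\rho(\theta,\phi) \geq 3\sqrt{s}\,m$ is bounded away from $0$. Absorbing this lower bound into the constant gives $\rho(\theta,\phi)^4 \geq C_L\,\rho(\theta,\phi)^2$ and yields $\infdiv{P_\theta}{P_\phi} \geq C_L\,\sigma^{-4}\rho(\theta,\phi)^2$ as stated. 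The main obstacle is the optimization over $\lambda$: both the naive choice $\lambda > 0$ and any $\lambda < 0$ that is too small in absolute value yield a vacuous (or negative) bound, and one must tune $\lambda$ precisely to the order $(\norm{\phi}^2 - \norm{\theta}^2)/(L\sigma^4)$ so that the second-order curvature of the log-MGF delivers the quadratic-in-$(\norm{\phi}^2 - \norm{\theta}^2)$ improvement on which the whole argument rests.
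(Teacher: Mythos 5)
Your proof takes the same core route as the paper's: the Donsker--Varadhan formula with the rotation-invariant quadratic test function $f(X) = \lambda\norm{X}^2$, $\lambda<0$, and the noncentral chi-square moment generating function to evaluate the log-term under $P_\phi$. The paper first rescales so that $\norm{\theta}=1$ and $\sigma\geq 1$ (thereby absorbing the $\norm{\theta}^2$-dependence of the constant into the usage context, where $\theta\in\mc{T}$ forces $\norm{\theta}$ to be bounded below) and then simply fixes $\lambda = -1/(4\sigma^4 L)$, which delivers the $\rho(\theta,\phi)^2/\sigma^4$ rate in one step. You instead keep $\lambda$ free, optimize to obtain a lower bound quadratic in the norm gap $\norm{\phi}^2-\norm{\theta}^2$, arrive at a $\rho^4/\sigma^4$ rate, and then convert $\rho^4 \geq C\rho^2$ using the same lower bound on $\norm{\theta}$. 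These are the same ingredients applied in a slightly different order, and the final constants agree up to numerical factors. One small imprecision is worth flagging: you justify ``$L\sigma^2$ dominates $\norm{\phi}^2$'' by asserting that $\norm{\phi}$ is uniformly bounded, but in Proposition \ref{PreliminariesGlobal1} (the place where this lemma is invoked) $\phi$ is required only to be collision-free, not to lie in $\mc{T}$, so $\norm{\phi}$ is not a priori bounded. Fortunately your intermediate bound $\bigl(\norm{\phi}^2-\norm{\theta}^2\bigr)^2 / \bigl(8\sigma^2\bigl(\norm{\phi}^2 + L\sigma^2/2\bigr)\bigr)$ is in fact even stronger in the regime $\norm{\phi}^2 \gtrsim L\sigma^2$ (there it scales like $\norm{\phi}^2/\sigma^2 \gtrsim \rho^2/\sigma^2$), so this was an unnecessary hypothesis rather than a genuine gap; a cleaner write-up would split into the two cases rather than assuming $\norm{\phi}$ small.
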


\begin{proof} By replacing $\theta, \phi$ and $\sigma$ with 
$\theta/\norm{\theta}, \phi/\norm{\theta}$ and $\sigma/\norm{\theta}$ 
respectively, we may assume that $\norm{\theta} = 1$ and $\sigma \geq 1$. 
We further assume without loss of generality that $\rho(\theta,\phi) = \norm{\theta - \phi}$. 
Now observe that 
\[ \big|\norm{\theta - \phi} - \norm{\phi}\big| \leq \norm{\theta} \leq \frac{1}{3} \norm{\theta - \phi},\]
where the left-hand side follows from two different applications of the triangle inequality. 
Thus $\norm{\phi} \approx \norm{\theta  - \phi}$ so it suffices to show that 
there exists a positive constant $C_L$ such that $\infdiv{P_\theta}{P_\phi} \geq C_L\sigma^{-4}\norm{\phi}^2$ whenever 
$\norm{\phi} \geq 2$. We will directly use the Donsker-Varadhan formula
\[\infdiv{P_\theta}{P_\phi} = \sup_f \bigg\{\underset{X\sim P_\theta}{\mathbb{E}}\big[f(X)\big] - 
\log\Big(\underset{X\sim P_\phi}{\mathbb{E}}
\big[e^{f(X)}\big]\Big)\bigg\}\]
with $f(x) := -\lambda\norm{x}^2$, where $\lambda \geq 0$ is a constant to be specified. For the first term, we have 
\[\underset{X\sim P_\theta}{\mathbb{E}}\Big[-\lambda\norm{X}^2\Big] &= 
\underset{X\sim P_\theta}{\mathbb{E}}\Big[-\lambda\norm{G\theta + \sigma\xi}^2\Big]  \\[1mm]
&= - \lambda \underset{X\sim P_\theta}{\mathbb{E}}\Big[ \norm{G\theta}^2 + 2\sigma(G\theta)^T\xi +  \sigma^2\norm{\xi}^2\Big] \\[1mm]
&= -\lambda(1+\sigma^2L). \]
For the second term, note that for $X = G\phi + \sigma\xi$, conditioned on $G$, 
the random variable $\norm{\frac{1}{\sigma}X}^2$ has a non-central $\chi^2$-distribution 
with $L$ degrees of freedom. We obtain
\[-\log \underset{X\sim P_\phi}{\mathbb{E}}\Big[ e^{-\lambda\sigma^2\norm{\frac{1}{\sigma}X}^2}\Big] 
&= -\log \mathbb{E}_G \bigg[\exp\Big(\frac{-\lambda\norm{G\phi}^2}{1+2\lambda\sigma^2}\Big) 
(1 + 2\lambda \sigma^2)^{-L/2}\bigg] \\[1mm]
&= \frac{\lambda}{1+2\lambda\sigma^2}\norm{\phi}^2
+ \frac{L}{2} \log(1 + 2\lambda \sigma^2). 
\]
We get
\[\infdiv{P_\theta}{P_\phi} \geq \sup_{\lambda\in \R_{\geq 0}}\Bigg\{
 \frac{\lambda}{1+2\lambda\sigma^2}\norm{\phi}^2
+ \frac{L}{2} \log(1 + 2\lambda \sigma^2) - \lambda(1 + \sigma^2L)\Bigg\}.\]
Choose $\lambda = \dfrac{1}{4\sigma^4L}.$ Then $\displaystyle 2\lambda\sigma^2 \leq 1$ so the 
quantity on the right-hand side is at least
\[\frac{\lambda}{2}\norm{\phi}^2 
+ \frac{L}{2}(2\lambda\sigma^2 - 4\lambda^2\sigma^4) - \lambda(1+\sigma^2L)\]
where we have used the fact that $\log(1+x) \geq x - x^2$ for $x \in \R_{\geq 0}$. Since $\norm{\phi} \geq 2$, we have that
\[\infdiv{P_\theta}{P_\phi} &\geq \Big(\frac{\lambda}{4}\norm{\phi}^2 + \lambda\Big) +
(L\lambda \sigma^2
 - 2L\lambda^2\sigma^4) - (\lambda + L\lambda \sigma^2)  \\[1mm]
&= \frac{1}{16 \sigma^4 L}\norm{\phi}^2 - \frac{1}{8\sigma^4L} \geq \frac{1}{32\sigma^4 L}\norm{\phi}^2\]
as desired.
\end{proof}

\subsection{Appendix B}\label{AppendixB}

\begin{proposition}\label{AppendixB1}
Let $\theta,\phi\in \R^L$ be two collision free signals such that $\Delta_2(\theta,\phi) = 0$. 
If $s \geq 7$, then $\phi$ either lies in the orbit of $\theta$ or lies in the orbit of $-\theta$.
\end{proposition}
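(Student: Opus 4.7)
The plan is to exploit the explicit description of the second moment tensor on collision-free signals, together with the Bekir-Golomb resolution of Piccard's conjecture, in order to reduce the problem to an elementary system of algebraic relations between the nonzero entries of $\theta$ and $\phi$.

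First I would rewrite $\E[(G\theta)^{\otimes 2}]_{i,j} = \frac{1}{L}\sum_g \theta(i+g)\theta(j+g)$ and observe, using collision-freeness, that for $i \neq j$ this sum vanishes unless $i - j \in \mc{D}(\theta)$, in which case it has exactly one nonzero term $\frac{1}{L}\theta(i')\theta(j')$ where $(i', j') \in \supp(\theta)^2$ is the unique ordered pair with $i' - j' = i - j$. The diagonal entries equal the constant $\frac{1}{L}\norm{\theta}^2$. Analogous statements apply to $\phi$. The hypothesis $\Delta_2(\theta, \phi) = 0$ then yields two consequences: $\norm{\theta} = \norm{\phi}$ from the diagonal, and $\mc{D}(\theta) = \mc{D}(\phi)$ from the pattern of nonzero off-diagonal entries.

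Next, since $s \geq 7$, the Bekir-Golomb theorem forces $\supp(\phi)$ to be a cyclic translate of $\supp(\theta)$. Both the hypothesis and the desired conclusion are invariant under replacing $\phi$ by an element of its orbit, so I may henceforth assume $\supp(\theta) = \supp(\phi) =: S$. With equal supports, for any distinct $i, j \in S$ the collision-free property identifies the unique preimage in $S^2$ of the difference $i - j$ as $(i, j)$ itself, so the relevant off-diagonal entries simplify to $\frac{1}{L}\theta(i)\theta(j)$ and $\frac{1}{L}\phi(i)\phi(j)$; equating them gives $\theta(i)\theta(j) = \phi(i)\phi(j)$.

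Setting $r_i := \phi(i)/\theta(i)$ (well-defined since $\theta$ does not vanish on $S$), the last relation becomes $r_i r_j = 1$ for all distinct $i, j \in S$. Applying this to any three distinct indices in $S$ (available since $s \geq 7 \geq 3$) forces all $r_i$ to share a common value $r$ with $r^2 = 1$, whence $\phi = \pm\theta$, completing the argument. The only technically delicate step is the invocation of the Bekir-Golomb theorem to align the supports by a shift; once this is achieved, the ratio argument is purely algebraic and uses only $s \geq 3$.
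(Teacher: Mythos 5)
Your proof is correct and follows essentially the same route as the paper: read off the support structure from the off-diagonal entries of the second moment tensor via collision-freeness, invoke Bekir--Golomb to align supports up to a shift, and then reduce to the algebraic condition $r_i r_j = 1$ on the entry ratios. Your final step (using three distinct indices to force all $r_i$ equal with $r_i^2 = 1$) is a slightly more direct version of the paper's argument, which instead orders the ratios by absolute value and derives a contradiction from strict inequality, but this is a cosmetic difference rather than a genuinely distinct approach.
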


\begin{proof} We have that for any $1\leq i , j \leq L$ with $i\neq j$,
\[\mathbb{E}[(G\theta)^{\otimes 2}]_{i,j} &= \frac{1}{L}\sum_{g\in \Z_L} \theta(i + g) \theta(j + g)
\\ &= \begin{cases}  0  & \ \text{ if } i-j \not\in \supp(\theta) \\
\dfrac{1}{L}\theta(i')\theta(j') & \ \text{ otherwise, where } i',j'\in \text{supp}(\theta) \text{ with } i - j = i' - j'.
\end{cases}\]
Thus the support of $\theta$ is completely determined by the second moment tensor due to the resolution of the beltway problem. 
As $\Delta_2(\theta,\phi) = 0$, we must have $\supp(\theta) = \supp(R_\ell\phi)$ 
for some $1 \leq \ell \leq L$. Without loss of generality assume $\ell = L$. In other words, we have 
$\text{supp}(\theta) = 
\text{supp}(\phi)$. 
Let $\{k_1,k_2,\cdots,k_s\}$ denote their (common) support. Then the condition 
\[\theta(k_i) \theta(k_{i'}) = \phi(k_i)\phi(k_{i'})  \ \ \ \ \ \ \text{ for all } 1 \leq i,i' \leq s\]
is equivalent to condition that the pairwise product of the ratios
\[\frac{\theta(k_1)}{\phi(k_1)},\ \frac{\theta(k_2)}{\phi(k_2)} , \cdots ,\ \frac{\theta(k_s)}{\phi(k_s)} \]
must all evaluate to $1$. This implies that these ratios must either all be positive or all be negative.  
After reordering if necessary, 
we assume without loss of generality that
\[\bigg|\frac{\theta(k_1)}{\phi(k_1)}\bigg| \geq  \bigg|\frac{\theta(k_2)}{\phi(k_2)}\bigg| 
\geq \cdots \geq \bigg|\frac{\theta(k_s)}{\phi(k_s)}\bigg|. \]
If the ratios are all positive, then since 
\[\frac{\theta(k_1)}{\phi(k_1)}\cdot \frac{\theta(k_2)}{\phi(k_2)} = 1,\] 
we must have $\theta(k_2)/\phi(k_2)\leq 1$. If the above inequality is strict, then 
\[\frac{\theta(k_2)}{\phi(k_2)} \cdot \frac{\theta(k_3)}{\phi(k_3)} < 1,\] a contradiction. Hence 
\[\frac{\theta(k_2)}{\phi(k_2)}\cdot \frac{\theta(k_i)}{\phi(k_i)} = 1 \text{ for all $i \neq 2$}
&\implies 
\frac{\theta(k_i)}{\phi(k_i)} = 1 \ \text{ for all $1\leq i \leq L$} \\[1mm] 
&\implies \theta = \phi.\] 

In the latter case, a similar argument yields that $\theta(k_i)/\phi(k_i) = -1$ for all $1 \leq i \leq L$ and so $\theta = 
-\phi$.
\end{proof}

\subsection{Appendix C}\label{AppendixC}

We give a full proof of the modifield Theorem \ref{PreliminariesKullback2}. 
We first prove a modified version of lemma B.12. Fix a positive number $K_0 \geq 1.$

\begin{lemma}\label{AppendixC1} For any $m\in \Z_{\geq 1}$ 
and $\theta,\phi\in \R^L$ satisfying $\norm{\theta} = 1$ 
and $\rho(\theta,\phi) \leq K_0$, we have 
\[\norm{\Delta_m(\theta,\phi)}^2  
\leq 12 \cdot 18^m K_0^{2m}\rho(\theta,\phi)^2. \]
\end{lemma}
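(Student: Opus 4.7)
The plan is to give a direct proof via a telescoping identity for tensor powers, combined with the orbit-invariance of the moment tensor and the isometric nature of the cyclic shift action on $\R^L$.

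First I would reduce to the case $\rho(\theta,\phi)=\norm{\theta-\phi}$. Since the uniform measure on $\mc{R}$ is left-invariant, $\mathbb{E}_G[(GG_0\phi)^{\otimes m}]=\mathbb{E}_G[(G\phi)^{\otimes m}]$ for any fixed $G_0\in\mc{R}$, so $\Delta_m(\theta,\phi)$ depends on $\phi$ only through its $\mc{R}$-orbit. Choosing $G_0$ to realize the minimum in the definition of $\rho$, I may assume $\rho(\theta,\phi)=\norm{\theta-\phi}$. The triangle inequality then yields $\norm{\phi}\leq\norm{\theta}+\rho(\theta,\phi)\leq 1+K_0\leq 2K_0$, using the standing hypothesis $K_0\geq 1$.

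The main computation uses the telescoping identity
\[
a^{\otimes m}-b^{\otimes m}=\sum_{k=0}^{m-1}a^{\otimes k}\otimes(a-b)\otimes b^{\otimes(m-1-k)},
\]
applied with $a=G\theta$ and $b=G\phi$. Taking expectation over $G$ and applying Jensen's inequality to the convex Frobenius norm gives
\[
\norm{\Delta_m(\theta,\phi)}\leq\sum_{k=0}^{m-1}\mathbb{E}\,\norm{(G\theta)^{\otimes k}\otimes G(\theta-\phi)\otimes(G\phi)^{\otimes(m-1-k)}}.
\]
The Frobenius norm is multiplicative on elementary tensors, and every $G\in\mc{R}$ is a cyclic shift, hence an orthogonal operator, so $\norm{Gv}=\norm{v}$ for all $v\in\R^L$. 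This yields
\[
\norm{\Delta_m(\theta,\phi)}\leq\rho(\theta,\phi)\sum_{k=0}^{m-1}\norm{\theta}^k\norm{\phi}^{m-1-k}\leq\rho(\theta,\phi)\sum_{k=0}^{m-1}(2K_0)^{m-1-k}\leq (2K_0)^m\rho(\theta,\phi),
\]
where the last step uses $\sum_{j=0}^{m-1}r^j\leq r^m$ for $r=2K_0\geq 2$. Squaring gives $\norm{\Delta_m(\theta,\phi)}^2\leq 4^m K_0^{2m}\rho(\theta,\phi)^2$, which is strictly tighter than the claimed $12\cdot 18^m K_0^{2m}\rho(\theta,\phi)^2$.

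The hard part is essentially nonexistent: the lemma is a soft quantitative bound and the telescoping produces constants with plenty of slack. The two ingredients worth highlighting are (i) the orbit-invariance of $\Delta_m$ that permits the reduction to $\rho(\theta,\phi)=\norm{\theta-\phi}$, and (ii) the multiplicativity of the Frobenius norm on simple tensors paired with the isometric action of $\mc{R}$ on $\R^L$. The inflated constants $12$ and $18^m$ in the statement reflect a less sharp route taken in the original \cite{BRW} and are absorbed automatically in this approach.
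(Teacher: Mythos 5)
Your proof is correct, and it takes a genuinely different route from the paper. Both arguments begin the same way: reduce to $\rho(\theta,\phi)=\norm{\theta-\phi}$ via orbit-invariance of $\Delta_m$, and observe that cyclic shifts are orthogonal. The key step then diverges. The paper applies Jensen in the form $\norm{\E[X]}^2\leq\E[\norm{X}^2]$, uses orthogonality of $G$ to collapse the expectation to $\normnorm{\theta^{\otimes m}-\phi^{\otimes m}}^2$, expands this as $1-2\langle\theta,\phi\rangle^m+\norm{\phi}^{2m}=1-2(1+\gamma)^m+(1+2\gamma+\epsilon^2)^m$ with $\gamma=\langle\theta,\phi-\theta\rangle$, and then controls the result via a binomial estimate $(1+x)^m=1+mx+r_m$ with $|r_m|\leq 18^mK_0^{2m}x^2$; the leading-order terms involving $\gamma$ cancel, leaving the $O(\epsilon^2)$ remainder. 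You instead telescope $a^{\otimes m}-b^{\otimes m}=\sum_{k=0}^{m-1}a^{\otimes k}\otimes(a-b)\otimes b^{\otimes(m-1-k)}$ inside the expectation, apply the triangle inequality, and use that the Frobenius norm is multiplicative on simple tensors together with $\norm{Gv}=\norm{v}$. The expectation then drops trivially, and the sum $\sum_{j=0}^{m-1}\norm{\phi}^j$ is dominated by $(2K_0)^m$ since $2K_0\geq 2$. Your route is more elementary — it sidesteps the inner-product expansion and the careful remainder bookkeeping entirely — and it yields the sharper constant $4^mK_0^{2m}$ in place of $12\cdot 18^mK_0^{2m}$, which of course implies the stated bound. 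One small point worth making explicit: the hypothesis $K_0\geq 1$, which you invoke for $1+K_0\leq 2K_0$ and for $\sum_{j=0}^{m-1}(2K_0)^j\leq(2K_0)^m$, is indeed a standing assumption declared at the start of Appendix C, so the appeal is legitimate.
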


\begin{proof} Assume without loss of generality that $\rho(\theta,\phi) = \norm{\theta - \phi} =: \epsilon.$ 
By Jensen's inequality, 
\[\norm{\mathbb{E}[(G\theta)^{\otimes m} - (G\phi)^{\otimes m}]}^2 
&\leq \mathbb{E}\big[\normnorm{(G\theta)^{\otimes m} - (G\phi)^{\otimes m}}^2\big] 
= \normnorm{\theta^{\otimes m} - \phi^{\otimes m}}^2.\]
Expanding the norm yields
\[\normnorm{\theta^{\otimes m} - \phi^{\otimes m}}^2 &= \norm{\theta}^{2m} 
- 2\langle \theta,\phi \rangle^m + \norm{\phi}^{2m} \\
&= 1 - 2(1+\gamma)^m + (1+2\gamma + \epsilon^2)^m\]
where $\gamma := \langle \theta, \phi - \theta \rangle$ is such that $|\gamma| \leq \epsilon$ 
by Cauchy Schwarz. 

By the binomial theorem, for all $x$ such that $|x| \leq 3K_0^2$, there exists an $r_m$ such that
\[(1+x)^m = \sum_{k=0}^m \begin{pmatrix} m \\ k \end{pmatrix} x^k = 1 + mx + r_m\]
with $|r_m| \leq 18^m K_0^{2m} x^2$. By assumption, we have $|2 \gamma + \epsilon^2| \leq 3K_0^2$ and 
so
\[\normnorm{\theta^{\otimes m} - \phi^{\otimes m}}^2 &\leq 1 - 2 - 2m\gamma + 2\cdot 18^mK_0^{2m}\epsilon^2
+ 1 + 2m\gamma + m\epsilon^2+ 18^m K_0^{2m} \cdot 9\epsilon^2  \\
&= 2\cdot 18^m K_0^{2m} \epsilon^2 + m\epsilon^2 + 18^m K_0^{2m} \cdot 9\epsilon^2 \\
&\leq 12 \cdot 18^m K_0^{2m} \epsilon^2. \]
\end{proof}

We now proof the upper bound of the modified Theorem \ref{PreliminariesKullback2}.

\begin{proof}  If $\theta = 0,$ then since 
$\rho(\theta,\phi) \leq K_0\norm{\theta}$, we must have $\phi = 0$ as well so the statement trivially holds. Otherwise, 
first note that each term in (\ref{PreliminariesKullback2}) remains unchanged if the quantities $\theta$, $\phi$ 
and $\sigma$ are replaced by $\theta/\norm{\theta},\ \phi/\norm{\theta}$ and $\sigma/\norm{\theta}$ 
respectively. The same is true when $\phi$ is replaced by another vector $G_0\phi$ in the same $\mathcal{R}$-orbit. 
As such, we  henceforth assume $\norm{\theta} = 1$, $\sigma \geq 1$ and $\rho(\theta,\phi) = \norm{\theta - \phi}.$

Instead of establishing an upper bound on the KL divergence $\infdiv{P_\theta}{P_\phi}$ directly, we instead work with the $\chi^2$-divergence
\[\chi^2(P_\theta,P_\phi) := 
\int_{\mathbb{R}^d}\frac{(f_\theta(x) - f_\phi(x))^2}{f_\phi(x)}\ dx.\]
and then pass to the KL divergence via the upper bound $\infdiv{P_\theta}{P_\phi} \leq \chi^2(P_\theta,P_\phi)$. Since $\mathbb{E}[G\phi] = 0,$ Jensen's inequality implies that
\begin{align*}f_\phi(x) &\geq
\frac{1}{\sigma^d (2\pi)^{d/2}}e^{-\frac{\norm{x}^2 + \norm{\phi}^2}{2\sigma^2}}e^{\frac{1}{\sigma^2}\mathbb{E}[x^TG\phi]} = 
\frac{1}{\sigma^d (2\pi)^{d/2}}e^{-\frac{\norm{x}^2 + \norm{\phi}^2}{2\sigma^2}}.
\end{align*}
Hence
\[\frac{(f_\theta(x) - f_\phi(x))^2}{f_\phi(x)} \leq 
\frac{1}{\sigma^d(2\pi)^{d/2}}e^{\frac{-\norm{x}^2 + \norm{\phi}^2}{2\sigma^2}}\Big(e^{-\frac{ \norm{\theta}^2}{2\sigma^2}}\mathbb{E}\big[e^{\frac{1}{\sigma^2}x^TG\theta}\big] - e^{-\frac{\norm{\phi}^2}{2\sigma^2}}\mathbb{E}\big[e^{\frac{1}{\sigma^2}x^TG\phi}\big]\Big)^2.\]
To obtain our desired bound on the $\chi^2$-divergence, we will integrate both sides with respect to $x$. Expanding out the square on the right-hand side yield three terms, which we will evaluate separately. The first term is
\begin{align*}
&\ \ \ \ e^{\frac{\norm{\phi}^2 - 2\norm{\theta}^2}{2\sigma^2} }  \int_{\mathbb{R}^d}\frac{1}{\sigma^d (2\pi)^{d/2}}
e^{-\frac{ \norm{x}^2}{2\sigma^2}} 
\mathbb{E}\big[e^{\frac{1}{\sigma^2} x^T(G + G') \theta}\big] \ dx
\end{align*}
where $G'$ denotes an independent and identically distributed copy of $G$. To simplify the expression, we seek to rewrite it as the integral of the density of a Gaussian by completing the square. We obtain
\begin{align*}
&\ \ \ \ e^{\frac{\norm{\phi}^2 - 2\norm{\theta}^2}{2\sigma^2} }  \int_{\mathbb{R}^d}\frac{1}{\sigma^d (2\pi)^{d/2}}
e^{-\frac{ \norm{x}^2}{2\sigma^2}} 
\mathbb{E}\big[e^{\frac{1}{\sigma^2} x^T(G + G') \theta}\big] \ dx \\
&=  e^{\frac{\norm{\phi}^2 - 2\norm{\theta}^2}{2\sigma^2}}\mathbb{E}\Bigg[\int_{\mathbb{R}^d}\frac{1}{\sigma^d (2\pi)^{d/2}}  
e^{-\frac{1}{2\sigma^2} (x-(G + G')\theta)^T (x- (G+G') \theta)}  dx  \cdot e^{\frac{1}{2\sigma^2}((G+ G')\theta)^T((G + G')\theta)} \Bigg] \\ 
&=e^{\frac{\norm{\phi}^2 - 2\norm{\theta}^2}{2\sigma^2}}
\mathbb{E}\Big[e^{\frac{1}{2\sigma^2}((G+ G')\theta)^T((G + G')\theta)} \Big] \\
&=e^{\frac{\norm{\phi}^2}{2\sigma^2}}
\mathbb{E}\big[ e^{\frac{\theta^T G \theta}{\sigma^2}} \big]
\end{align*}
Via similar computations, the second and third terms evaluate to
\[ - 2e^{\frac{\norm{\phi}^2}{2\sigma^2}}
\mathbb{E}\big[ e^{\frac{\theta^T G\phi}{\sigma^2}} \big] \ \ \ \ \ \text{ and } \ \ \ \ \
e^{\frac{\norm{\phi}^2}{2\sigma^2}}
\mathbb{E}\big[ e^{\frac{\phi^TG\phi}{\sigma^2}} \big]\]
respectively. As $\norm{\theta - \phi} \leq K_0$ and $\norm{\theta} = 1$, we have that $\norm{\phi}^2 \leq (K_0 + 1)^2 \leq 4K_0^2$ and so 
\[e^{\frac{\norm{\phi}^2}{2\sigma^2}} \leq  e^{\frac{2K_0^2}{\sigma^2}}.\] A power series expansion yields
\begin{align*}
\chi^2(P_\theta, P_\phi) &\leq e^{\frac{2K_0^2}{\sigma^2}}\mathbb{E}\Big[ e^{\frac{\theta^TG\theta}{\sigma^2}} - 
2e^{\frac{\theta^T G \phi}{\sigma^2}}  + e^{\frac{\phi^T G \phi}{\sigma^2}} \Big] \\
&= e^{\frac{2K_0^2}{\sigma^2}}\sum_{m=1}^\infty \frac{1}{\sigma^{2m}m!} \mathbb{E}\big[ (\theta^T G \theta)^m - 2(\theta^T G \phi)^m + (\phi^T G \phi)^m\big] 
\\
&= 
e^{\frac{2K_0^2}{\sigma^2}}\sum_{m=1}^\infty
\frac{1}{\sigma^{2m}m!}\norm{\Delta_m(\theta,\phi)}^2.
\end{align*}
With the lemma, and using the fact that $\sigma \geq 1$, we conclude the proof with
\[e^{\frac{2K_0^2}{\sigma^2}}\sum_{m=k}^\infty \frac{\norm{\Delta_m(\theta,\phi)}^2}{\sigma^{2m}m!} &\leq 
12e^{\frac{2K_0^2}{\sigma^2}}  \sum_{m=k}^\infty \frac{18^m K_0^{2m} \rho(\theta,\phi)^2}{\sigma^{2m}m!} \\[1mm] &\leq
12e^{\frac{2K_0^2}{\sigma^2}} \cdot \frac{\rho(\theta,\phi)^2}{\sigma^{2k}}\sum_{m=k}^\infty 
\frac{18^m K_0^{2m}}{m!} \\[1mm] &\leq 12e^{\frac{2K_0^2}{\sigma^2} + 18K_0^2} 
\cdot\frac{ \rho(\theta,\phi)^2}{\sigma^{2k}}.\]
\end{proof}

The remainder of the section is dedicated to proving the lower bound of the modified Theorem 
\ref{PreliminariesKullback2}. To that end, we first recall the key ingredients used in the original proof of Theorem 
\ref{PreliminariesKullback2}. At each step, the modifications that are needed to suit our current context are highlighted.

\begin{definition}\label{AppendixC2} The \itbf{(probabilist's) Hermite polynomials} are a family of polynomials 
$\{h_k\}_{k=0}^\infty$ defined by 
\[h_k(x) := (-1)^k e^{\frac{x^2}{2}} \dfrac{\partial^k}{\partial x^k} e^{-\frac{x^2}{2}}, \ \ \ \ \ \ k\in\mathbb{Z}_{\geq 0}.\]
\end{definition}
\begin{fact}\label{AppendixC3} The Hermite polynomials satisfy the following basic properties:
\begin{enumerate}[label=(\roman*)]
\item The polynomial $h_k(x)$ has degree $k$;
\item The family $\{h_k\}_{k=0}^\infty$ is an orthogonal basis for $L^2(\mathbb{R},\gamma)$, where $\gamma$ denotes the standard Gaussian measure on $\mathbb{R};$
\item We have $\norm{h_k}^2_{L^2(\mathbb{R},\gamma)} = k!$;
\item For any $\mu\in\R$, we have $\underset{Y\sim \mathcal{N}(\mu,1)}{\mathbb{E}}\big[h_k(Y)\big]= \mu^k.$
\end{enumerate}
\end{fact}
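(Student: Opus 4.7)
The plan is to derive all four properties from two cornerstone identities: the Rodrigues formula in Definition \ref{AppendixC2} and the exponential generating function
\[
\sum_{k=0}^{\infty} h_k(x)\,\frac{t^k}{k!} = e^{tx - t^2/2},
\]
which itself follows from a short computation. Setting $\phi(x) = e^{-x^2/2}$, one notes that $\partial_t\, e^{-(x-t)^2/2} = -\partial_x\, e^{-(x-t)^2/2}$, so iterating gives $\partial_t^k\, e^{-(x-t)^2/2}\big|_{t=0} = (-1)^k \phi^{(k)}(x) = h_k(x)\phi(x)$; multiplying by $e^{x^2/2}$ and rearranging yields the generating identity.

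For (i), I would first establish the three-term recursion $h_{k+1}(x) = x\,h_k(x) - h_k'(x)$. Differentiating the defining identity $\phi^{(k)}(x) = (-1)^k h_k(x)\phi(x)$ and using $\phi'(x) = -x\phi(x)$ makes this recursion drop out immediately. Since $h_0 = 1$ and $h_1 = x$, induction then shows that $h_k$ is monic of degree exactly $k$.

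For (iii), I would integrate by parts $k$ times against the Gaussian weight:
\[
\int_{\R} h_j(x)\,h_k(x)\,\phi(x)\,dx = (-1)^k\!\int_{\R} h_j(x)\,\phi^{(k)}(x)\,dx = \int_{\R} h_j^{(k)}(x)\,\phi(x)\,dx,
\]
with boundary terms vanishing thanks to the super-polynomial decay of $\phi$. For $j<k$ this integral is zero because $\deg h_j = j < k$, giving the orthogonality half of (ii); for $j=k$ the monicity from (i) forces $h_k^{(k)} \equiv k!$, producing $\norm{h_k}^2_{L^2(\R,\gamma)} = k!$ after normalising by $\sqrt{2\pi}$. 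The one step requiring genuine care is the completeness half of (ii). Since (i) implies that $\{h_k\}_{k\geq 0}$ linearly spans $\R[x]$, the task reduces to showing that polynomials are dense in $L^2(\R,\gamma)$. I would argue this through the standard moment route: if $f \in L^2(\R,\gamma)$ is orthogonal to every monomial, then $z \mapsto \int_{\R} f(x)\,e^{zx}\gamma(x)\,dx$ is an entire function of $z$ whose Taylor coefficients at $0$ all vanish, hence is identically zero; restricting to $z = i\xi$ recovers the Fourier transform of $f\gamma$, forcing $f \equiv 0$. This is the main conceptual obstacle; the rest of the argument is routine computation.

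Finally, (iv) follows painlessly from the generating function. For $Y \sim \mc{N}(\mu,1)$, the Gaussian moment generating function yields $\mathbb{E}[e^{tY - t^2/2}] = e^{\mu t}$; expanding both sides in powers of $t$, swapping expectation and summation (justified by absolute integrability), and equating coefficients of $t^k/k!$ delivers $\mathbb{E}[h_k(Y)] = \mu^k$.
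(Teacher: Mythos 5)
The paper states Fact~\ref{AppendixC3} without proof, treating it as textbook material about the probabilist's Hermite polynomials, so there is no in-paper argument to compare against; your blind proof is correct and supplies what the paper elides. The generating-function derivation, the recursion $h_{k+1}=xh_k-h_k'$ for monicity and degree, the $k$-fold integration by parts for orthogonality and the normalisation $\norm{h_k}^2_{L^2(\R,\gamma)}=k!$ (you correctly absorb the $1/\sqrt{2\pi}$ from $d\gamma$), and the MGF computation $\mathbb{E}[e^{tY-t^2/2}]=e^{\mu t}$ for property (iv) are all sound. You also correctly identify the one genuinely nontrivial step --- completeness of $\{h_k\}$ in $L^2(\R,\gamma)$ --- and the analyticity argument (orthogonality to all monomials forces $z\mapsto\int f\,e^{zx}\,d\gamma$ to vanish identically, hence the Fourier transform of $f\gamma$ vanishes) is the standard and valid route; it works precisely because the Gaussian weight decays fast enough to make that map entire, which is the determinacy condition that fails for heavier-tailed weights. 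Nothing to correct.
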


\begin{definition}\label{AppendixC4}
For each multi-index $\alpha = (\alpha_1,\cdots,\alpha_d)\in \mathbb{N}_0^d$, define the multivariate polynomial $h_\alpha$ by
\[h_\alpha(x_1,\cdots,x_d) := \prod_{i=1}^d h_{\alpha_i} (x_i).\]
The family $\big\{h_\alpha  :  \alpha \in \mathbb{N}_{0}^d \big\}$ is called the \itbf{multivariate Hermite polynomials}.
\end{definition}
The multivariate Hermite polynomials form an orthogonal basis for the product space $L^2(\mathbb{R}^d,\gamma^{\otimes d})$. By properties (ii), (iii) and (iv) of 
Fact \ref{AppendixC3}, the family of rescaled Hermite polynomials
$\{H_\alpha \ : \ \alpha \in \mathbb{N}^d_0\}$ defined by
\begin{equation}\label{AppendixC5}
H_\alpha(x_1,\cdots, x_d) := \sigma^{|\alpha|} h_\alpha(\sigma^{-1}x_1,\cdots,\sigma^{-1}x_d)
\end{equation}
satisfy the following identities
\begin{align}\displaystyle\underset{Z \sim \mathcal{N}(\bs \mu, \sigma^2 \bs{I}_d)}{\mathbb{E}}\big[H_\alpha ( Z)\big] &= \prod_{i=1}^d\mu_i^{\alpha_i} \label{AppendixC6} \\ 
\underset{Z \sim \mathcal{N}(\bs{0},\sigma^2 \bs{I}_d)} {\mathbb{E}}\big[H_\alpha( Z) 
H_\beta( Z) \big] &= \begin{cases} \sigma^{2|\alpha|} \alpha! & \ \text{ if } \alpha = \beta \\
0 & \ \text{ otherwise}.
\end{cases} \label{AppendixC7}
\end{align}
Next, for each positive integer $m$, we define the function $H_m : \R^d \to (\R^d)^{\otimes m}$ in the following way. Given $x\in \mathbb{R}^d$, set $H_m(x)$ to be the order-$m$ symmetric tensor whose $(i_1,\cdots, i_m)$th-entry is given by
$H_{\alpha^{(i_1\cdots i_m)}}(x)$, where $\alpha^{(i_1\cdots i_m)} \in \{0,\cdots,m\}^d$ is the multi-index associated to $(i_1,\cdots,i_m)$:
\[\alpha^{(i_1 \cdots i_m)}_\ell := \big|\big\{{j \in [m] \ : \ i_j = \ell}\big\}\big|, \ \ \ \ \ \ \ 1\leq \ell \leq d.\]
Note that  $|\alpha^{(i_1\cdots i_m)}| = m$ for each $m$-tuple $(i_1,\cdots,i_m)\in [d]^m$.

The motivation behind the above definition will gradually become apparent once we write the quantities  $\norm{\Delta_m(\theta,\phi)}^2$ in terms of the family $(H_m)_{m=1}^\infty$, where $\theta$ and $\phi$ are arbitrary vectors in $\R^d$. For a positive integer $k$, consider the degree $k$ polynomial
\[T_k(x) := \sum_{m=1}^k \frac{\langle \Delta_m(\theta,\phi) , H_m(x) \rangle}{(\sqrt{3} \sigma)^{2m} m!}.\]
If $X\sim P_\theta$, then (\ref{AppendixC5}) implies that
\[\mathbb{E}[T_k(X)] = 
\mathbb{E}_G\Bigg[\sum_{m=1}^k \frac{\big\langle \Delta_m(\theta,\phi) , \mathbb{E}_\xi[H_m(X)] \big\rangle}{(\sqrt{3} \sigma)^{2m} m!} \Bigg]
= \sum_{m=1}^k \frac{\big\langle 
\Delta_m(\theta,\phi), \mathbb{E}[(G\theta)^{\otimes m} ]\big\rangle}{(\sqrt{3}\sigma)^{2m} m!}.\]
Hence if $Y \sim P_\phi$, we get
\[\mathbb{E}[T_k(X)] - \mathbb{E}[T_k(Y)] 
= \sum_{m=1}^k \frac{\norm{\Delta_m(\theta,\phi)}^2}{(\sqrt{3}\sigma)^{2m} m!} =: \delta.\]
To proceed, we will use the following lemma to relate the KL divergence between $P_\theta$ and $P_\phi$ to the quantity $\delta$. 
(Remark: This lemma remains unchanged from lemma B.11 of the BRW paper).

\begin{lemma}\label{AppendixC8}
Let $P_1$ and $P_2$ be any two probability distributions on a measure space $(\Omega,\mathcal{F})$. Suppose that there exists a measurable function $F : \Omega \to \R$ such 
that $\big(\mathbb{E}_{P_1}[F(X)] - \mathbb{E}_{P_2}[F(X)]\big)^2 = \mu^2$ and 
$\max\big\{\text{var}_{P_1}(F(X)),\text{var}_{P_2}(F(X))\big\} \leq \sigma^2$. Then 
\begin{equation}\label{AppendixC9}\infdiv{P_1}{P_2} \geq \frac{\mu^2}{4\sigma^2 + \mu^2}.\end{equation}
\end{lemma}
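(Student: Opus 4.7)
The natural Donsker--Varadhan route $\infdiv{P_1}{P_2} \ge \sup_t\{t\,\mathbb{E}_{P_1}[F] - \log\mathbb{E}_{P_2}[e^{tF}]\}$ is unavailable here, since the hypothesis only controls the second moment of $F$ under $P_2$ and the moment generating function inside the logarithm may well be $+\infty$ for every $t \neq 0$. The plan is to sidestep this and route the bound through the squared Hellinger distance $H^2(P_1,P_2) = \tfrac{1}{2}\int(\sqrt{p_1}-\sqrt{p_2})^2\,d\lambda$ (for any common dominating measure $\lambda$), using the standard sandwich $\infdiv{P_1}{P_2} \ge -2\log(1-H^2(P_1,P_2)) \ge 2H^2(P_1,P_2)$, which follows from applying Jensen's inequality to $-\log$ and then invoking the elementary bound $-\log(1-x)\ge x$ for $x \in [0,1)$.

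The first step is to exploit the shift-invariance of the hypothesis: replacing $F$ by $F-c$ for any $c\in\R$ changes neither $(\mathbb{E}_{P_1}[F]-\mathbb{E}_{P_2}[F])^2$ nor the two variances. Choosing $c=(\mathbb{E}_{P_1}[F]+\mathbb{E}_{P_2}[F])/2$, I may assume without loss of generality that $\mathbb{E}_{P_1}[F]=\mu/2$ and $\mathbb{E}_{P_2}[F]=-\mu/2$. This symmetric centering is precisely what will produce the $\mu^2$ summand in the denominator of the target bound, because it gives $\mathbb{E}_{P_i}[F^2] = \text{var}_{P_i}(F)+\mu^2/4 \le \sigma^2+\mu^2/4$ for both $i=1,2$ simultaneously, a bound that will be applied under both measures at once.

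The core of the argument is then a single Cauchy--Schwarz decomposition of the mean gap along the Hellinger factorisation: writing $\mu = \int F(\sqrt{p_1}+\sqrt{p_2})\cdot(\sqrt{p_1}-\sqrt{p_2})\,d\lambda$ and applying Cauchy--Schwarz to split the two bracketed factors, one obtains $\mu^2 \le \big(\int F^2(\sqrt{p_1}+\sqrt{p_2})^2\,d\lambda\big)\cdot 2H^2(P_1,P_2)$. I then bound the first factor crudely using $(\sqrt{a}+\sqrt{b})^2\le 2(a+b)$ together with the centred second-moment bound from the previous step, yielding $\int F^2(\sqrt{p_1}+\sqrt{p_2})^2\,d\lambda \le 2(\mathbb{E}_{P_1}[F^2]+\mathbb{E}_{P_2}[F^2]) \le 4\sigma^2+\mu^2$. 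Rearranging delivers $H^2(P_1,P_2) \ge \mu^2/(2(4\sigma^2+\mu^2))$, after which the Hellinger--KL sandwich from the opening paragraph closes out the argument. There is no substantive obstacle here; the proof works precisely because symmetric centering forces the Cauchy--Schwarz ``denominator'' factor to be $4\sigma^2+\mu^2$ rather than a purely $\sigma$-only quantity, matching the bound we are aiming for.
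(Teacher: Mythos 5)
Your proof is correct, and while it shares the same skeleton as the paper's argument---center $F$ so that $\mathbb{E}_{P_1}[F]=\mu/2$ and $\mathbb{E}_{P_2}[F]=-\mu/2$, apply Cauchy--Schwarz to the mean gap, then bound the resulting second moment by $\sigma^2 + \mu^2/4$ under each measure---it routes through a genuinely different intermediate divergence. The paper first invokes the data processing inequality to pass to the pushforwards $Q_1, Q_2$ of $F$, then applies Jensen to the bespoke convex function $f(x) = x\log x - \tfrac{(x-1)^2}{2(x+1)}$ to obtain $\infdiv{Q_1}{Q_2} \geq \tfrac{1}{2}\int \tfrac{(q_1-q_2)^2}{q_1+q_2}\,d\mu$ (a lower bound by half the triangular discrimination, also known as the Vincze--Le Cam distance), and then factorizes $x(q_1-q_2) = x\sqrt{q_1+q_2}\cdot\tfrac{q_1-q_2}{\sqrt{q_1+q_2}}$ in the Cauchy--Schwarz step. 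You instead work directly with the original densities $p_1, p_2$ (no data processing needed, since $F$ appears explicitly inside your integrals), invoke the textbook inequality $\infdiv{P_1}{P_2} \geq 2H^2(P_1,P_2)$ in place of the tailor-made convex function, and factorize $F(p_1-p_2) = F(\sqrt{p_1}+\sqrt{p_2})\cdot(\sqrt{p_1}-\sqrt{p_2})$. Your route pays a factor of $2$ in the crude bound $(\sqrt{p_1}+\sqrt{p_2})^2 \leq 2(p_1+p_2)$ but recovers it by coupling $2H^2$ (rather than $\tfrac{1}{2}\cdot\text{triangular discrimination}$) to the KL divergence, and the two approaches land on the identical constant $\mu^2/(4\sigma^2+\mu^2)$. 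The main advantage of your version is that the Hellinger--KL inequality is a standard, named result (derivable in one line from Jensen applied to the Bhattacharyya coefficient), making the argument more self-contained and recognizable; the paper's choice of $f$ is less familiar, though equally elementary, and its triangular discrimination bound is in fact the stronger intermediate inequality since $\tfrac{1}{2}\int\tfrac{(p_1-p_2)^2}{p_1+p_2} \geq 2H^2$ pointwise.
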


\begin{proof} By replacing $F$ by $F + \lambda$ for a suitably chosen constant $\lambda$, we may assume that $\mathbb{E}_{P_1}[F(X)] = \mu/2$ and $\mathbb{E}_{P_2}[F(X)] = -\mu/2$. Let $Q_1$ and $Q_2$ denote the corresponding probability distributions of $F(X)$ when $X$ is distributed according to $P_1$ and $P_2$ respectively. By the data processing inequality, it suffices to prove the claimed bound for $\infdiv{Q_1}{Q_2}.$ We further assume that $Q_1$ is absolutely continuous with respect to $Q_2$ (otherwise the bound is trivial).

As the quantities involved in
(\ref{AppendixC9}) arise from taking the expectation of random variables, our approach to establish the inequality will be to pass to the convex function $f : [0,+\infty) \to \R$ defined by
\[f(x) := x\log x - \frac{(x-1)^2}{2(x+1)}\]
and then apply Jensen's inequality. This yields
\[\mathbb{E}_{Q_2}\Bigg[f\left(\frac{d Q_1}{dQ_2}\right)\Bigg] \geq f\Bigg(\mathbb{E}_{Q_2}\bigg[\frac{d Q_1}{d Q_2}\bigg]\Bigg) = f(1) = 0.\]
Let $\mu$ be a dominating measure (i.e. $Q_1 \ll \mu$ and $Q_2 \ll \mu$) and let $q_1$ and $q_2$ denote the densities of $Q_1$ and $Q_2$ with respect to $\mu$. The previous calculation implies that
\[\infdiv{Q_1}{Q_2} = \mathbb{E}_{Q_2} 
\bigg[\frac{dQ_1}{dQ_2} \log \frac{dQ_1}{dQ_2}\bigg]\geq \frac{1}{2} \int_\R 
\frac{(q_1(x) - q_2(x))^2}{(q_1(x) + q_2(x))}\ d\mu(x).\]
By the Cauchy-Schwarz inequality,
\begin{align*}
\mu^2 &= \Bigg(\int_\R x \big(q_1(x) - q_2(x)\big)\ d\mu(x)\Bigg)^2 \\
&\leq \int_\R x^2\big(q_1(x) + q_2(x)\big)
\ d\mu(x) \int_\R \frac{(q_1(x) - q_2(x))^2}{q_1(x) + q_2(x)}\ d\mu(x) \\
&= (2\sigma^2 + \mu^2/2) \int_\R \frac{(q_1(x) - q_2(x))^2}{q_1(x) + q_2(x)}\ d\mu(x).
\end{align*}
Hence 
\[\infdiv{Q_1}{Q_2} \geq \frac{\mu^2}{4\sigma^4 + \mu^2}\]
as desired.
\end{proof}

With the above lemma, our strategy for establishing lower bounds for the KL divergence will be to lower bound the quantity $\delta$ and upper bound the  variances of both $T_k(X)$ and $T_k(Y)$. To control $\delta$, we will apply Lemma \ref{AppendixC1}. On the other hand, to control the variances, we will use its Hermite decomposition as a gateway to bring in heavy machinery from harmonic analysis. The following lemma remains unchanged from Lemma B.13 of the BRW paper.
\begin{lemma}\label{AppendixC10}
Fix a positive integer $k$. Let $\zeta\in\R^d$ and suppose that $Y\sim P_\zeta$. Then for any symmetric tensors $S_1, \cdots, S_k$, where $S_m \in (\R^d)^m$, we have
\[\text{var}\left(\sum_{m=1}^k \frac{\langle S_m, H_m(Y) \rangle}{(\sqrt{3}\sigma)^{2m} m!}\right) \leq e^{\frac{\norm{\zeta}^2}{2\sigma^2}} \sum_{k=1}^k \frac{\norm{S_m}^2}{(\sqrt{3}\sigma)^{2m} m!}.\]
\end{lemma}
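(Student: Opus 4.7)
The plan is to condition on the random rotation $G$, write $Y = \mu + \sigma\xi$ with $\mu := G\zeta$, and diagonalize the conditional second moment of $F(Y) := \sum_{m=1}^{k} c_m \langle S_m, H_m(Y)\rangle$ (where $c_m = ((\sqrt{3}\sigma)^{2m}m!)^{-1}$) by expanding the shifted Hermite tensors $H_m(\mu + \sigma\xi)$ as a linear combination of Hermite tensors $H_j(\sigma\xi)$ in the Gaussian noise, to which the orthogonality relation (\ref{AppendixC7}) applies. A weighted Cauchy--Schwarz step then reorganizes the sums so that both the exponential factor $e^{\norm{\zeta}^2/(2\sigma^2)}$ and the $(\sqrt{3}\sigma)^{2m}$ normalization fall out simultaneously, after which the bound $\text{var}(F) \leq \mathbb{E}[F^2]$ delivers the result.

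The first step is the tensorial Hermite shift formula
\[
H_m(\mu + \sigma\xi) \;=\; \sum_{j=0}^{m}\binom{m}{j}\,\text{Sym}\!\left(\mu^{\otimes(m-j)} \otimes H_j(\sigma\xi)\right),
\]
which follows coordinate-wise from the scalar identity $H_m(y+z) = \sum_{j}\binom{m}{j}H_j(y)z^{m-j}$. Because each $S_m$ is symmetric, the pairing $\langle S_m, \cdot\rangle$ absorbs the symmetrizer and gives
\[
F(\mu+\sigma\xi) \;=\; \sum_{j=0}^{k}\langle U_j(\mu), H_j(\sigma\xi)\rangle, \qquad U_j(\mu) := \sum_{m=j}^{k} c_m\binom{m}{j}\,S_m[\mu^{(m-j)}],
\]
where $S_m[\mu^{(m-j)}] \in (\R^d)^{\otimes j}$ denotes the (symmetric) tensor obtained by contracting $m-j$ indices of $S_m$ against $\mu$. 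The orthogonality relation (\ref{AppendixC7}), applied with $\sigma\xi \sim \mathcal{N}(\bs{0},\sigma^2 \bs{I}_d)$, then yields the clean identity
\[
\mathbb{E}_\xi\!\left[F(\mu+\sigma\xi)^2\right] \;=\; \sum_{j=0}^{k}\sigma^{2j}\,j!\,\normnorm{U_j(\mu)}^2.
\]

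Next I would bound each $\normnorm{U_j(\mu)}^2$ via weighted Cauchy--Schwarz with weights $a_m := (\normnorm{\mu}^2/(2\sigma^2))^{m-j}/(m-j)!$, which satisfy $\sum_{m \geq j}a_m \leq e^{\normnorm{\mu}^2/(2\sigma^2)}$. Combined with the elementary tensor inequality $\normnorm{S_m[\mu^{(m-j)}]} \leq \normnorm{\mu}^{m-j}\normnorm{S_m}$ this yields
\[
\normnorm{U_j(\mu)}^2 \;\leq\; e^{\normnorm{\mu}^2/(2\sigma^2)}\sum_{m=j}^{k} c_m^2\binom{m}{j}^2(m-j)!\,(2\sigma^2)^{m-j}\,\normnorm{S_m}^2.
\]
Substituting into the second-moment formula, swapping the order of summation in $j$ and $m$, and invoking the identities $\binom{m}{j}^2 j!(m-j)! = m!\binom{m}{j}$ and $\sum_{j=0}^m\binom{m}{j}\sigma^{2j}(2\sigma^2)^{m-j} = (3\sigma^2)^m$, the inner sum in $j$ collapses to $m!(3\sigma^2)^m$; together with $c_m^2 \, m!\,(3\sigma^2)^m = ((\sqrt{3}\sigma)^{2m}m!)^{-1}$ this produces exactly $\normnorm{S_m}^2/((\sqrt{3}\sigma)^{2m}m!)$. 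Taking expectation over $G$ leaves the estimate unchanged since $\normnorm{G\zeta}=\normnorm{\zeta}$, and $\text{var}(F) \leq \mathbb{E}[F^2]$ closes the argument.

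The main obstacle is calibrating the Cauchy--Schwarz weight parameter: the ratio $\normnorm{\mu}^2/(2\sigma^2)$ is the unique choice that simultaneously yields the target exponential factor $e^{\norm{\zeta}^2/(2\sigma^2)}$ and, via the binomial identity $\sigma^2 + 2\sigma^2 = 3\sigma^2$, contributes precisely the $3^m$ boost needed for the residual $c_m^2 \,(3\sigma^2)^m\, m! = 1/((\sqrt{3}\sigma)^{2m} m!)$ to match the target denominator. Any other weighting introduces either an inflated exponential prefactor or a mismatched $3^m$-type factor. The remaining combinatorial bookkeeping with the symmetrization operator and the contractions $S_m[\mu^{(m-j)}]$ is routine once one uses the invariance $\langle S_m, T\rangle = \langle S_m, \text{Sym}(T)\rangle$ for symmetric $S_m$.
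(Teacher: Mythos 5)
Your argument is correct and genuinely different from the paper's. The paper bounds $\mathrm{var}(F(Y)) \le \mathbb{E}[F(Y)^2]$, then applies Cauchy--Schwarz to change measure to the centered distribution $P_0$ (producing the $\chi^2$-type factor $\int f_\zeta^2/f_0 \le e^{\norm{\zeta}^2/\sigma^2}$, whose square root gives $e^{\norm{\zeta}^2/(2\sigma^2)}$), and then invokes Gaussian hypercontractivity $\norm{U_{1/\sqrt 3}\tilde F}_{L^4} \le \norm{\tilde F}_{L^2}$ to convert the resulting fourth moment into the desired normalized sum of $\norm{S_m}^2$. You instead condition on $G$, use the Appell binomial identity $h_n(x+y) = \sum_j \binom{n}{j} h_j(x) y^{n-j}$ coordinate-wise to derive the tensorial shift formula $H_m(\mu+\sigma\xi) = \sum_j \binom{m}{j}\,\mathrm{Sym}(\mu^{\otimes(m-j)} \otimes H_j(\sigma\xi))$, obtain an exact conditional second moment by orthogonality, and then apply a weighted Cauchy--Schwarz with weights $(\norm{\mu}^2/(2\sigma^2))^{m-j}/(m-j)!$, after which the identity $\sum_j \binom{m}{j}\sigma^{2j}(2\sigma^2)^{m-j} = (3\sigma^2)^m$ collapses everything to the stated form; the $G$-average is free since the bound depends only on $\norm{G\zeta} = \norm{\zeta}$. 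I checked the combinatorics — the tensorial shift formula (using $\binom{m}{j}\,\mathrm{Sym}(T)_{i_1\dots i_m} = \sum_{\beta \le \alpha,\,|\beta|=j} \binom{\alpha}{\beta}\mu^{\alpha-\beta}H_\beta(\sigma\xi)$), the contraction inequality $\normnorm{S_m[\mu^{(m-j)}]}\le\norm{\mu}^{m-j}\norm{S_m}$, the sum-swap, and the final reduction $c_m^2\, m!\,(3\sigma^2)^m = c_m$ — all go through. What your route buys is elimination of the hypercontractivity black box in favor of elementary Hermite combinatorics, at the cost of somewhat heavier bookkeeping; it is also illuminating in that it exhibits the $\sqrt 3$ scaling as arising from $\sigma^2 + 2\sigma^2 = 3\sigma^2$ rather than from the hypercontractive exponent $\sqrt{(p-1)/(q-1)}$ with $(p,q)=(2,4)$. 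One small remark: the degenerate case $\zeta = 0$ (where the Cauchy--Schwarz weights vanish) needs a one-line separate treatment, but there $U_j$ has only the $m=j$ term and the bound follows trivially.
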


\begin{proof} Let $F(x) = \sum_{m=1}^k \frac{\langle S_m, H_m(x) \rangle}{(\sqrt{3}\sigma)^{2m}m!}$. To upper bound the variance, it suffices to upper bound the second moment $\mathbb{E}[F(Y)^2]$. Before we are able to bring in results from the theory of Gaussian spaces, we first need to replace $P_\zeta$ with the centered multivariate normal distribution $Z\sim P_0 = \mathcal{N}(\bs 0, \sigma^2\bs I_d).$ To that end, we apply the Cauchy-Schwartz inequality to obtain
\begin{align}
\mathbb{E}[F(Y)^2] &= \int_{\R^d} f_\zeta(x) F(x)^2\ dx \nonumber\\[1mm]
&\leq  \left(\int_{\R^d} f_0(x) F(x)^4 \ dx\right)^{1/2}  \left(\int_{\R^d} \frac{f_\zeta(x)^2}{f_0(x)}\ dx\right)^{1/2} \nonumber \\[1mm]
&= \mathbb{E}\big[F(Z)^4\big]^{1/2} \left(\int_{\R^d} \frac{f_\zeta(x)^2}{f_0(x)}\ dx\right)^{1/2}. \label{AppendixC11}
\end{align}
We first address the second term. This is done by proceeding in a similar fashion as in the proof of the upper bound of the modified Theorem 9. Observe that
\begin{align*}\int_{\R^d} \frac{f_\zeta(x)^2}{f_0(x)}\ dx  
&= 
\frac{1}{\sigma^d (2\pi)^{d/2}}\int_{\R^d} 
\frac{\mathbb{E}\big[\exp(-\frac{1}{2\sigma^2}(\norm{x}^2 - 2x^T G\zeta + \norm{\zeta}^2))\big]^2}{\exp(-\frac{1}{2\sigma^2}\norm{x}^2)}\ 
dx.
\end{align*}
By applying Jensen's inequality and then completing the square afterwards, we obtain
\begin{align}
&\ \ \ \  \int_{\R^d} \frac{f_\zeta(x)^2}{f_0(x)}\ dx \nonumber\\[1mm]  &\leq 
\mathbb{E}\Bigg[\frac{1}{\sigma^d (2\pi)^{d/2}}\int_{\R^d} 
\frac{\exp\big(-\frac{1}{\sigma^2}(\norm{x}^2 - 2x^T G\zeta + \norm{\zeta}^2)\big)}{\exp(-\frac{1}{2\sigma^2}\norm{x}^2)}\ 
dy.\Bigg] \nonumber\\[1mm]
 &=
\mathbb{E}\Bigg[\frac{1}{\sigma^d (2\pi)^{d/2}}\int_{\R^d} 
\exp\left(-\frac{1}{2\sigma^2} \norm{x}^2 + \frac{2}{\sigma^2} x^T G - \frac{2}{\sigma^2} \norm{\zeta}^2\right)\ dy \cdot \exp\bigg(\frac{1}{\sigma^2}\norm{\zeta}^2\bigg)\Bigg] \nonumber\\[1mm]
&= \exp\bigg(\frac{1}{\sigma^2}\norm{\zeta}^2\bigg) \label{AppendixC12}.
\end{align}
We now come to the crux of the matter, which is to establish an upper bound on the first term $(\mathbb{E}[F(Z)^4])^{1/2}$. To accomplish that goal, we bring in some standard results about the \itbf{Ornstein-Uhlenbeck semigroup}, which is a family of operators $U_\rho : L^2(\R^d, \gamma^{\otimes d}) \to L^2(\R^d, \gamma^{\otimes d})$ defined by
\[U_\rho(f)(z) := \underset{Z' \sim \mathcal{N}(\bs 0,\sigma^2 \bs{I}_d)}{\mathbb{E}} \Big[ f\big(\rho z + \sqrt{1-\rho^2}\cdot Z'\big)\Big],\ \ \ \ \ \ \rho \in [-1,1].\]
Here, we highlight that our definition of $U_\rho$ differs from the standard definition in the literature in the sense that expectation is taken with respect to a multivariate normal distribution with covariance matrix $\sigma^2 \bs{I}_d$ as opposed to $\bs{I}_d$ to compensate for the scaling in $(\ref{AppendixC5}).$

The set $\{H_\alpha\ : \ \alpha\in \mathbb{N}^d_0\}$ is an eigenbasis for the family $(U_\rho)$, with
$U_\rho(H_\alpha) = \rho^{|\alpha|} H_\alpha$ 
\cite[Proposition 11.37]{AnalysisOfBooleanFunctions}. By viewing $\langle S_m, H_m(x) \rangle$ as a polynomial in $x$, we get
\begin{align*}
U_\rho\big(\langle S_m, H_m(x) \rangle\big) &= U_\rho \left(\sum_{1\leq i_1 ,\cdots ,i_m \leq d} (S_m)_{i_1\cdots i_m} (H_m)_{i_1\cdots i_m}\right) 
\\[1mm] &=  \sum_{1\leq i_1 ,\cdots, i_m \leq d} (S_m)_{i_1\cdots i_m} U_\rho\big(H_{\alpha^{(i_1\cdots i_m)}}\big) \\[1mm]
&= \rho^m\sum_{1\leq i_1 ,\cdots ,i_m \leq d} (S_m)_{i_1\cdots i_m} 
H_{\alpha^{(i_1\cdots i_m)}} = \rho^m \langle S_m, H_m(x) \rangle,
\end{align*}
where we have used the fact that $|\alpha^{(i_1\cdots i_m)}| = m$. Thus if we define the degree $k$ polynomial
\[\widetilde{F}(x) := \sum_{m=1}^k \frac{\langle S_m , H_m(x) \rangle}{(\sqrt{3})^m \sigma^{2m} m!},\] then $U_{1/\sqrt{3}}(\widetilde{F}) = F.$ 
Next, we will invoke the Gaussian hypercontractivity theorem:
\begin{theorem}\cite[Theorem 11.23]{AnalysisOfBooleanFunctions}\label{AppendixC13} Let $1\leq p \leq q \leq \infty$ 
and let $f\in L^p(\R^n,\gamma)$, where $\gamma$ is the standard Gaussian measure on $\R^n$. Then $\norm{U_{\rho} f}_q \leq \norm{f}_p$ for $0 \leq \rho \leq \sqrt{\frac{p-1}{q-1}}$.
\end{theorem}
In our current context, the Gaussian hypercontractivity theorem implies that
\[\mathbb{E}\big[F(Z)^4\big]^{1/2} \leq \mathbb{E}\big[\widetilde{F}(Z)^2\big].\]
It remains to compute $\mathbb{E}[\widetilde{F}(Z)^2]$. Due to the orthogonality relations in 
(\ref{AppendixC7}), when we expand the square, most of the terms will vanish. Since $S_m$ and $H_m(x)$ are both symmetric tensors, for any tuple $(i_1,\cdots,i_m) \in [d]^m$, the quantities $(S_m)_{i_1\cdots i_m}$ and 
$(H_m(x))_{i_1\cdots i_m}$ depend only on the multi-set $\{i_1,\cdots,i_m\}.$ Thus for each $\alpha \in \mathbb{N}^d_0$ such that $|\alpha | = m$, if we define \[S_\alpha := (S_m)_{i_1\cdots i_m}\]
where $(i_1,\cdots,i_m)$ is any $m$-tuple satisfying $\alpha^{(i_1\cdots i_m)} = \alpha$, we have that
\[\langle S_m , H_m(x) \rangle = \sum_{\substack{|\alpha| = m}} \frac{m!}{\alpha!} S_\alpha H_\alpha(x). \]
Applying the orthogonality relations in (\ref{AppendixC7}), we obtain
\begin{align}\mathbb{E}\big[F(Z)^4\big]^{1/2} \leq \mathbb{E}\big[\widetilde{F}(Z)^2\big] &= \sum_{m=1}^k 
\mathbb{E}\Bigg[ \bigg(\frac{1}{(\sqrt{3})^m \sigma^{2m} m!} \sum_{\substack{ |\alpha| = m}} \frac{m!}{\alpha!} S_\alpha H_\alpha(Z) \bigg)^2 \Bigg] \nonumber\\ &= \sum_{m=1}^k \frac{1}{3^m \sigma^{4m}}
\sum_{\substack{ |\alpha| = m}} \frac{S_\alpha^2}{\alpha!^2}\cdot 
\mathbb{E}\big[H_\alpha(Z)^2\big] \nonumber\\
&= \sum_{m=1}^k \frac{1}{(\sqrt{3} \sigma)^{2m} m!}
\sum_{\substack{ |\alpha| = m}} \frac{m!}{\alpha!} S_\alpha^2 \nonumber\\
&= \sum_{m=1}^k \frac{\norm{S_m}^2}{(\sqrt{3} \sigma)^{2m} m!}. \label{AppendixC14}
\end{align}
Plugging in (\ref{AppendixC12}) and (\ref{AppendixC14}) back into (\ref{AppendixC11}) gives the desired conclusion. 
\end{proof}
We now conclude our discussion in this section by putting together everything that was introduced. 
Note that while the constants in our proof differ from the original proof given in \cite[Theorem 9]{BRW}, 
the spirit of the proof remains unchanged.

\begin{proof}   As in the proof of the upper bound, we assume $\rho(\theta,\phi) = \norm{\theta - \phi},\ \norm{\theta} = 1$ and $\sigma \geq 1$. By 
Lemma \ref{AppendixC1}, we have
\[\delta =  \sum_{m=1}^k \frac{\norm{\Delta_m(\theta,\phi)}^2}{(\sqrt{3}\sigma)^{2m} m!} 
\leq 12 \rho(\theta,\phi)^2\sum_{m=0}^\infty \frac{18^m \cdot K_0^{2m}}{3^mm!} \leq 12 K_0^2 e^{6K_0^2}.\]
By Lemma \ref{AppendixC10}, since $\norm{\theta}^2 = 1$ and $\norm{\phi}^2 \leq (K_0 + 1)^2 \leq  4K_0^2$, the variances of 
$T_k(X)$ and $T_k(Y)$ are bounded above by $e^{\frac{2K_0^2}{\sigma^2}}\delta$. 
Now since $4e^{\frac{2K_0^2}{\sigma^2}} \leq 12 K_0^2 e^{6K_0^2}$
applying Lemma \ref{AppendixC8} then gives
\[\infdiv{P_\theta}{P_\phi} \geq \frac{\delta^2}{4e^{\frac{2K_0^2}{\sigma^2}}\delta + \delta^2} \geq \frac{\delta^2}{24K_0^2 e^{6K_0^2} \delta}= 
\frac{1}{24}K_0^{-2} e^{-6K_0^2} \sum_{m=1}^k \frac{\norm{\Delta_m(\theta,\phi)}^2}{(\sqrt{3}\sigma)^{2m} m!}.\]
Finally, as the summands are nonnegative, letting $k\to\infty$ gives the desired result.
\end{proof}

\end{document}